\newcommand{\N}{\mathbb{N}}
\newcommand{\I}{\mathbb{I}}
\newcommand{\F}{\mathbb{F}}
\newcommand{\Drel}{\mathbin{\mathcal D}}
\def \RR{{\mathcal R}}
\def \HH{{\mathcal H}}
\def \LL{{\mathcal L}}
\def \DD{{\mathcal D}}
\def \JJ{{\mathcal J}}
\newtheorem{theorem}{Theorem}[section]
\newtheorem{corollary}[theorem]{Corollary}
\newtheorem{lemma}[theorem]{Lemma}
\newtheorem{proposition}[theorem]{proposition}
\newtheorem{obs}[theorem]{Remark}
\newtheorem{proof}[theorem]{Proof}
\title{On Ideals of a Skew Lattice}
\author{\textit{Jo\~ao Pita Costa}\bigskip\\
University of Ljubljana,\\
Faculty of Mathematics and Physics,\\
Jadranska 19, 1000 Ljubljana, Slovenia.\\
joaopitacosta@gmail.com
\footnote{The author thanks the support
of Funda\c c\~ao para a Ci\'encia e Tecnologia with the reference SFRH / BD / 36694
/ 2007.}
}
\date{\today}
\begin{document}

\maketitle

\begin{abstract}
Ideals are one of the main topics of interest to the study of the order structure of an algebra. Due to their nice properties, ideals have an important role both in lattice theory and semigroup theory.
Two natural concepts of ideal can be derived, respectively, from the two concepts of order that arise in the context of skew lattices. 
The correspondence between the ideals of a skew lattice, derived from the preorder, and the ideals of its respective lattice image is clear. Though, skew ideals, derived from the partial order, seem to be closer to the specific nature of skew lattices. 
In this paper we review ideals in skew lattices and discuss the intersection of this with the study of the coset structure of a skew lattice.  
\end{abstract}

\section{Introduction}

As skew lattices are a generalization of lattices, the order structure has an important role in the study of these algebras. 
Skew lattices can be seen as double regular bands where two different order concepts can be defined:  the natural preorder, denoted by $\preceq $, and the natural partial order, denoted by $\leq $, one weaker then the other and both of them motivated by analogous order concepts defined for bands. They generalize the partial order of the correspondent lattice. Both of these ideals are frequent in the literature.  
Though, unlike lattices, the admissible Hasse diagram representing the order structure of a skew lattice can not determine its algebraic structure. The study of the coset structure of skew lattices began with Leech in \cite{Le93} mentioned as \textit{(global) coset geometry}. It derives from Leech's first decomposition theorem and gives a perspective into the role of the partitions that $\DD$-classes determine on each other providing important additional information. Several varieties of skew lattices were characterized using laws involving only cosets (cf. \cite{Co09a}, \cite{Co11} and \cite{JPC12}). 

Two kinds of ideals can be naturally derived from these preorders. The strong concept of ideal is naturally derived from the preorder and has been largely studied having an important role in the research centered on the congruences of skew Boolean algebras with intersections, a particular case of the Boolean version of a skew lattice (cf. \cite{Le96} and \cite{BL}). The skew ideals, on the other hand, tend to describe the \textit{skew nature} of skew lattices, having a clear relation with the natural preorder and with the concept of normality. 
They were not so well explored in the literature and are still a \textit{fresh} subject of research. 
Our motivation is due to the important role of ideals in the study of skew Boolean algebras. They reveal their usefulness on the recent generalization of the \textit{Stone duality} for skew lattices in \cite{Ba11} where the authors denoted these two versions of ideals as $\leq$-\textit{ideals} and $\preceq$-\textit{ideals}. In this paper we will name them \textit{skew ideals} and \textit{ideals}, respectively.
In this work we study both kinds of ideals in skew lattices and the intersections of this theory with the coset structure of a skew lattice. 

The reader that is not familiar with the theory of skew lattices can find in \cite{Le96}, \cite{Le93} and \cite{Le90} a relevant review on the subject.
For further reading on the order structure of semigroups and lattices, two important motivations to the results in the following discussion, the author suggests \cite{Ho76}, \cite{Ba40} and \cite{Gr71}.

\section{Preliminaries}

A \emph{skew lattice} $\mathbf{S}$ is a set S equipped with two associative binary operations $\vee$  and $\wedge$, called \emph{meet} and \emph{join}, respectively. Skew lattices satisfy the absorption laws $$(y\wedge x)\vee x = x = x\vee (x\wedge y)$$ and their duals. Both $\wedge$ and $\vee$ are idempotent. If we would admit all the possible absorption laws we would get the commutativity of the operations and, thus, lattices. Indeed, the commutativity of one operation implies the commutativity of the other.
Given that $\vee$ and $\wedge$ are associative, idempotent operations, skew lattices are characterized by the following absorption dualities: $x\vee y=x$ iff $x\wedge y = y$ and $x\vee y = y$ iff $x\wedge y=x$ (cf. \cite{Le89}). Green's relations are five equivalence relations characterizing the elements of a semigroup in terms of the principal ideals they generate.
If $\mathbf{S}$ is a band, the operation $\DD $ equals $\JJ$ and $ \HH$ is the trivial equivalence.  
Due to this, the Green's relations can be simplified for bands as : $x\RR y \text{  iff  } xy=y \text{ and } yx=x$; $x\LL y \text{  iff  } xy=x \text{ and } yx=y$; $x\DD y \text{  iff  } xyx=x \text{ and } yxy=y$. 
Moreover, due to the absorption dualities, the Green's relations in the context of skew lattices are defined in \cite{Le89} by: $\RR=\RR_{\wedge}=\LL_{\vee}$;  $\LL=\LL _{\wedge}=\RR_{\vee}$;  $\DD =\DD _{\wedge}=\DD_{\vee}$. 
\emph{Right-handed} skew lattices are the skew lattices for which $\RR =\DD$ while left-handed skew lattices are determined by $\LL =\DD$. Thus, right-handed skew lattices are the ones satisfying the identity $x\wedge y\wedge x = y\wedge x$ or, equivalently, $x\vee y\vee x = x\vee y$. Left-handed skew lattices can be defined by similar identities: $x\wedge y\wedge x = x\wedge y$ or, equivalently, $x\vee y\vee x = y\vee x$.

Given nonempty sets $L$ and $R$ the direct product $L\times R$ together with the operations $(x,y)\vee (x',y') = (x',y)$ and $(x,y)\wedge (x',y') = (x,y')$ forms a skew lattice. A \emph{rectangular skew lattice} is an  isomorphic copy of this skew lattice. Leech's First Decomposition Theorem states that the Green's relation $\DD$ is a congruence on any skew lattice $\mathbf S$, while $\DD$-equivalence classes are exactly the maximal rectangular subalgebras of $\mathbf S$ and $\mathbf S/\DD$ is the maximal lattice image of $\mathbf S$.
Moreover, a skew lattice $\mathbf S$ may be viewed as the conjugation of its pair of regular band reducts $(S,\wedge )$ and $(S,\vee )$ whose binary operations dualize each other as described, that is, $x\wedge y= y\vee x$ holds for every $x,y\in S$. Leech's Second Decomposition Theorem states that both reducts $(S;\wedge)$ and $(S;\vee)$ are regular bands and that every skew lattice factors as the fibred product of a right-handed skew lattice with a left-handed skew lattice over their common maximal lattice image $\mathbf{S}/\DD $, with both factors being unique up to isomorphism (cf. \cite{Le89}).

A \emph{primitive} skew lattice is a skew lattice composed by two comparable $\DD $-classes $A$ and $B$ such that $A>B$, denoted by $\set{A>B}$. A \emph{skew diamond} is a skew lattice composed by two incomparable $\DD $-classes, $A$ and $B$, a join class $J=A\vee B$ and a meet class $M=A\wedge B$. It is usually denoted by $\set{J>A,B>M}$. Moreover, $\set{J>A}$, $\set{J>B}$, $\set{A>M}$ and $\set{B>M}$ are primitive skew lattices. Recall that a \emph{chain} (or \emph{totally ordered set}) is a set where each two elements are (order) related, and an \emph{antichain} is a set where no two elements are (order) related. We call $\mathbf{S}$ a \emph{skew chain} whenever $\mathbf S/\DD $ is a chain. 
Any sub lattice $\mathbf{T}$ of $\mathbf{S}$ intersects each $\DD $-class of $\mathbf{S}$ in at most one point. If $\mathbf{T}$ meets each $D$-class of $\mathbf{S}$ in exactly one point, then $\mathbf{S}$ is called a \emph{lattice section} of $\mathbf{S}$. As such, it is a maximal sub lattice that is also an internal copy inside $\mathbf{S}$ of the maximal lattice image $\mathbf{S}/\DD $ (cf. \cite{Le93}). Each lattice section of $\mathbf{S}$ is isomorphic to $\mathbf{S}/\DD $. 

A skew lattice is said to be \emph{symmetric} if it is biconditionally commutative, that is if it satisfies: $x\wedge y= y\wedge x \text{   iff   } x\vee y= y\vee x.$
Most of the ``well behaved'' skew lattices considered are symmetric. All skew chains are symmetric (cf. \cite{Le92}). Another relevant property, extensively studied in skew lattices is \textit{normality}. 
A skew lattice $\mathbf{S}$ is said to be \emph{normal} if it satisfies $x\wedge y\wedge z\wedge w= x\wedge z\wedge y\wedge w$. Dually, skew lattices that satisfy $x\vee y\vee z\vee w= x\vee z\vee y\vee w$ are named \emph{conormal}. 
Skew lattices that are simultaneously normal and conormal, are called \emph{binormal}. 
Schein described in \cite{Sc83} binormal skew lattices as algebras that factor as the product of a lattice with a rectangular skew lattice.
When $\mathbf{S}/\DD$ is a distributive lattice we say that $\mathbf{S}$ is a \emph{quasi-distributive} skew lattice. 
If $\mathbf{S/\DD}$ is finite and quasi-distributive, then $\mathbf{S}$ has a lattice section which is naturally isomorphic to $\mathbf{S/\DD}$ (cf. \cite{Le86}). A skew lattice is \emph{distributive} if it satisfies 
$x\wedge(y\vee z)\wedge x= (x\wedge y\wedge x)\vee (x\wedge z\wedge x)$ and 
$x\vee (y\wedge z)\vee x= (x\vee y\vee x)\wedge (x\vee z\vee x)$.
All distributive skew lattices are quasi-distributive. Moreover, in the presence of normality, distributivity is equivalent to quasi-distributivity (cf. \cite{Le96}).

A \emph{skew Boolean algebra} is an algebra $\mathbf{S }= (S ; \vee, \wedge, \backslash, 0)$ of type $<2, 2, 2, 0>$ such that $(S ; \vee, \wedge, 0)$ is a distributive, normal and symmetric skew lattice with $0$, and $\backslash $ is a binary operation on $\mathbf{S}$ satisfying $(x \wedge y \wedge x) \vee (x\backslash y) = x$ and $(x \wedge y \wedge x) \wedge (x\backslash y) = 0$. Skew Boolean algebras form a variety of skew lattices (cf. \cite{Le90}).

\section{Order Structure}\label{Order Structure}

Influenced by the natural preorders defined for bands of semigroups in \cite{Ho76}, we define in a skew lattice $\mathbf{S}$ the following distinct generalizations for the lattice order:
\begin{itemize}
\item[] the \emph{natural partial order} by $x\geq y$ if $x\wedge y=y=y\wedge x$ or dually, $x\vee y = x = y\vee x$;
\item[] the \emph{natural preorder} by $x\succeq y$ if $y\wedge x\wedge y = y$ or, dually, $x\vee y\vee x = x$.
\end{itemize} 

Observe that $x\Drel y$ iff $x\succeq y$ and $y\succeq x$. Furthermore, the following technical result will be useful for understanding the theorems ahead.

\begin{proposition}\cite{Le89}\label{list_order}
Let $\mathbf{S}$ be a skew lattice and $x,y,z\in S$. Then,
\begin{itemize}
\item[(i)] $x\wedge y\preceq x,y$ and $x,y\preceq x\vee y$;
\item[(ii)] $x\wedge y\wedge x\leq x\leq x\vee y\vee x$;
\item[(iii)] $x\wedge y\leq y\vee x$;
\end{itemize}
\end{proposition}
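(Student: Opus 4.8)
The plan is to derive all three items directly from associativity, idempotency and the absorption laws, exploiting the crucial freedom that each of $\preceq$ and $\leq$ admits two equivalent descriptions (a $\wedge$-version and a $\vee$-version), so that for a given inequality one can always pick the version in which an absorption law applies on the nose.

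I would dispose of (ii) first, since it needs nothing beyond associativity and idempotency: to obtain $x\wedge y\wedge x\leq x$ one checks the two defining equalities $x\wedge(x\wedge y\wedge x)=x\wedge y\wedge x$ and $(x\wedge y\wedge x)\wedge x=x\wedge y\wedge x$, and each collapses at once after re-associating and using idempotency of $x$; the relation $x\leq x\vee y\vee x$ is the literal dual.

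For (i) I would split into the four statements and choose the convenient description of $\preceq$ in each case. The relations $x\wedge y\preceq x$ and $x\preceq x\vee y$ come for free: $x\wedge y\preceq x$ reads $x\vee(x\wedge y)\vee x=x$, which is $x\vee(x\wedge y)=x$ followed by idempotency, and $x\preceq x\vee y$ reads $x\wedge(x\vee y)\wedge x=x$, which is $x\wedge(x\vee y)=x$ followed by idempotency. For the remaining two I would use the ``transposed'' instances $(x\wedge y)\vee y=y$ and $(x\vee y)\wedge y=y$ of the absorption laws: then $x\wedge y\preceq y$ reads $y\vee(x\wedge y)\vee y=y$, which after re-associating is $y\vee\bigl((x\wedge y)\vee y\bigr)=y\vee y=y$, and $y\preceq x\vee y$ reads $y\wedge(x\vee y)\wedge y=y$, i.e. $y\wedge\bigl((x\vee y)\wedge y\bigr)=y\wedge y=y$. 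Item (iii) is handled in the same spirit using the $\vee$-version of $\leq$: $x\wedge y\leq y\vee x$ amounts to $(x\wedge y)\vee(y\vee x)=y\vee x$ and $(y\vee x)\vee(x\wedge y)=y\vee x$, and these re-associate to $\bigl((x\wedge y)\vee y\bigr)\vee x=y\vee x$ and $y\vee\bigl(x\vee(x\wedge y)\bigr)=y\vee x$, each being a single absorption step.

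I do not expect a genuine obstacle here: no handedness assumption, no normality, and no Green's-relation input is required, and the whole argument is purely equational. The only point demanding care is bookkeeping — selecting the right one of the two forms of $\preceq$ (resp. $\leq$) together with the correct left-versus-right instance of an absorption law. The ``wrong'' choice typically strands one with an expression such as $x\wedge y\wedge x\wedge y$, which does simplify to $x\wedge y$ but only by invoking the regularity of the band reducts, a fact that sits downstream of what is being used here; the purpose of the proposal is precisely to sidestep that detour.
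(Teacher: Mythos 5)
The paper offers no proof of this proposition---it is quoted from Leech's \emph{Skew lattices in rings} \cite{Le89}---so there is nothing internal to compare against; judged on its own, your equational argument is correct and complete. Each of the verifications checks out: (ii) needs only associativity and idempotency, and for (i) and (iii) your device of choosing, for each inequality, the form of $\preceq$ (resp.\ $\leq$) and the left/right instance of absorption that matches on the nose (e.g.\ $x\wedge y\preceq y$ as $y\vee\bigl((x\wedge y)\vee y\bigr)=y$) does indeed avoid any appeal to regularity, which is exactly the standard route.
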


It is well known the relation between the \textit{algebraic structure} and the \textit{order structure} of a lattice. Analogously to what occurs in lattice theory, the partial order defined in a skew lattice has an important role in the study of these algebras (cf. \cite{Jo61}, \cite{Co80} or \cite{Le90}). Though, the natural partial order doesn't always determine a skew lattice, as we shall see. 
The following characterization of the natural partial order for skew lattices is an easy but useful observation.

\begin{lemma}\cite{AAA80}\label{order_id} 
Let $\mathbf{S}$ be a skew lattice and $x,y\in S$. Then $x\geq y$ iff $y=x\wedge y\wedge x$ or, dually, $x=y\vee x\vee y$. 
\end{lemma}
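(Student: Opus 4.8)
The plan is to prove both directions of the biconditional directly from the definition of the natural partial order, using only associativity, idempotency and the absorption dualities recalled in the Preliminaries. Recall that $x \geq y$ means $x \wedge y = y = y \wedge x$ (equivalently $x \vee y = x = y \vee x$), and we want to reduce this to the single identity $y = x \wedge y \wedge x$ (dually $x = y \vee x \vee y$).

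First I would handle the easy direction: assume $x \geq y$. Then $x \wedge y = y$ and $y \wedge x = y$, so $x \wedge y \wedge x = (x \wedge y) \wedge x = y \wedge x = y$, which gives the claimed identity immediately. Dually, from $y \vee x = x$ and $x \vee y = x$ we get $y \vee x \vee y = (y \vee x) \vee y = x \vee y = x$. So this direction is just one substitution each and needs nothing beyond associativity.

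The converse is the step that carries the real content. Assume $y = x \wedge y \wedge x$; I must recover all four equalities $x \wedge y = y$, $y \wedge x = y$, $x \vee y = x$, $y \vee x = x$. The natural move is to compute $x \wedge y$ by substituting the hypothesis for $y$: $x \wedge y = x \wedge (x \wedge y \wedge x) = (x \wedge x) \wedge y \wedge x = x \wedge y \wedge x = y$, using idempotency of $\wedge$ and the hypothesis again; symmetrically $y \wedge x = (x \wedge y \wedge x) \wedge x = x \wedge y \wedge x = y$. That already yields $x \geq y$ by definition, and then the dual join equalities follow either by the absorption dualities ($x \wedge y = y \iff x \vee y = x$, and $y \wedge x = y \iff y \vee x = x$) or, if one starts instead from $x = y \vee x \vee y$, by the mirror-image computation. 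I expect the only mild subtlety — and the main thing to get right — is bookkeeping: making sure the two forms of the hypothesis ($\wedge$-form and $\vee$-form) are genuinely equivalent to each other via the absorption dualities, so that proving the lemma for one form suffices for both. This is routine given the characterization $x \vee y = x \iff x \wedge y = y$ stated in the Preliminaries, so there is no serious obstacle; the proof is essentially a short chain of substitutions.
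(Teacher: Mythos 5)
Your argument is correct and complete: the forward direction is a one-line substitution, and the converse follows from $x\wedge(x\wedge y\wedge x)=x\wedge y\wedge x=(x\wedge y\wedge x)\wedge x$ by idempotency and associativity, with the join identities recovered via the absorption dualities exactly as you indicate. The paper itself states this lemma as a citation to an earlier work and gives no proof, so there is nothing to compare against; your proof is the standard one and would serve as a correct self-contained justification.
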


Usually $\DD $ is referred in the available literature as the \emph{natural equivalence} although we know it to be a congruence in any skew lattice. A $\DD$-class containing an element $x$ shall be denoted by $\DD_x$. 

\begin{proposition}\cite{Le89}\label{preorder}
Let $\mathbf{S}$ be a skew lattice with $\DD $-classes $A>B$. 
For all $x,y\in S$, $x\leq y$ implies $x\preceq y$. 
Furthermore, whenever $x\in A$, $y\in B$, $$B\leq A\text{  iff  }x\preceq y.$$ 
\end{proposition}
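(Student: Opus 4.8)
The plan is to prove the two assertions separately; the first is essentially a definition chase, and the second is where the structure of $\DD$-classes does the work. Recall from Leech's First Decomposition Theorem (stated in the Preliminaries) that $\DD$ is a congruence, that $\mathbf S/\DD$ is the maximal lattice image, and that each $\DD$-class is an isomorphic copy of a rectangular skew lattice. For "$x\le y\Rightarrow x\preceq y$", which needs no hypothesis on $\DD$-classes, I would just unwind definitions: $x\le y$ gives in particular $x\wedge y=x$, hence $x\wedge y\wedge x=x\wedge x=x$, which is exactly the defining condition for $x\preceq y$ (one may equally read off the dual form $y\vee x\vee y=y$ from $y\vee x=y$). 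So $\le$ is always contained in $\preceq$.

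For the equivalence, fix $x\in A$ and $y\in B$; the content is that $B\le A$ in $\mathbf S/\DD$ is equivalent to the one-sided absorption $y\wedge x\wedge y=y$ (the instance of the natural preorder relating the element of the lower $\DD$-class to the element of the upper one). For one direction, apply the natural homomorphism $\mathbf S\to\mathbf S/\DD$ to the identity $y\wedge x\wedge y=y$: in the lattice $\mathbf S/\DD$ it becomes $B\wedge A\wedge B=B$, and commutativity and idempotency collapse the left side to $A\wedge B$, so $A\wedge B=B$, that is, $B\le A$. For the converse, assume $B\le A$, equivalently $A\wedge B=B$. Proposition \ref{list_order}(ii), with the two variables interchanged, gives $y\wedge x\wedge y\le y$; and applying the quotient homomorphism again, the $\DD$-class of $y\wedge x\wedge y$ equals $B\wedge A\wedge B=A\wedge B=B=\DD_y$, so $y\wedge x\wedge y$ and $y$ lie in the same $\DD$-class.

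It then remains to upgrade "$y\wedge x\wedge y\le y$ and $y\wedge x\wedge y \Drel y$" to "$y\wedge x\wedge y=y$", and this is the one genuinely structural step. Here I would use that the natural partial order is trivial inside a single $\DD$-class: such a class is an isomorphic copy of some $L\times R$ with $(a,b)\wedge(c,d)=(a,d)$, and from this formula one sees at once that $(a,b)\le(c,d)$ forces $a=c$ and $b=d$. Hence two $\DD$-equivalent elements, one lying below the other, must coincide, giving $y\wedge x\wedge y=y$ and closing the argument. I expect the only real pitfall to be bookkeeping: matching the orientation "$B\le A$" with the correctly oriented one-sided condition among the several equivalent descriptions of the natural preorder (and correspondingly the right instance of Proposition \ref{list_order}(ii)); once that is pinned down, both implications follow immediately from the pieces above.
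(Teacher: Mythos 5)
The paper offers no proof of this proposition to compare against: it is quoted from Leech's \emph{Skew lattices in rings} with only the citation \cite{Le89}. Judged on its own, your argument is correct and complete. The first part is indeed a definition chase ($x\wedge y=x$ gives $x\wedge y\wedge x=x$, which is $x\preceq y$), and for the equivalence you use exactly the right two ingredients: that $\DD$ is a congruence, so the one-sided absorption identity descends to $A\wedge B=B$ in the lattice $\mathbf S/\DD$ and conversely pins down the $\DD$-class of $y\wedge x\wedge y$; and that the natural partial order restricted to a single $\DD$-class is trivial, which you correctly re-derive from the rectangular structure $(a,b)\wedge(c,d)=(a,d)$ (this is Proposition \ref{order_dclass} of the paper, so you could also just have cited it). One remark on the ``bookkeeping'' you flagged: with the paper's conventions ($\preceq$ oriented so that $x\wedge y\preceq x$, and $A>B$ meaning $A$ is the upper class), the displayed conclusion ``$B\leq A$ iff $x\preceq y$'' for $x\in A$, $y\in B$ is misstated --- it should read $y\preceq x$, equivalently $y\wedge x\wedge y=y$. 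That is precisely the version you prove, so you have silently corrected a typo in the statement rather than introduced an error.
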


\begin{proposition}\cite{Le89}\label{up}   
Let $A$ and $B$ be comparable $\DD$-classes in a skew lattice $\mathbf S$ such that $A \geq B$. Then, for each $a \in A$, there exists $b \in B$ such that $a \geq b$, and dually, for each $b \in B$, there exists $a \in A$ such that $a \geq b$. 
\end{proposition}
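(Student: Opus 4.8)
The plan is to produce the required elements explicitly by ``conjugating'' an arbitrary element of the other $\DD$-class. Recall from Leech's First Decomposition Theorem that $\DD$ is a congruence and that $\mathbf{S}/\DD$ is the maximal lattice image of $\mathbf{S}$; since $A$ and $B$ are comparable with $A\geq B$, inside this lattice we have $A\wedge B=B$ and $A\vee B=A$. I will also use the characterization of the natural partial order from Lemma~\ref{order_id}: $x\geq y$ iff $y=x\wedge y\wedge x$, and dually iff $x=y\vee x\vee y$.

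For the first assertion, fix $a\in A$ and pick any $b'\in B$; set $b:=a\wedge b'\wedge a$. Because $\DD$ is a congruence, the $\DD$-class of $b$ is $A\wedge B\wedge A=B$, so $b\in B$. It then remains to check $a\geq b$, that is, $b=a\wedge b\wedge a$: using associativity and idempotency of $\wedge$ we get $a\wedge b\wedge a=a\wedge(a\wedge b'\wedge a)\wedge a=a\wedge b'\wedge a=b$, which is exactly the condition in Lemma~\ref{order_id}.

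The dual assertion is obtained by the order-dual construction. Given $b\in B$, choose any $a'\in A$ and set $a:=b\vee a'\vee b$. As before, the $\DD$-class of $a$ is $B\vee A\vee B=A$, so $a\in A$, and $b\vee a\vee b=b\vee(b\vee a'\vee b)\vee b=b\vee a'\vee b=a$, so by the dual half of Lemma~\ref{order_id} we conclude $a\geq b$.

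There is essentially no hard step here: the only things one must be careful about are that the $\DD$-class computations are legitimate (which is precisely the content of $\DD$ being a congruence on $\mathbf{S}$, so that $\DD_{a\wedge b'\wedge a}=\DD_a\wedge\DD_{b'}\wedge\DD_a$ in $\mathbf{S}/\DD$), and that one invokes the correct ($\wedge$- or $\vee$-) form of the characterization in Lemma~\ref{order_id} for each of the two halves. Everything else reduces to associativity and idempotency of the band reducts $(S;\wedge)$ and $(S;\vee)$.
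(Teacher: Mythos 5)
Your proof is correct. The paper itself states this proposition without proof (citing Leech), but your argument is the standard one: conjugation $b:=a\wedge b'\wedge a$ lands in $B=A\wedge B\wedge A$ because $\DD$ is a congruence, and the identity $a\wedge b\wedge a=b$ follows from idempotency, which is exactly the criterion of Lemma~\ref{order_id}; the dual half is handled dually. The only simplification available is that Proposition~\ref{list_order}(ii) already gives $a\wedge b'\wedge a\leq a$ directly, so the explicit verification could be replaced by a citation.
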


\begin{proposition}\cite{Le89}\label{comutdiam}     
Let $\set{J>A,B>M}$ be a skew diamond. 
Then, for every $a\in A$ there exists $b\in B$ such that $a\vee b=b\vee a$ in $J$ and $a\wedge b=b\wedge a$ in $M$.
Moreover, $$J=\{a\vee b \mid a\in A,  b\in B \text{ and }  a\vee b=b\vee a\}\text{ and } M=\set{a\wedge b \mid a\in A, b\in B \text{ and } a\wedge b=b\wedge a}.$$ 
\end{proposition}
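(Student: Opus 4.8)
The plan is to prove Proposition~\ref{comutdiam} (the skew diamond commutation result) in two stages: first establish the existence of a commuting partner $b\in B$ for a given $a\in A$, then use that existence statement to pin down the equalities $J=\{a\vee b\mid\dots\}$ and $M=\{a\wedge b\mid\dots\}$. For the first stage, fix $a\in A$. Since $J=\DD_a\vee\DD_b$ for any $b'\in B$, I would start with an arbitrary $c\in B$, form the join $a\vee c\in J$, and then use Leech's decomposition structure to ``correct'' $c$ to a genuine commuting partner. Concretely, set $b:=(c\vee a\vee c)\wedge c\wedge(c\vee a\vee c)$ or a similar conjugate designed to lie in $B$ (by Lemma~\ref{order_id} it lies below $c\vee a\vee c$, hence in $\DD_c=B$ since $c\vee a\vee c\,\DD\,c$), chosen so that $a$ and $b$ commute; the verification that $a\vee b=b\vee a$ and $a\wedge b=b\wedge a$ is a direct computation using associativity, idempotency, the absorption laws, and the primitive-skew-lattice identities relating $A$, $B$, $J$ and $M$. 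Alternatively, and perhaps more cleanly, I would invoke symmetry: restrict to the sub-skew-lattice generated by $a$ and $c$, observe it is a skew chain-like configuration or use that the relevant reducts are regular bands, and apply Proposition~\ref{up} together with the fact (stated in the preliminaries) that skew chains are symmetric to force $a\wedge b=b\wedge a\iff a\vee b=b\vee a$, then produce one of the two commutations by hand.

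For the second stage, the inclusions $\{a\vee b\mid a\in A,\ b\in B,\ a\vee b=b\vee a\}\subseteq J$ and $\{a\wedge b\mid\dots\}\subseteq M$ are immediate, since $a\vee b\in\DD_a\vee\DD_b=A\vee B=J$ and dually for $M$. For the reverse inclusion $J\subseteq\{a\vee b\mid\dots\}$, I would take an arbitrary $j\in J$ and produce $a\in A$, $b\in B$ with $j=a\vee b=b\vee a$. The natural candidates are $a:=j\wedge a_0\wedge j$ for some fixed $a_0\in A$ — but more to the point, by Proposition~\ref{up} applied to $J\geq A$ there is $a\in A$ with $j\geq a$, and similarly $b\in B$ with $j\geq b$ (using $J\geq B$); one then argues $j=a\vee b$. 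The commutation $a\vee b=b\vee a$ would follow either from the first stage applied to this particular $a$ (and a uniqueness/consistency argument that the commuting partner produced there, when joined with $a$, recovers $j$) or directly: since $a,b\leq j$ we have $a\vee b\leq j$ and $b\vee a\leq j$ by Proposition~\ref{list_order}(i)-type reasoning adapted to the partial order, and a rank/class argument shows $a\vee b$ and $b\vee a$ both lie in $J=\DD_j$ and are $\leq j$, hence both equal $j$ by the partial order within a single $\DD$-class being trivial. The meet identity $M\subseteq\{a\wedge b\mid\dots\}$ is handled dually, or by the left-right / join-meet duality $x\wedge y=y\vee x$ noted in the preliminaries.

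**The main obstacle** I anticipate is the first stage: manufacturing the commuting partner $b\in B$ from a given $a\in A$ and verifying \emph{both} $a\vee b=b\vee a$ \emph{and} $a\wedge b=b\wedge a$ simultaneously. Getting one commutation is a matter of choosing the right conjugate; getting the other for the \emph{same} $b$ is where symmetry of the skew lattice is essential, and I would need to check carefully that the skew diamond (or at least the relevant sub-skew-lattice) is symmetric — this should follow since each of the four sub-configurations $\{J>A\}$, $\{J>B\}$, $\{A>M\}$, $\{B>M\}$ is a primitive skew lattice, primitive skew lattices are skew chains, and skew chains are symmetric (as stated in the preliminaries, citing \cite{Le92}). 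The second potential snag is the consistency between the existence statement and the set-equalities: one must ensure that when $j\in J$ is decomposed via Proposition~\ref{up} as lying above some $a\in A$ and some $b\in B$, these can be taken to be a \emph{commuting} pair whose join is exactly $j$ and not merely $\leq j$; I expect this to come out of the observation that within the single $\DD$-class $J$, the natural partial order is trivial, so $a\vee b\leq j$ with $a\vee b\in J$ forces $a\vee b=j$.
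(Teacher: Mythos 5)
The paper itself offers no proof of Proposition \ref{comutdiam} --- it is quoted from \cite{Le89} as a preliminary --- so your attempt has to be judged on its own merits. Your second stage is essentially sound: given $j\in J$, Proposition \ref{up} supplies $a\in A$ and $b\in B$ with $a,b\leq j$; a one-line computation with the definition of $\leq$ (namely $(a\vee b)\vee j=a\vee j=j$ and $j\vee(a\vee b)=j\vee b=j$) gives $a\vee b\leq j$, and since $a\vee b\in\DD_a\vee\DD_b=J=\DD_j$, Proposition \ref{order_dclass} forces $a\vee b=j=b\vee a$; dually for $M$. That is a complete argument for the ``moreover'' part.

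The first stage, however, has a genuine gap. Your explicit candidate $b:=(c\vee a\vee c)\wedge c\wedge(c\vee a\vee c)$ is just $c$ again: by Proposition \ref{list_order}(ii) we have $c\leq c\vee a\vee c$, and by Lemma \ref{order_id} that inequality is precisely the identity $(c\vee a\vee c)\wedge c\wedge(c\vee a\vee c)=c$, so nothing has been ``corrected'' (note also that $c\vee a\vee c$ lies in $J$, not in $\DD_c=B$ as you assert). Your fallback via symmetry does not work either: the skew diamond is not a skew chain, the proposition assumes no symmetry, and symmetry of the primitive sub-configurations $\set{J>A}$, $\set{A>M}$, etc.\ only concerns pairs of elements inside those subalgebras, so it says nothing about the commutation of an $a\in A$ with a $b\in B$ drawn from incomparable classes. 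The missing idea is to \emph{sandwich} the partner: set $j=a\vee c\vee a$ and $m=a\wedge c\wedge a$, so that $m\leq a\leq j$ with $j\in J$ and $m\in M$, and then produce $b\in B$ with $m\leq b\leq j$, for instance $b=m\vee\bigl(j\wedge c\wedge j\bigr)\vee m$ (one checks $b\in B$ from $\DD_b=M\vee B\vee M=B$, $b\geq m$ by Lemma \ref{order_id}, and $b\leq j$ because $m\leq j$ and $j\wedge c\wedge j\leq j$). Once $a$ and $b$ both lie between $m$ and $j$, the very antichain argument of your second stage yields $a\vee b=j=b\vee a$ and $a\wedge b=m=b\wedge a$ for the \emph{same} $b$; no symmetry is required anywhere.
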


\begin{proposition}\cite{Le89}\label{order_dclass}
Let $\mathbf{S}$ be a skew lattice and $x,y\in S$. Then $x\geq y$ together with $x\DD y$ implies $y=x$. In other words, all $\DD $-classes are antichains.
\end{proposition}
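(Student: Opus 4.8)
The plan is to reduce the statement to a one-line computation by pairing the $\wedge$-description of Green's relation $\DD$ with the description of the natural partial order from Lemma~\ref{order_id}. Since in any skew lattice $\DD = \DD_{\wedge}$, the hypothesis $x \DD y$ is equivalent to the pair of identities $x \wedge y \wedge x = x$ and $y \wedge x \wedge y = y$. The hypothesis $x \geq y$, on the other hand, is equivalent by Lemma~\ref{order_id} to $y = x \wedge y \wedge x$.

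First I would simply chain these: from $x \geq y$ we get $y = x \wedge y \wedge x$, and from $x \DD y$ we get $x \wedge y \wedge x = x$; hence $y = x$. If one prefers to argue straight from the definitions without quoting Lemma~\ref{order_id}, note that $x \geq y$ unpacks to $x \wedge y = y = y \wedge x$, so $x \wedge y \wedge x = y \wedge x = y$, while $x \DD y$ gives $x \wedge y \wedge x = x$; once more $x = y$. The dual route, using $x \vee y = x = y \vee x$ together with the $\vee$-form $y \vee x \vee y = y$ of $x \DD y$, yields the same conclusion, so the argument is symmetric in $\wedge$ and $\vee$ as one would expect.

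Finally, the closing phrase of the statement is just a reformulation: two elements of a common $\DD$-class that are comparable under $\leq$ must coincide, which says precisely that each $\DD$-class, viewed as a subposet of $(S,\leq)$, is an antichain.

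As for obstacles, there is essentially none of substance here; the only point requiring a little care is to use the $\wedge$-version (equivalently, the $\vee$-version) of $x \DD y$ that matches the half of the definition of $x \geq y$ one chooses to work with, so that the two identities compose cleanly.
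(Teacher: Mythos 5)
Your argument is correct and complete: from $x\DD y$ one gets $x\wedge y\wedge x=x$, while $x\geq y$ gives $y=x\wedge y\wedge x$ (either by Lemma~\ref{order_id} or directly from $x\wedge y=y=y\wedge x$), so $x=y$ follows at once, and the antichain reformulation is immediate. The paper states this proposition only as a cited result from \cite{Le89} and supplies no proof of its own, so there is nothing to compare against; your computation is the standard one-line verification and matches the source.
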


The latter result clarifies the relation between the rectangular structure and the order structure of a skew lattice. On the other hand, proposition \ref{preorder} expresses that the natural preorder is \emph{admissible}, in the sense of Wechler (cf. \cite{We92}), with respect to the natural partial order. 
The fact that $\DD$ can be expressed by the natural preorder $\preceq$ allows us to draw diagrams that are capable of representing skew lattices, named \emph{admissible Hasse diagrams}, where the natural partial order is indicated by full edges and the congruence $\DD $ is indicated by dashed edges. 
Skew lattice operations are not uniquely defined by the natural partial order, unlike what happens in the lattice case. Hence, the admissible Hasse diagram expresses partial information of the skew lattice. The admissible Hasse diagram in Figure \ref{fig_nc5L} represents both the right-handed skew lattice determined by the following \emph{Cayley tables} that define the operations in each sided case: 
\medskip

\begin{center}

$\begin{array}{lcl}

\text{Right-handed skew lattice $\mathbf{NC}_{5}^R$:}\medskip

&

&

\\

\begin{tabular}{ l | c c c c c}
  $\wedge$ & 0 & $a$ & $b$ & $c$ & 1 \\
  \hline
  $0$                & 0 & 0 & 0 & 0 & 0 \\
  $a$                & 0 & $a$ & $b$ & 0 & $a$ \\
  $b$                & 0 & $a$ & $b$ & $0$ & $b$ \\
  $c$                & 0 & 0 & $0$ & $c$ & $c$ \\
  $1$                & 0 & $a$ & $b$ & $c$ & 1  
\end{tabular}

&

&

\begin{tabular}{ l | c c c c c}
  $\vee$      & 0 & $a$ & $b$ & $c$ & 1 \\
  \hline
  0                    & 0 & $a$ & $b$ & $c$ & 1 \\
  $a$                & $a$ & $a$ & $a$ & 1 & 1 \\
  $b$                & $b$ & $b$ & $b$ & 1 & 1 \\
  $c$                & $c$ & 1 & 1 & $c$ & 1 \\
  1                    & 1 & 1 & 1 & 1 & 1  
\end{tabular}

\vspace{0.8cm}

\end{array}$

$\begin{array}{lcl}

\text{Left-handed skew lattice $\mathbf{NC}_{5}^L$:}\medskip

&

&

\\

\begin{tabular}{ l | c c c c c}
  $\wedge$ & 0 & $a$ & $b$ & $c$ & 1 \\
  \hline
  0                & 0 & 0 & 0 & 0 & 0 \\
  $a$                & 0 & $a$ & $a$ & 0 & $a$ \\
  $b$                & 0 & $b$ & $b$ & 0 & $b$ \\
  $c$                & 0 & 0 & 0 & $c$ & $c$ \\
  1                & 0 & $a$ & $b$ & $c$ & 1  
\end{tabular}

&

&

\begin{tabular}{ l | c c c c c}
  $\vee$      & 0 & $a$ & $b$ & $c$ & 1 \\
  \hline
  0                & 0 & $a$ & $b$ & $c$ & 1 \\
 $a$                & $a$ & $a$ & $b$ & 1 & 1 \\
  $b$                & $b$ & $a$ & $b$ & 1 & 1 \\
  $c$                & $c$ & 1 & 1 & $c$ & 1 \\
  1                & 1 & 1 & 1 & 1 & 1  
\end{tabular}

\end{array}$
\end{center}

\medskip

The Cayley tables completely determine the skew lattice but the diagram itself could represent two different skew lattices: one left-handed skew lattice and other right-handed skew lattice.

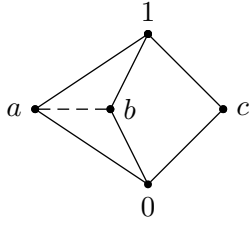
\begin{figure}[here]
\begin{center}

$\begin{array}{lcr}

&

\begin{pspicture}(-2,-2)(2,2)

\psline[linewidth=0.5 pt,linestyle=dashed]{*-*}(-0.5,0)(-1.5,0)

\psline[linewidth=0.5pt]{*-*}(-1.5,0)(0,1)
\psline[linewidth=0.5 pt]{*-*}(-0.5,0)(0,1)
\psline[linewidth=0.5pt]{*-*}(1,0)(0,1)

\psline[linewidth=0.5pt]{*-*}(1,0)(0,-1)
\psline[linewidth=0.5 pt](-0.5,0)(0,-1)
\psline[linewidth=0.5 pt](-1.5,0)(0,-1)

\uput[0](-0.5,0){$b$}
\uput[180](-1.5,0){$a$}
\uput[1](1,0){$c$}
\uput[90](0,1){$1$}
\uput[270](0,-1){$0$}

\end{pspicture}

&

\end{array}$

\caption{\small \sl The admissible Hasse diagram of both the skew lattices $\mathbf{NC}_{5}^R$ and $\mathbf{NC}_{5}^L$.} \label{fig_nc5L} 

\end{center}  
\end{figure}

\section{Classical Ideals and Skew Ideals}\label{Classical Ideals and Skew Ideals}

A \emph{preordered set} $\mathbf{S}=(S,\preceq)$ (or $\mathbf{S}=S_{\preceq}$) is a nonempty set $S$ endowed with a preorder $\preceq$. When the preorder is a partial order $\leq$ we call $\mathbf S$ a \emph{partially ordered set} (or \emph{poset}) and write $\mathbf{S}=(S,\leq)$ (or $\mathbf{S}=S_{\leq}$), instead. All skew lattices are preordered by $\preceq $ and partially ordered by $\leq$.
A nonempty subset $I$ [$F$] of a preodered set $\mathbf P$ is \emph{downwards [upwards] -directed} if, whenever $x\in S$, $y\in I$ [$z\in F$] and $x\leq y$ [$x\leq x$], then $x\in I$ [$x\in F$]. 
A \emph{poset lattice ideal} $I$ [\emph{filter} $F$] of a poset $\mathbf P$ is any nonempty downwards [upwards] -directed subset of $P$ .   
When $I$ [$F$] is also closed under the join operation, $\vee $ [meet operation, $\wedge$], then it is called an \emph{lattice ideal} [\emph{filter}].
Filters are often regarded as \textit{dual ideals} (cf. \cite{Gr71}).

In the following paragraphs we present the classical ideals of a skew lattice, derived from the natural preorder $\preceq$. 

A nonempty subset $I$ of a skew lattice $\mathbf{S}$ closed under $\vee $ is an \emph{ideal} of $\mathbf{S}$ if, for all $x\in S$ and $y\in I$, $x\preceq y$ implies $x\in I$.
Regarding that $\DD $ is a congruence in any skew lattice $\mathbf S$, whenever $I$ is a sub skew lattice of $\mathbf S$, we will use the following notation: $$ I/\DD =\set{\DD_{x}:x\in I}.$$
As lattices are skew lattices that coincide with its lattice image, we always look at a lattice as the lattice image of a skew lattice. Thus, whenever $I$ is a lattice ideal, $\bigcup I/\DD = I.$

\begin{proposition}\label{charac_weak}
Let $\mathbf{S}$ be a skew lattice and $I$ a subset of $\mathbf{S}$ closed under $\vee$. The following statements equivalently define an ideal:
\begin{itemize}
\item[$i)$] for all $x\in S$ and $y\in I$, $x\preceq y$ implies $x\in I$ ;
\item[$ii)$] for all $x\in S$ and $y\in I$, $y\wedge x , x\wedge y\in I$;
\item[$iii)$] for all $x\in S$ and $y\in I$, $x\wedge y\wedge x\in I$.
\end{itemize}
\end{proposition}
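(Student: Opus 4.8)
The plan is to establish the cycle of implications $i)\Rightarrow ii)\Rightarrow iii)\Rightarrow i)$, using only Proposition~\ref{list_order} and the defining identity of the natural preorder, namely that $x\preceq y$ (i.e. $y\succeq x$) holds exactly when $x\wedge y\wedge x=x$. Note that closure of $I$ under $\vee$ is a standing hypothesis throughout and is not itself needed to pass between $i)$, $ii)$ and $iii)$; it only records that each of the three conditions is being asked of a $\vee$-closed set.

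For $i)\Rightarrow ii)$, fix $x\in S$ and $y\in I$. Applying Proposition~\ref{list_order}(i) to the pair $(y,x)$ gives $y\wedge x\preceq y$, and applying it to the pair $(x,y)$ gives $x\wedge y\preceq y$; since $y\in I$ and $I$ satisfies $i)$, both $y\wedge x$ and $x\wedge y$ lie in $I$. For $ii)\Rightarrow iii)$, fix $x\in S$ and $y\in I$; by $ii)$ we have $x\wedge y\in I$, and then applying $ii)$ once more with the element $x\wedge y\in I$ in place of the ideal element and $x$ again in the role of the arbitrary element yields $(x\wedge y)\wedge x\in I$, which by associativity of $\wedge$ is $x\wedge y\wedge x$. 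Finally, for $iii)\Rightarrow i)$, suppose $x\in S$, $y\in I$ and $x\preceq y$; by the characterisation of $\preceq$ recalled above we have $x=x\wedge y\wedge x$, and $iii)$ gives $x\wedge y\wedge x\in I$, hence $x\in I$.

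There is no real obstacle here: the argument is pure bookkeeping, and the only point requiring a little care is getting the variable substitutions into Proposition~\ref{list_order} the right way round (so that the conjunctions land below $y$, the element known to be in $I$) and recalling that $x\preceq y$ unwinds to the identity $x\wedge y\wedge x=x$ rather than its dual. If one prefers, the implication $iii)\Rightarrow i)$ can instead be read off directly from Lemma~\ref{order_id} together with Proposition~\ref{list_order}(ii), but the preorder identity makes the step most transparent.
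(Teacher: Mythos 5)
Your proof is correct and follows essentially the same route as the paper: the cycle $i)\Rightarrow ii)\Rightarrow iii)\Rightarrow i)$, with $i)\Rightarrow ii)$ via Proposition~\ref{list_order}(i), $ii)\Rightarrow iii)$ by a second application of $ii)$ (the paper writes $x\wedge y\wedge x=(x\wedge y)\wedge(y\wedge x)$, you compute $(x\wedge y)\wedge x$; same idea), and $iii)\Rightarrow i)$ from the identity $x\preceq y\Leftrightarrow x\wedge y\wedge x=x$. No issues.
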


\begin{proof}
Let us show that $(i)\Rightarrow (ii)\Rightarrow (iii)\Rightarrow (i)$.
Let $x\in S$ and $y\in I$. 
If $(i)$ holds, propositionosition \ref{list_order} implies that $y\wedge x , x\wedge y\preceq y$ so that $y\wedge x , x\wedge y\in I$.
Now, assuming $(ii)$ we get $x\wedge y\wedge x=(x\wedge y)\wedge (y\wedge x)\in I$. 
Finally, from the assumptions in $(ii)$ follows that $x\wedge y\wedge x\in I$ and $x\wedge y\wedge x\preceq y$. Thus, $x=x\wedge y\wedge x\in I$, completing the proof.
\end{proof}

Analogously, a nonempty subset $F$ closed under $\vee$ of a skew lattice $\mathbf{S}$ is a \emph{filter}  of $\mathbf{S}$ if one of the following equivalent statements holds:
\begin{itemize}
\item[$i)$] for all $x\in F$ and $y\in S$, $x\preceq y$ implies $y\in F$ ;
\item[$ii)$] for all $x\in F$ and $y\in S$, $y\vee x , x\vee y\in F$;
\item[$iii)$] for all $x\in F$ and $y\in S$, $ y\vee x\vee y\in F$.
\end{itemize}  

The notion of ideals of skew lattices was first mentioned in \cite{BL} where $(ii)$ is used, and defined in \cite{Le08} using $(iii)$ in the context of skew Boolean algebras. 
Later was defined  in \cite{Ba11} using $(i)$ and mentioned as a subset that is \textit{lower with respect to} $\preceq$. 
In this last paper the theory of ideals of skew lattices revealed to be a relevant tool to reach the Stone duality variation for skew Boolean algebras with intersections (that is, skew Boolean algebras with finite greatest lower bound with respect to the partial order $\leq$) for which this theory is fairly ``well-behaved'' (cf. \cite{Ba11}).  
Motivated by the definition of lattice ideals in \cite{Gr71} we present an alternative characterization of ideals and filters.

\begin{proposition}\label{weak_Gr}
Let $\mathbf{S}$ be a skew lattice and $I$ a nonempty subset of $S$. Then $I$ is an ideal iff the following equivalence holds for all $a,b\in S$

$$ a,b\in I \Leftrightarrow a\vee b\vee a\in I.$$

Analogously, a nonempty subset $F$ of $S$ is a filter iff for all $a,b\in S$, $$ a,b\in F \Leftrightarrow a\wedge b\wedge a\in F.$$
\end{proposition}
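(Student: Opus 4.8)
The plan is to prove both implications of the displayed equivalence for ideals directly, using Proposition~\ref{list_order}(i) together with the equivalent descriptions of an ideal given in Proposition~\ref{charac_weak}, and then to obtain the filter statement by duality. I will use freely that $\preceq$ is a preorder (hence transitive) and that, by definition, $x\preceq y$ means exactly $y\vee x\vee y=y$.

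For the forward implication, assume $I$ is an ideal and let $a,b\in S$. If $a,b\in I$, then $a\vee b\in I$ because $I$ is closed under $\vee$, whence $a\vee b\vee a=(a\vee b)\vee a\in I$. Conversely, suppose $a\vee b\vee a\in I$. By Proposition~\ref{list_order}(i) we have $a,b\preceq a\vee b$ and also $a\vee b\preceq(a\vee b)\vee a=a\vee b\vee a$; transitivity then gives $a\preceq a\vee b\vee a$ and $b\preceq a\vee b\vee a$, so the ideal property forces $a,b\in I$.

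For the backward implication, assume that $a,b\in I\Leftrightarrow a\vee b\vee a\in I$ for all $a,b\in S$ and verify that $I$ meets the conditions of Proposition~\ref{charac_weak}. Nonemptiness is part of the hypothesis. To get downward $\preceq$-closure, take $x\preceq y$ with $y\in I$; then $y\vee x\vee y=y\in I$, so the assumed equivalence applied to the pair $(y,x)$ yields $y,x\in I$, in particular $x\in I$. For closure under $\vee$, let $a,b\in I$; the equivalence gives $a\vee b\vee a\in I$, and since $a\vee b\preceq(a\vee b)\vee a=a\vee b\vee a$ by Proposition~\ref{list_order}(i), the downward closure just proved gives $a\vee b\in I$. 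Hence $I$ is an ideal. The filter statement follows by applying this to the skew lattice obtained from $\mathbf S$ by interchanging $\vee$ and $\wedge$: the natural preorder is unchanged, a subset that is closed under the new join (namely $\wedge$) and upward closed for $\preceq$ is exactly a filter of $\mathbf S$, and $a\vee b\vee a$ computed with the swapped operations is $a\wedge b\wedge a$.

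The step that needs the most care is closure under $\vee$ in the backward direction: the hypothesis only constrains membership of elements of the form $a\vee b\vee a$ and says nothing directly about $a\vee b$, so one must first extract the $\preceq$-downward closure and only then exploit $a\vee b\preceq a\vee b\vee a$. The rest is bookkeeping with associativity, idempotency, and Proposition~\ref{list_order}.
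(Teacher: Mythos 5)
Your proof is correct and follows essentially the same route as the paper's: both directions rest on $a,b\preceq a\vee b\vee a$ and on downward $\preceq$-closure, and you are in fact more careful than the paper, which in the backward direction simply asserts ``$I$ is closed under $\vee$'' while you correctly note that this needs the already-established downward closure together with $a\vee b\preceq a\vee b\vee a$. One small slip in your duality step: the natural preorder of the skew lattice with $\vee$ and $\wedge$ interchanged is the \emph{reverse} of $\preceq$, not ``unchanged'' (the defining condition $y\wedge^{d}x\wedge^{d}y=y$ reads $y\vee x\vee y=y$, i.e.\ $x\preceq y$); since you then invoke \emph{upward} closure for $\preceq$, you clearly used the reversal implicitly and the argument stands.
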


\begin{proof}
Let us suppose that $I$ is an ideal of $\mathbf{S}$. The direct implication is obvious. Let $a,b\in S$ such that $a\vee b\vee a\in I$. As $a,b\preceq a\vee b\vee a$ then $a,b\in I$.

Conversely suppose that the equivalence holds. Thus, $I$ is closed under $\vee$. Let $a\in I$ and $b\in S$ such that $b\preceq  a$. Then $a\vee b\vee a=a\in I$ and therefore $b\in I$.

The proof regarding filters is similar. 
\end{proof}

\begin{corollary}\label{weak_sub}
All ideals and filters in a skew lattice are sub skew lattices.
\end{corollary}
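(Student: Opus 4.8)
The plan is to read the corollary straight off Proposition \ref{charac_weak} and its filter analogue. Recall that a sub skew lattice of $\mathbf S$ is precisely a nonempty subset closed under both $\vee$ and $\wedge$; since ideals and filters are nonempty by definition, the only thing to check is closure under \emph{both} operations. An ideal $I$ is closed under $\vee$ by definition, so the sole missing piece is closure under $\wedge$, and this is supplied immediately by the equivalent characterization $(ii)$ of Proposition \ref{charac_weak}.

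In detail, for an ideal $I$ and any $a,b\in I$, apply Proposition \ref{charac_weak}$(ii)$ with $x=a\in S$ and $y=b\in I$ to get $a\wedge b\in I$ (and likewise $b\wedge a\in I$). Hence $I$ is closed under $\wedge$ as well as under $\vee$, so $I$ is a sub skew lattice. For a filter $F$ the argument is dual: $F$ is closed under $\wedge$ by definition, and applying the corresponding characterization $(ii)$ for filters with $x=a\in F$ and $y=b\in S$ yields $a\vee b,\, b\vee a\in F$ for all $a,b\in F$, so $F$ is closed under $\vee$ too and is therefore a sub skew lattice.

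There is essentially no obstacle here — the statement is a one-step consequence of the equivalent characterizations already established. The only point worth being mildly careful about is invoking the right clause: it is clause $(ii)$ (or, equivalently, the defining closure together with downward/upward directedness via Proposition \ref{list_order}$(i)$), rather than clause $(iii)$, that delivers the binary meet $a\wedge b$ (respectively join $a\vee b$) on the nose instead of the ``sandwiched'' element $a\wedge b\wedge a$; relying on $(iii)$ alone would force an extra absorption-type manipulation that $(ii)$ lets us bypass entirely.
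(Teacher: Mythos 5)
Your proof is correct and essentially matches the paper's: the paper also notes that $\vee$-closure holds by definition and obtains $\wedge$-closure from $x\wedge y\preceq x$ together with the ideal property, which is just the route you mention parenthetically (clause $(i)$ plus Proposition \ref{list_order}$(i)$) and is trivially equivalent to invoking clause $(ii)$ directly as you do.
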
 

\begin{proof}
Let $I$ be an ideal of a skew lattice $\mathbf{S}$.  $I$ is a subset of $S$ closed under the operation $\vee $, by definition.  
On the other hand, if $x,y\in I$ then $x\wedge y\preceq x$ implies $x\wedge y\in I$.
The proof regarding filters is similar.
\end{proof}

\begin{proposition}\label{id_full}
Let $\mathbf{S}$ be a skew lattice, $I$ an ideal of $\mathbf{S}$ and $F$ a filter of $\mathbf{S}$. If $x\in S$, $y\in I$ and $x\DD y$, then $x\in I$. Similarly, if $x\in S$, $y\in F$ and $x\DD y$, then $x\in F$. 
\end{proposition}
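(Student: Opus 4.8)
The plan is to prove the statement about ideals, and then note that the filter case follows by the formal duality $\wedge \leftrightarrow \vee$. So fix a skew lattice $\mathbf{S}$, an ideal $I$, and suppose $x \in S$, $y \in I$ with $x \Drel y$. The goal is $x \in I$, and the natural tool is the characterization of ideals by condition $(iii)$ of Proposition \ref{charac_weak}: since $y \in I$ and $x \in S$, we have $x \wedge y \wedge x \in I$. So it suffices to show $x = x \wedge y \wedge x$.

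The key step is therefore to show that $x \Drel y$ forces $x \wedge y \wedge x = x$. Recall that $x \Drel y$ means $x \preceq y$ and $y \preceq x$; by the definition of the natural preorder this says $x \wedge y \wedge x = x$ (from $y \succeq x$, i.e.\ $x \preceq y$) directly. Indeed, $x \preceq y$ unpacks as $x \wedge y \wedge x = x$, which is exactly what we need — no further work is required. Then $x = x \wedge y \wedge x \in I$ by Proposition \ref{charac_weak}$(iii)$, completing the ideal case. (One should double-check the orientation of the preorder relative to Proposition \ref{charac_weak}: an ideal is closed downwards under $\preceq$, and $x \Drel y$ gives in particular $x \preceq y$ with $y \in I$, so $x \in I$ follows even more immediately from condition $(i)$. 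The $\wedge$-computation via $(iii)$ is just an alternative route that makes the ``absorption'' flavour explicit.)

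For the filter case, the argument is dual: a filter is closed under $\wedge$ and upwards under $\preceq$, so from $y \in F$, $x \Drel y$ we get $y \preceq x$, hence $x \in F$ by the filter analogue of condition $(i)$; alternatively, using condition $(iii)$ for filters, $y \vee x \vee y \in F$ and $y \preceq x$ gives $x = y \vee x \vee y \in F$.

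I do not expect any real obstacle here: the statement is essentially immediate once one observes that $\Drel = {\preceq} \cap {\succeq}$ and recalls the definitions. If anything, the only thing to be careful about is consistency of notation — writing $\Drel$ where the body text uses $\DD$ — and making sure to invoke the correct one of the equivalent defining conditions for an ideal (closure downwards under $\preceq$) rather than accidentally arguing with $\leq$, which would be strictly weaker and would not give the result (Proposition \ref{preorder} only gives ${\leq} \subseteq {\preceq}$, not the converse). The whole proof is two lines: apply Proposition \ref{charac_weak}$(iii)$ and Lemma-level unpacking of $x \preceq y$.
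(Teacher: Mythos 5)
Your main argument is exactly the paper's proof: since $\Drel={\preceq}\cap{\succeq}$, the hypothesis $x\,\DD\,y$ gives $x\preceq y$ with $y\in I$, so $x\in I$ by the defining condition $(i)$ of an ideal, and dually for filters. The only blemish is in your optional ``alternative route'' for the filter case, where $y\preceq x$ unpacks as $x=x\vee y\vee x$ (not $x=y\vee x\vee y$, which would say $x\geq y$); this slip is inessential since your primary argument is correct and complete.
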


\begin{proof}
If $I$ is an ideal and $x\in S$, whenever $y\in I$ is such that $x\DD y$, then $x\preceq y$ implying that $x\in I$.
The proof for filters is analogous.  
\end{proof}

\begin{corollary}\label{id_sd}
Let $\mathbf{S}$ be a skew lattice and $I, F\subseteq S$ being unions of $\DD$-classes of $\mathbf S$. 
Then, $I/\DD$ is a lattice ideal of $\mathbf{S}/\DD$ iff $I$ is an ideal of $\mathbf{S}$; dually, $F/\DD$ is a lattice filter of $ \mathbf{S}/\DD$ iff $F$ is a filter of $\mathbf{S}$. 
\end{corollary}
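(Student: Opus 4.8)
The plan is to prove the biconditional in both directions using the fact that $\DD$ is a congruence on $\mathbf{S}$, together with the characterizations of ideals already in hand (Proposition \ref{weak_Gr} and the $\preceq$-based definition) and Proposition \ref{id_full}. Throughout, the standing assumption that $I$ is a union of $\DD$-classes is what makes the two notions interact cleanly, so I would flag its use at each step. I will write out the ideal case; the filter case is dual and I will say so rather than repeat it.

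First I would handle the direction ``$I$ an ideal of $\mathbf{S}$ $\Rightarrow$ $I/\DD$ a lattice ideal of $\mathbf{S}/\DD$.'' Since $I$ is nonempty, so is $I/\DD$. For closure under $\vee$ in $\mathbf{S}/\DD$: given $\DD_x,\DD_y \in I/\DD$ with $x,y\in I$, we have $x\vee y\in I$ because $I$ is closed under $\vee$, and $\DD_x\vee\DD_y=\DD_{x\vee y}\in I/\DD$ since $\DD$ is a congruence. For downward closure: suppose $\DD_z\leq\DD_x$ in $\mathbf{S}/\DD$ with $x\in I$. In the lattice $\mathbf{S}/\DD$ this means $\DD_z\wedge\DD_x=\DD_z$, i.e. $\DD_{z\wedge x}=\DD_z$, so $z\wedge x\DD z$. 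Now $z\wedge x\preceq x$ by Proposition \ref{list_order}(i), so $z\wedge x\in I$; hence $z\in I$ by Proposition \ref{id_full}, and thus $\DD_z\in I/\DD$. (Alternatively, invoke that $\bigcup I/\DD = I$ here, since $I$ is a union of $\DD$-classes, and conclude $\DD_z\subseteq I$.)

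For the converse, ``$I/\DD$ a lattice ideal $\Rightarrow$ $I$ an ideal,'' I would verify the characterization in Proposition \ref{weak_Gr}: for all $a,b\in S$, $a,b\in I \Leftrightarrow a\vee b\vee a\in I$. Using that $I$ is the union of the $\DD$-classes it contains, $a\in I \Leftrightarrow \DD_a\in I/\DD$, and similarly for $b$ and for $a\vee b\vee a$. Since $\DD$ is a congruence, $\DD_{a\vee b\vee a}=\DD_a\vee\DD_b\vee\DD_a=\DD_a\vee\DD_b$ in the lattice $\mathbf{S}/\DD$ (join is commutative and idempotent there). So the claimed equivalence for $I$ translates exactly into: $\DD_a,\DD_b\in I/\DD \Leftrightarrow \DD_a\vee\DD_b\in I/\DD$, which is precisely the statement that $I/\DD$ is a lattice ideal of $\mathbf{S}/\DD$ (by the $\mathbf{S}/\DD$ instance of Proposition \ref{weak_Gr}, or directly: the forward direction is closure under $\vee$, the backward direction is downward closure since $x\leq y$ in a lattice iff $x\vee y=y$). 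Finally I would remark that the filter statement follows by the order-dual argument, using the filter half of Proposition \ref{weak_Gr} and Proposition \ref{id_full}.

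I do not expect a serious obstacle here; the one point requiring care is keeping straight which operation ($\vee$ or $\wedge$) witnesses order in the lattice image and making sure the ``union of $\DD$-classes'' hypothesis is actually invoked to pass between $x\in I$ and $\DD_x\subseteq I$ — without it the statement can fail, and a clean proof should make transparent where it enters.
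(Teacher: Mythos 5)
Your proof is correct and follows essentially the same route as the paper, which simply cites Propositions \ref{weak_Gr} and \ref{id_full} as giving the result directly; you have filled in exactly the details that combination presupposes (translating $a\in I$ into $\DD_a\in I/\DD$ via the union-of-$\DD$-classes hypothesis, and matching the $a\vee b\vee a$ characterization against closure and downward-closure in the lattice image). No discrepancy worth noting.
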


\begin{proof}
This result is a direct consequence of propositions \ref{weak_Gr} and \ref{id_full}.
\end{proof}

Let us denote by $\I (S)$ the set of all ideals and by $\F (S)$ the set of all filters of a skew lattice $\mathbf S$. The result bellow was proved in \cite{Sl73a} for Slav\' ik's version of skew lattices and also holds in Leech's version studied in this dissertation.

\begin{theorem}\cite{BL}\label{slavik}
Let $\mathbf S$ be a skew lattice. Then, $(\I (S); \cap, \cup)$ and $(\F (S); \cap, \cup)$ are complete lattices (with respect to the set inclusion) isomorphic to the lattices of all ideals and the lattice of all filters of the lattice $\mathbf S/\DD $, respectively. 
\end{theorem}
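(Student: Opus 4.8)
The plan is to build an explicit order isomorphism $\Phi \colon \I(S) \to \I(S/\DD)$ and show it preserves and reflects inclusion, so that it is automatically a lattice isomorphism; the statement about completeness then follows since $\I(S/\DD)$, the ideal lattice of a lattice, is well known to be a complete lattice closed under arbitrary intersections. By Corollary~\ref{weak_sub} every ideal $I$ is a sub skew lattice, and by Proposition~\ref{id_full} it is a union of $\DD$-classes; hence the map $\Phi(I) = I/\DD = \set{\DD_x : x\in I}$ is well defined, and by Corollary~\ref{id_sd} $\Phi(I)$ is genuinely a lattice ideal of $\mathbf S/\DD$. Conversely, given a lattice ideal $K$ of $\mathbf S/\DD$, set $\Psi(K) = \bigcup K = \set{x\in S : \DD_x \in K}$; again Corollary~\ref{id_sd} guarantees $\Psi(K)$ is an ideal of $\mathbf S$. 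The first main step is to verify $\Psi\circ\Phi = \mathrm{id}_{\I(S)}$ and $\Phi\circ\Psi = \mathrm{id}_{\I(S/\DD)}$: the former is precisely the observation already recorded in the preliminaries that $\bigcup I/\DD = I$ whenever $I$ is a sub skew lattice that is a union of $\DD$-classes (which holds here by Proposition~\ref{id_full}), and the latter is immediate from the definition of $\Psi$.

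The second step is to check that both $\Phi$ and $\Psi$ are monotone with respect to set inclusion: $I_1 \subseteq I_2$ clearly gives $I_1/\DD \subseteq I_2/\DD$, and $K_1 \subseteq K_2$ gives $\bigcup K_1 \subseteq \bigcup K_2$. A bijection that is monotone in both directions is an order isomorphism, hence a lattice isomorphism between $(\I(S);\subseteq)$ and $(\I(S/\DD);\subseteq)$. Since joins and meets in a lattice of ideals ordered by inclusion are determined by the order (meet is intersection; join is the ideal generated by the union), $\Phi$ automatically satisfies $\Phi(I_1\cap I_2) = \Phi(I_1)\cap\Phi(I_2)$ and $\Phi(I_1\vee I_2) = \Phi(I_1)\vee\Phi(I_2)$, giving the claimed isomorphism of the structures $(\I(S);\cap,\cup)$ — here I read $\cup$ as the ideal-join, i.e. the smallest ideal containing both, rather than the naive set union. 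Finally, $\I(S/\DD)$ is a complete lattice: arbitrary intersections of lattice ideals of $\mathbf S/\DD$ are lattice ideals (an easy direct check, or cite \cite{Gr71}), so arbitrary meets exist, and a poset with all meets and a top element is a complete lattice. Transporting this along the isomorphism $\Phi$ shows $(\I(S);\cap,\cup)$ is complete, and the argument for filters is dual throughout, using the filter analogues of Corollaries~\ref{weak_sub}, \ref{id_sd} and Proposition~\ref{id_full}.

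The step I expect to carry the real weight is the well-definedness of $\Phi$, i.e. the fact that an ideal of a skew lattice is forced to be saturated for $\DD$ — but this has already been isolated as Proposition~\ref{id_full}, so in the present context it is a citation rather than an obstacle. With that in hand the remainder is a formal bijection-plus-monotonicity argument. The only point requiring a word of care is the precise meaning of the binary operation $\cup$ in the statement: on the lattice $\I(S)$ ordered by inclusion it must denote the join operation $I_1 \vee I_2 = \bigcap\set{J \in \I(S) : I_1\cup I_2 \subseteq J}$ (and likewise on $\I(S/\DD)$), because the set-theoretic union of two ideals need not be an ideal; once this reading is fixed, the order isomorphism $\Phi$ transports it correctly with no further work.
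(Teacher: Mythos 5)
The paper does not actually prove this theorem: it is quoted from \cite{BL}, with the remark that the analogous result was proved in \cite{Sl73a} for Slav\'{\i}k's version of skew lattices. So there is no internal proof to compare against; what can be said is that your argument is exactly the one the paper's own development is designed to support. The map $I\mapsto I/\DD$ with inverse $K\mapsto\bigcup K$ is well defined by Proposition~\ref{id_full} (ideals are saturated for $\DD$), lands in the right codomain by Corollary~\ref{id_sd}, and the two composites are the identity by the observation $\bigcup I/\DD=I$ recorded in Section~\ref{Classical Ideals and Skew Ideals}; monotonicity in both directions then upgrades the bijection to a lattice isomorphism. Your reading of $\cup$ as the ideal-join rather than set union is the right one, and your reduction of the binary operations to the order is sound. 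This is a correct and essentially self-contained proof of the cited result from the paper's own lemmas.

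One small point deserves care in the completeness step. You assert that arbitrary intersections of lattice ideals of $\mathbf S/\DD$ are again lattice ideals; for binary (and finite) intersections this is fine, since for $a\in I_1$ and $b\in I_2$ the element $a\wedge b$ lies in both, but an \emph{infinite} intersection can be empty (take the principal ideals along an infinite descending chain in $\mathbf S/\DD$), and the paper's definition requires ideals to be nonempty. So either one must adopt the convention that the empty set counts as an ideal, or restrict to skew lattices whose lattice image has a least element, or define the infinitary meet as the largest ideal contained in all members of the family (which then need not be the set-theoretic intersection and need not exist). This caveat is inherited from the statement itself as quoted from \cite{BL} rather than being a defect specific to your argument, but as written the sentence ``arbitrary meets exist because arbitrary intersections are ideals'' is not literally true under the paper's definitions and should be qualified.
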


A nonempty subset $I$ of $\mathbf{S}$ closed under $\vee $ is a \emph{skew ideal} of $\mathbf{S}$ if, for all $x\in S$ and $y\in I$,  $x\leq y$ implies $x \in I$. 
Dually, a nonempty subset $F$ of $\mathbf{S}$ closed under $\wedge$ is a \emph{skew filter} of $\mathbf{S}$ if, for all $x\in S$ and $y\in F$,  $x\geq y$ implies $x \in F$.

\begin{obs}
Due to propositionosition \ref{preorder}, all ideals are examples of skew ideals.
The set $\set{0,a,c}$ in Figure \ref{fig_nc5L} is also an example of a skew ideal that is not an ideal, illustrating that not all skew ideals are ideals.
\end{obs}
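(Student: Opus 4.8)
The Remark bundles two separate claims, and I would prove them in turn; neither is deep.

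\emph{Every ideal is a skew ideal.} This needs no computation. If $I$ is an ideal of $\mathbf S$, then by definition $I$ is a nonempty subset closed under $\vee$, which is exactly the first clause in the definition of a skew ideal, so it remains only to verify the order clause. Let $x\in S$ and $y\in I$ with $x\le y$. Proposition \ref{preorder} gives $x\preceq y$, and since $I$ is an ideal, $x\in I$. Hence $I$ is a skew ideal. The whole content is simply that, as a set of ordered pairs, $\le$ is contained in $\preceq$, so downward closure under $\preceq$ is the stronger requirement.

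\emph{Some skew ideal is not an ideal.} Here I would exhibit an explicit witness inside $\mathbf{NC}_5$; the handedness is immaterial, since both copies carry the admissible diagram of Figure \ref{fig_nc5L}. I would take the two-element set $I=\set{0,a}$. (One should note that the set $\set{0,a,c}$ is not closed under $\vee$, as $a\vee c=1$ in both $\mathbf{NC}_5^R$ and $\mathbf{NC}_5^L$; the intended example should be read as $\set{0,a}$, or symmetrically $\set{0,b}$.) That $I$ is closed under $\vee$ is read off the Cayley tables: $0\vee 0=0$, $0\vee a=a\vee 0=a$, and $a\vee a=a$. That $I$ is downward closed under $\le$ follows from Lemma \ref{order_id}: $x\le a$ means $x=a\wedge x\wedge a$, which among $0,a,b,c,1$ holds only for $x\in\set{0,a}$, and $x\le 0$ forces $x=0$. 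So $I$ is a skew ideal. But $I$ is not an ideal: since $a\DD b$ we have $b\preceq a$ (recall $x\DD y$ iff $x\preceq y$ and $y\preceq x$), while $a\in I$ and $b\notin I$, which contradicts the defining implication of an ideal. One can also argue indirectly: by Proposition \ref{id_full} every ideal is a union of $\DD$-classes, whereas $I$ splits the class $\DD_a=\set{a,b}$.

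There is no genuine obstacle here. The only thing demanding a little care is, in the second part, to test the skew-ideal condition with $\le$ and not with $\preceq$, and to keep in mind that replacing $\le$ by $\preceq$ identifies all elements of each $\DD$-class — which is exactly the discrepancy between the two notions that the example is designed to display.
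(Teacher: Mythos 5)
Your handling of the first claim is exactly the intended argument: Proposition \ref{preorder} says the relation $\leq$ is contained in $\preceq$, so any $\vee$-closed subset that is downward closed for $\preceq$ is automatically downward closed for $\leq$; the paper offers nothing beyond the citation of Proposition \ref{preorder}, so there is nothing further to compare there. On the second claim you have in fact caught a genuine defect in the remark as stated: in both $\mathbf{NC}_{5}^{R}$ and $\mathbf{NC}_{5}^{L}$ the Cayley tables give $a\vee c=c\vee a=1$, so $\set{0,a,c}$ is not closed under $\vee$ and therefore fails the paper's own definition of a skew ideal (it is merely a down-set for $\leq$). Your replacement witness $I=\set{0,a}$ is correct and fully verified: it is closed under $\vee$; it is $\leq$-downward closed, since by Lemma \ref{order_id} the only elements below $a$ are $0$ and $a$ (e.g.\ $a\wedge b\wedge a\neq b$ and $a\wedge c\wedge a=0\neq c$); and it is not an ideal because $b\preceq a$ (indeed $b\Drel a$) while $b\notin I$ --- equivalently, by Proposition \ref{id_full} an ideal must be a union of $\DD$-classes, and $I$ splits $\DD_{a}=\set{a,b}$. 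This is consistent with the paper itself, which in a later remark uses $I=\set{0,b}$ as its skew ideal in $NC_5^L$. So your proof is correct, and the remark's stated example should be amended to $\set{0,a}$ or $\set{0,b}$ as you propose.
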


\begin{proposition}\label{id-sub}
All skew ideals [filters] are sub skew lattices.
\end{proposition}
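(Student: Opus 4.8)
The plan is to show that a skew ideal, which by definition is already closed under $\vee$, is also closed under $\wedge$; a dual argument then handles skew filters. The one tool I need is Proposition~\ref{list_order}(iii), which asserts $x\wedge y\leq y\vee x$ for all $x,y\in S$, since it is exactly the bridge connecting the two operations to the partial order $\leq$ that governs skew ideals.

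First I would let $I$ be a skew ideal of $\mathbf{S}$ and take arbitrary $x,y\in I$. Because $I$ is closed under $\vee$ (part of the definition of a skew ideal), the element $y\vee x$ lies in $I$. Proposition~\ref{list_order}(iii) gives $x\wedge y\leq y\vee x$, and since $I$ is downward closed with respect to $\leq$ — again the defining property of a skew ideal — we conclude $x\wedge y\in I$. Hence $I$ is closed under both $\vee$ and $\wedge$, so it is a sub skew lattice.

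For a skew filter $F$ the argument is dual: $F$ is closed under $\wedge$ by definition, so for $x,y\in F$ the element $y\wedge x$ belongs to $F$; applying Proposition~\ref{list_order}(iii) with the roles of $x$ and $y$ interchanged yields $y\wedge x\leq x\vee y$, and the upward closure of $F$ with respect to $\leq$ forces $x\vee y\in F$. Thus $F$ is closed under both operations and is a sub skew lattice as well. There is no real obstacle here beyond recognising that inequality (iii) is the right lever; the only point requiring care is to verify that the witness $y\vee x$ (respectively $y\wedge x$) genuinely lies in the ideal (respectively filter) before invoking its closure condition, which is exactly what the built-in closure under $\vee$ (respectively $\wedge$) guarantees.
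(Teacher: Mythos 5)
Your proof is correct, but it takes a different route from the paper's. The paper first shows $x\wedge y\wedge x$ and $y\wedge x\wedge y$ lie in $I$ (since $x\wedge y\wedge x\leq x$ and $y\wedge x\wedge y\leq y$ by Proposition~\ref{list_order}(ii)), and then recovers $x\wedge y$ as the \emph{join} $(y\wedge x\wedge y)\vee(x\wedge y\wedge x)$, using the rectangular structure of the $\DD$-class containing both elements to convert the meet into a join; closure under $\vee$ then finishes the argument. You instead go the other way around: you first form $y\vee x\in I$ by closure under $\vee$ and then descend to $x\wedge y$ via the inequality $x\wedge y\leq y\vee x$ of Proposition~\ref{list_order}(iii). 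Your argument is shorter and avoids the rectangular-band identity entirely, at the cost of leaning on part (iii) of the cited proposition rather than part (ii); the paper's version has the mild advantage of exhibiting $x\wedge y$ explicitly as a join of elements already known to be in $I$, which is in the spirit of its later coset computations. One cosmetic remark: for the ideal case you establish $x\wedge y\in I$ and should note (as you do implicitly by symmetry) that $y\wedge x\in I$ follows by exchanging the roles of $x$ and $y$; likewise for $y\vee x\in F$ in the filter case. Both closures are needed for $I$ to be a subalgebra, but this is a one-line addition, not a gap.
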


\begin{proof}
Let $\mathbf{S}$ be a skew lattice and $I$ a skew ideal of $\mathbf S$.
By definition, $I$ is closed under $\vee $. We shall see that $I$ is also closed under $\wedge$.
Let $x,y\in I$. As $x\wedge y\wedge x\leq x$ and $y\wedge x\wedge y\leq y$, both $x\wedge y\wedge x$ and $y\wedge x\wedge y$ are in $I$.
But $x\wedge y=(x\wedge y\wedge x)\wedge (y\wedge x\wedge y)=(y\wedge x\wedge y)\vee (x\wedge y\wedge x)$ where the second equality follows from the fact that  $(x\wedge y\wedge x)\DD (y\wedge x\wedge y)$.
Hence, $x\wedge y\in I$ as required. The proof for $y\wedge x\in I$ is similar.
The case of skew filters is analogous.
\end{proof}

Based on the order characterization for the partial order described in proposition \ref{order_id} we characterize skew ideals and filters of a skew lattice as follows:

\begin{proposition}\label{sideal_charac}
A nonempty subset $I$ of $\mathbf S$ is a skew ideal of $\mathbf S$ iff 
\begin{itemize}
\item[$(i)$] for all $x,y\in I$, $x\vee y\in I$; 
\item[$(ii)$] for all $x\in S$ and $y\in I$,  $y\wedge x\wedge y\in I$. 
\end{itemize}
\end{proposition}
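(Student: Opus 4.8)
The plan is to prove the two implications separately, using Lemma~\ref{order_id} (which says $x\leq y$ iff $y=x\wedge y\wedge x$) as the bridge between the order condition defining skew ideals and the algebraic condition $(ii)$.

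First I would assume $I$ is a skew ideal and verify $(i)$ and $(ii)$. Condition $(i)$ is immediate, since by definition a skew ideal is closed under $\vee$. For $(ii)$, take $x\in S$ and $y\in I$. By Proposition~\ref{list_order}(ii) we have $y\wedge x\wedge y\leq y$, and since $I$ is a skew ideal and $y\in I$, this forces $y\wedge x\wedge y\in I$. So this direction is essentially a one-line consequence of the definition together with the order facts already recorded.

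Conversely, I would assume $I$ satisfies $(i)$ and $(ii)$ and show $I$ is a skew ideal. Closure under $\vee$ is exactly $(i)$. It remains to check the downward-closure condition: if $x\in S$, $y\in I$ and $x\leq y$, then $x\in I$. Here is where Lemma~\ref{order_id} enters: $x\leq y$ gives $x=x\wedge y\wedge x$. Wait --- I must be careful about the form of the expression: Lemma~\ref{order_id} states $x\geq y$ iff $y=x\wedge y\wedge x$, so applying it with the roles reversed, $y\geq x$ (i.e. $x\leq y$) gives $x=y\wedge x\wedge y$. Now apply $(ii)$ with this $x$ and $y$ (noting $y\in I$): we get $y\wedge x\wedge y\in I$, which is precisely $x\in I$. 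That closes the argument.

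I do not anticipate a genuine obstacle here; the proposition is a routine restatement, and the only point requiring any care is matching the precise placement of the repeated variable in the $\wedge$-expression --- that is, making sure one invokes Lemma~\ref{order_id} in the orientation that produces $x=y\wedge x\wedge y$ rather than $x=x\wedge y\wedge x$, so that the term handed to condition $(ii)$ has exactly the shape $y\wedge x\wedge y$ with the ideal element $y$ on the outside. One could equally phrase condition $(ii)$ with $x$ and $y$ swapped and it would make no difference, but internal consistency with the statement as written is what matters. Everything else is immediate from Proposition~\ref{list_order} and the definition of a skew ideal.
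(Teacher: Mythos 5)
Your proposal is correct and follows essentially the same route as the paper: the forward direction uses $y\wedge x\wedge y\leq y$ from Proposition~\ref{list_order}, and the converse uses the identity $x=y\wedge x\wedge y$ coming from Lemma~\ref{order_id} when $x\leq y$. Your explicit care about the orientation of Lemma~\ref{order_id} (obtaining $x=y\wedge x\wedge y$ rather than $x=x\wedge y\wedge x$) is exactly the step the paper performs implicitly.
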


\begin{proof}
Let $x\in S$ and $y\in I$. As $y\wedge x\wedge y\leq y$ and $I$ is a skew ideal containing $y$ it follows that $y\wedge x\wedge y\in I$.
Conversely, let $x\in S$ and $y\in I$ be such that $y\wedge x\wedge y\in I$ and $x\leq y$. Then $x=y\wedge x\wedge y \in I$ as required.
\end{proof}

An analogous proof verifies the following characterization of skew filters.

\begin{proposition}\label{sfilter_charac}
A nonempty subset $F$ of $\mathbf S$ is a skew filter of $\mathbf S$ iff 
\begin{itemize}
\item[$(i)$] for all $x,y\in I$, $x\wedge y\in F$; 
\item[$(ii)$] for all $x\in S$ and $y\in F$,  $y\vee x\vee y\in F$. 
\end{itemize}
\end{proposition}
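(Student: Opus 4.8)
The plan is to mirror, in dual form, the argument just given for Proposition~\ref{sideal_charac}, replacing $\wedge$ by $\vee$, the partial order $\leq$ by $\geq$, and the bound $y\wedge x\wedge y\leq y$ by its dual $y\vee x\vee y\geq y$ from Proposition~\ref{list_order}(ii). So I would first unwind the definition: $F$ is a skew filter iff it is closed under $\wedge$ and, for all $x\in S$ and $y\in F$, $x\geq y$ implies $x\in F$. The claim is that, given closure under $\wedge$ (condition $(i)$), the upward-closure condition is equivalent to the weaker-looking condition $(ii)$: $y\vee x\vee y\in F$ whenever $y\in F$ and $x\in S$.

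For the forward direction, assume $F$ is a skew filter. Take $x\in S$ and $y\in F$. By Proposition~\ref{list_order}(ii) we have $y\leq y\vee x\vee y$, i.e. $y\vee x\vee y\geq y$; since $y\in F$ and $F$ is upward-closed under $\geq$, we conclude $y\vee x\vee y\in F$, which is $(ii)$. Closure under $\wedge$, i.e. $(i)$, is immediate by the dual of Proposition~\ref{id-sub} (skew filters are sub skew lattices). Conversely, suppose $(i)$ and $(ii)$ hold; then $F$ is closed under $\wedge$, so it remains to check upward-closure. Let $x\in S$ and $y\in F$ with $x\geq y$. By Lemma~\ref{order_id}, $x\geq y$ gives $x=y\vee x\vee y$, and by $(ii)$ the right-hand side lies in $F$; hence $x\in F$, as required. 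That $F$ is nonempty is part of the hypothesis.

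The only point that needs a moment's care — and the closest thing to an obstacle — is making sure the ``dualization'' of Lemma~\ref{order_id} and Proposition~\ref{list_order}(ii) is applied with the correct operation. In Lemma~\ref{order_id} the two equivalent forms of $x\geq y$ are $y=x\wedge y\wedge x$ and $x=y\vee x\vee y$; it is the \emph{second} form that is needed here, so no genuine dualization of the lemma is even required, just a choice of which clause to invoke. Likewise Proposition~\ref{list_order}(ii) already supplies $x\leq x\vee y\vee x$ directly (with the roles of the variables renamed), so the bound $y\leq y\vee x\vee y$ is obtained immediately. Thus the proof is genuinely routine once one keeps track of which of the two symmetric clauses of each cited result to use, and the phrase ``an analogous proof verifies'' in the statement is fully justified: one simply transcribes the proof of Proposition~\ref{sideal_charac} under the exchange $\wedge\leftrightarrow\vee$, $\leq\leftrightarrow\geq$.

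\begin{proof}
Let $x\in S$ and $y\in F$. Since $y\leq y\vee x\vee y$ by Proposition~\ref{list_order}(ii), we have $y\vee x\vee y\geq y$, and as $F$ is a skew filter containing $y$ it follows that $y\vee x\vee y\in F$; closure under $\wedge$ holds by Proposition~\ref{id-sub}. Conversely, suppose $(i)$ and $(ii)$ hold, so $F$ is closed under $\wedge$. Let $x\in S$ and $y\in F$ be such that $x\geq y$. By Lemma~\ref{order_id}, $x=y\vee x\vee y$, and $(ii)$ gives $y\vee x\vee y\in F$; hence $x\in F$, as required.
\end{proof}
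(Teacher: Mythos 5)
Your proof is correct and is exactly the "analogous proof" the paper alludes to: it dualizes the argument for Proposition~\ref{sideal_charac}, using $y\leq y\vee x\vee y$ from Proposition~\ref{list_order}(ii) for the forward direction and the clause $x=y\vee x\vee y$ of Lemma~\ref{order_id} for the converse. (Condition $(i)$ is in fact immediate from the definition of a skew filter as a subset closed under $\wedge$, so the appeal to Proposition~\ref{id-sub} is harmless but unnecessary.)
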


Denote by $\I^{*} (S)$ [$\F^{*} (S)$] the set of all skew ideals [filters] of a skew lattice $\mathbf S$.

\begin{proposition}
Let $\mathbf S$ be a skew lattice. Then, $(\I^{*}(S), \wedge ,\vee )$ and $(\F^{*}(S), \wedge ,\vee )$ are complete lattices considering the $\wedge $ operation to be the usual intersection of sets. 
\end{proposition}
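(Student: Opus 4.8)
The plan is to realise $\I^{*}(S)$ as a closure system (Moore family) on the set $S$ and then invoke the standard fact that a collection of subsets of a fixed set which contains that set and is closed under arbitrary intersections forms a complete lattice under inclusion, with meet given by set intersection and join given by the intersection of all members above the union; the treatment of $\F^{*}(S)$ is dual. The first ingredient is that $S$ itself is a skew ideal --- it is nonempty, closed under $\vee$, and vacuously down-closed for $\leq$ --- so it is the top element of $\I^{*}(S)$. The second ingredient is closure under intersection: given a family $\{I_j\}_{j\in\Lambda}$ of skew ideals with $I:=\bigcap_j I_j$ nonempty, I would check $I\in\I^{*}(S)$ via Proposition~\ref{sideal_charac}. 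Condition $(i)$: if $x,y\in I$ then $x,y\in I_j$ for all $j$, so $x\vee y\in I_j$ for all $j$, hence $x\vee y\in I$. Condition $(ii)$: if $x\in S$ and $y\in I$ then $y\in I_j$ for all $j$, so $y\wedge x\wedge y\in I_j$ for all $j$ by Proposition~\ref{sideal_charac}$(ii)$, hence $y\wedge x\wedge y\in I$.

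Granting these two ingredients, for an arbitrary family $\{I_j\}\subseteq\I^{*}(S)$ set $\bigvee_j I_j:=\bigcap\{K\in\I^{*}(S):I_j\subseteq K\text{ for all }j\}$; the index family is nonempty (it contains $S$), the intersection contains each nonempty $I_j$ hence is nonempty, hence is a skew ideal by the previous step, and it is clearly the least skew ideal above every $I_j$. So every subfamily has a supremum, and together with the top element $S$ this makes $\I^{*}(S)$ a complete lattice; when $\bigcap_j I_j$ is nonempty it is a skew ideal below every $I_j$ and above every common lower bound, so it is the infimum, confirming that $\wedge$ is literally intersection of sets. For $\F^{*}(S)$ one repeats the argument with $\vee$ and $\wedge$ swapped, using Proposition~\ref{sfilter_charac} in place of Proposition~\ref{sideal_charac}.

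The only point needing care --- and the one I would expect to be the real obstacle rather than any of the above verifications, which are routine given Propositions~\ref{sideal_charac} and~\ref{sfilter_charac} --- is the empty intersection: distinct singletons in a rectangular skew lattice are skew ideals meeting in $\emptyset$, so under the convention that skew ideals are nonempty, $\I^{*}(S)$ literally lacks a least element and binary meets need not be set intersections. This is resolved by the customary device of admitting $\emptyset$ as the bottom skew ideal --- it vacuously satisfies the conditions of Proposition~\ref{sideal_charac} --- after which the closure-system argument applies verbatim; equivalently, when $\bigcap\I^{*}(S)$ is nonempty (for instance in any skew lattice with $0$, since $y\wedge 0\wedge y=0$ forces $0$ into every skew ideal) it is the least element and no adjustment is needed.
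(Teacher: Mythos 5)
Your proof is correct and takes essentially the same route as the paper: exhibit $S$ as the top skew ideal, show that nonempty intersections of skew ideals are skew ideals, and conclude by the standard closure-system argument (the paper checks the intersection step directly from the definition of skew ideal rather than via Proposition~\ref{sideal_charac}, an immaterial difference). Your caveat about empty intersections --- e.g.\ two distinct singletons in a rectangular skew lattice are skew ideals with empty intersection, so under the nonemptiness convention set-theoretic intersection cannot literally be the meet --- is a genuine subtlety that the paper's own proof passes over in silence, and your proposed fixes (admit $\emptyset$ as bottom, or assume a zero element) are the right ones.
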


\begin{proof}
$\mathbf S$ is an ideal and, therefore, a skew ideal. Let us show that the intersection of skew ideals is a skew ideal.
Let $\set{I_{k}:k\in K}$ be a class of skew ideals of a skew lattice $\mathbf S$.
Clearly $\bigcap_{k\in K}I_{k}$ is closed under $\vee $. 
Let $x\in S$ and $y\in \bigcap_{k\in K}I_{k}$ such that $x\leq y$. 
The fact that $y$ is simultaneously a member of all skew ideals $I_{k}$ ensures that $x\in \bigcap_{k\in K}I_{k}$. 
The result then follows. 
The proof regarding filters is analogous.
\end{proof}

\begin{obs}
In the case of skew ideals one can easily see that, in general, the skew ideal does not cover the $\DD $-classes it intersects, that is, $x\Drel y$ with $x\in S$ and $y\in I$ does not necessarily imply that $x\in I$. To see that just consider the left-handed skew lattice $NC_5^L$ given by the Cayley tables in Figure \ref{fig_nc5L}. 
For instance, in that example, $a\Drel b$ and $a\notin I=\set{0,b}$. 
Moreover, $NC_5^L$ has $9$ skew ideals while its lattice image has only $4$ ideals.
\end{obs}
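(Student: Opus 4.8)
The remark bundles a general assertion — that a skew ideal, unlike a genuine ideal, need not be a union of $\DD$-classes — with a concrete witness inside $NC_5^L$ and two cardinality figures, so I would treat it as a verification: exhibit and check the witness, then enumerate. The only results needed are the characterization of skew ideals in Proposition \ref{sideal_charac}, the $\DD$-fullness of ideals from Proposition \ref{id_full}, the antichain property of $\DD$-classes from Proposition \ref{order_dclass}, and Theorem \ref{slavik}; everything else is finite table-chasing in $NC_5^L$.

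First I would confirm that $I=\set{0,b}$ is a skew ideal of $NC_5^L$ using Proposition \ref{sideal_charac}: the set is nonempty, it is closed under $\vee$ directly from the $\vee$-table ($0\vee 0=0$, $0\vee b=b\vee 0=b$, $b\vee b=b$), and for condition $(ii)$ one checks $y\wedge x\wedge y\in I$ for $y\in\set{0,b}$ and all $x\in S$ — when $y=0$ the product is $0$, and when $y=b$ the five sandwich products $b\wedge x\wedge b$ read off the $\wedge$-table all lie in $\set{0,b}$ — hence $I$ is a skew ideal of $NC_5^L$. Next I would verify $a\Drel b$: on a band, $\Drel$ is $a\wedge b\wedge a=a$ and $b\wedge a\wedge b=b$, and the $\wedge$-table gives $a\wedge b=a$, $b\wedge a=b$, so both hold (dually $a\vee b\vee a=a$, $b\vee a\vee b=b$). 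Since $b\in I$, $a\Drel b$ and $a\notin I$, the skew ideal $I$ meets the $\DD$-class $\set{a,b}$ without containing it — the claimed failure of $\DD$-fullness — and since Proposition \ref{id_full} forces any ideal to be $\DD$-full, the same $I$ is a skew ideal that is not an ideal.

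For the counts I would read the order off the admissible Hasse diagram of Figure \ref{fig_nc5L}: under $\leq$, $0$ is least, $1$ greatest, and $a,b,c$ form a three-element antichain between them ($a$ and $b$ being incomparable also by Proposition \ref{order_dclass}, since $a\Drel b$). A skew ideal is a $\leq$-downset that is closed under $\vee$; every nonempty downset contains $0$, a downset omitting $1$ is $\set{0}\cup T$ for some $T\subseteq\set{a,b,c}$, and a downset containing $1$ is all of $S$, which accounts for nine downsets; discarding those not closed under $\vee$ (using $a\vee b=b$ and $b\vee a=a$, while $a\vee c=b\vee c=1$) leaves exactly the skew ideals. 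For the lattice image, the $\DD$-classes are $\DD_0=\set{0}$, $\DD_a=\set{a,b}$, $\DD_c=\set{c}$, $\DD_1=\set{1}$, so $NC_5^L/\DD$ is the four-element lattice with bottom $\DD_0$, top $\DD_1$ and incomparable pair $\DD_a,\DD_c$, i.e.\ $\mathbf 2\times\mathbf 2$; its lattice ideals are $\set{\DD_0}$, $\set{\DD_0,\DD_a}$, $\set{\DD_0,\DD_c}$ and the whole lattice, and by Theorem \ref{slavik} these correspond bijectively to the ideals of $NC_5^L$. There is no real obstacle here beyond careful bookkeeping; the points to watch are keeping the $\vee$-closure requirement active while listing the $\leq$-downsets, and computing $\Drel$ correctly so that the witness genuinely splits a $\DD$-class.
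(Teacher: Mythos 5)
Your verification of the witness is correct and is exactly the table-check this remark calls for: $I=\set{0,b}$ is nonempty, $\vee$-closed, and satisfies $b\wedge x\wedge b\in\set{0,b}$ for all five $x$, so it is a skew ideal by Proposition \ref{sideal_charac}; $a\wedge b=a$ and $b\wedge a=b$ give $a\Drel b$ with $a\notin I$; and the four lattice ideals of $NC_5^L/\DD\cong\mathbf{2}\times\mathbf{2}$ are as you list them. Since the paper offers no argument beyond pointing at the Cayley tables, up to this point you have supplied more detail than it does.

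The gap is in the count of skew ideals: you stop at ``discarding those not closed under $\vee$ \dots\ leaves exactly the skew ideals'' without actually performing the discard, and performing it does not give $9$. Of the nine nonempty $\leq$-downsets, three fail $\vee$-closure, namely $\set{0,a,c}$, $\set{0,b,c}$ and $\set{0,a,b,c}$, precisely because $a\vee c=b\vee c=1$ as you yourself record; so under the definition of skew ideal given in Section \ref{Classical Ideals and Skew Ideals} the enumeration yields only $6$ skew ideals: $\set{0}$, $\set{0,a}$, $\set{0,b}$, $\set{0,c}$, $\set{0,a,b}$ and $S$. The figure $9$ is the number of nonempty $\leq$-downsets, i.e.\ it is correct only if the $\vee$-closure requirement is dropped from the definition; the same tension already appears in the paper's preceding remark, which calls $\set{0,a,c}$ a skew ideal even though $a\vee c=1\notin\set{0,a,c}$. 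A complete verification should carry the enumeration through, obtain $6$, and flag this discrepancy explicitly (noting that the qualitative point of the remark --- that there are strictly more skew ideals than ideals of the lattice image --- survives either way, since $6>4$).
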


\section{Principal Ideals and Normality}\label{Principal Ideals and Normality}

Let $X$ be a nonempty subset of a lattice $\mathbf{S}$. Let $Y$ be the set of all [skew] ideals of $\mathbf{S}$ containing $X$. The intersection $M$ of all elements in $Y$ is also a [skew] ideal of $\mathbf{S}$ that contains $X$ is called the [skew] \emph{ideal generated by} $X$, denoted by $X^{\downarrow} $ [$X^{\downarrow ^{*}}$]. If $X$ is a singleton and $x\in X$, then $M$ is said to be a \emph{principal} [skew] \emph{ideal} generated by $x$, written $x^{\downarrow} $ [$x^{\downarrow ^{*}}$]. Principal [skew] filters have an analogous definition and are denoted by $x^{\uparrow} $ [$x^{\uparrow ^{*}}$]. 

\begin{proposition}\label{sxs}
Let $\mathbf{S}$ be a skew lattice and $x\in S$. Then,

\begin{itemize}
\item[$i)$] $x^{\downarrow}  =S\wedge x\wedge S=\set{y\in S:y\preceq x}$ and $x^{\uparrow} =S\vee x\vee S=\set{y\in S:x\preceq y}$. 
\item[$ii)$]  $x^{\downarrow ^{*}} = x\wedge S\wedge x=\set{y\in S:y\leq x}$ and $x^{\uparrow ^{*}}=x\vee S\vee x=\set{y\in S:x\leq y}$. 
\end{itemize}

\end{proposition}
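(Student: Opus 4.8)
The plan is to prove the four set-equalities in Proposition~\ref{sxs} by a combination of the characterizations already established and a direct double-inclusion argument. I will treat part $(i)$ and part $(ii)$ in parallel, since each splits into the ``$\wedge$-side'' statement for ideals/skew ideals and the dual ``$\vee$-side'' statement for filters/skew filters; by duality it suffices to argue one representative of each.

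For part $(i)$, I would first establish the description $x^{\downarrow}=\{y\in S:y\preceq x\}$. Put $J=\{y\in S:y\preceq x\}$. By Proposition~\ref{weak_Gr}, to see $J$ is an ideal it is enough to check the equivalence $a,b\in J\Leftrightarrow a\vee b\vee a\in J$: if $a,b\preceq x$ then $a\vee b\vee a\preceq x$ using that $\preceq$ is a preorder together with Proposition~\ref{list_order}(i) (applied iteratively: $a\vee b\preceq$ an element above both, hence $\preceq x$, and similarly $a\vee b\vee a$), and conversely $a,b\preceq a\vee b\vee a\preceq x$ is immediate from Proposition~\ref{list_order}(i) and transitivity; so $J\in\I(S)$. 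Clearly $x\in J$. Any ideal $I$ with $x\in I$ satisfies $J\subseteq I$ directly from the defining property of an ideal (if $y\preceq x\in I$ then $y\in I$), so $J$ is the smallest such ideal, i.e. $J=x^{\downarrow}$. Then the identity $x^{\downarrow}=S\wedge x\wedge S$ follows by showing $S\wedge x\wedge S=\{y:y\preceq x\}$: the inclusion $\subseteq$ is Proposition~\ref{list_order}(i), which gives $s\wedge x\wedge s'\preceq x$ (again $s\wedge x\preceq x$ and then $(s\wedge x)\wedge s'\preceq s\wedge x\preceq x$); for $\supseteq$, if $y\preceq x$ then $x\wedge y\wedge x=y$ by the very definition of $\preceq$, so $y=x\wedge y\wedge x\in S\wedge x\wedge S$. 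The filter statement $x^{\uparrow}=S\vee x\vee S=\{y:x\preceq y\}$ is the order dual, using the filter version of Proposition~\ref{weak_Gr} and the dual clauses of Proposition~\ref{list_order}(i).

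For part $(ii)$, the argument is structurally the same but uses the natural partial order $\leq$ and the skew-ideal characterization of Proposition~\ref{sideal_charac} in place of Proposition~\ref{weak_Gr}. Set $K=\{y\in S:y\leq x\}$. First, $K$ is a skew ideal: condition $(ii)$ of Proposition~\ref{sideal_charac} needs $y\wedge z\wedge y\in K$ whenever $y\in K$, $z\in S$, which holds because $y\wedge z\wedge y\leq y\leq x$ by Proposition~\ref{list_order}(ii) and transitivity of $\leq$; condition $(i)$ needs $y\vee y'\in K$ when $y,y'\leq x$, and here I would use that $x=x\vee x\vee x$ together with the fact that $y\leq x$, $y'\leq x$ force $y\vee y'\leq x$ — this can be seen from Lemma~\ref{order_id} ($y\leq x$ means $x=y\vee x\vee y$) and a short computation, or more cleanly by noting $x\vee(y\vee y')\vee x$ absorbs down to $x$. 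Next $x\in K$, and any skew ideal $I$ with $x\in I$ contains $K$ by definition of skew ideal, so $K=x^{\downarrow^{*}}$. Finally $x^{\downarrow^{*}}=x\wedge S\wedge x$: for $\subseteq$, if $y\leq x$ then $y=x\wedge y\wedge x$ by Lemma~\ref{order_id}, so $y\in x\wedge S\wedge x$; for $\supseteq$, $x\wedge s\wedge x\leq x$ is exactly Proposition~\ref{list_order}(ii). The skew-filter identity $x^{\uparrow^{*}}=x\vee S\vee x=\{y:x\leq y\}$ is again the dual, via Proposition~\ref{sfilter_charac} and the dual half of Lemma~\ref{order_id}.

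The main obstacle I anticipate is the closure-under-$\vee$ step in verifying that $K=\{y:y\leq x\}$ is genuinely a skew ideal (equivalently, that $S\wedge x\wedge S$ and $x\wedge S\wedge x$ are closed under the appropriate join): unlike the $\preceq$ case, where Proposition~\ref{list_order}(i) and transitivity of the preorder make everything immediate, here one must show $y,y'\leq x\Rightarrow y\vee y'\leq x$, and $\leq$ is only a partial order with weaker compatibility. I would handle this by reducing to Lemma~\ref{order_id}: from $x=y\vee x\vee y$ and $x=y'\vee x\vee y'$ one computes, using associativity, idempotency and absorption, that $x\vee(y\vee y')\vee x=x$, hence $y\vee y'\leq x$ by the dual clause of Lemma~\ref{order_id}; alternatively one can invoke that $x^{\downarrow^{*}}$ already exists as a skew ideal (being an intersection of skew ideals) and argue set-theoretic equality of the three descriptions directly. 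Everything else is a routine application of Propositions~\ref{list_order}, \ref{order_id}, \ref{weak_Gr}, \ref{sideal_charac} and their duals.
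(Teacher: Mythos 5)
Your overall architecture matches the paper's: both identify $x^{\downarrow}$ with the $\preceq$-down-set and $x^{\downarrow^{*}}$ with the $\leq$-down-set, verify that these are (skew) ideals containing $x$, and check minimality against an arbitrary (skew) ideal containing $x$. However, three of your justifications fail at exactly the computational points where work is needed, and two of them stem from the same confusion about which element sits on the \emph{outside} of the sandwich expressions. First, to show $\{y\in S: y\preceq x\}\subseteq S\wedge x\wedge S$ you claim that $y\preceq x$ gives $x\wedge y\wedge x=y$ ``by the very definition of $\preceq$''. It does not: the definition gives $y\wedge x\wedge y=y$, whereas $x\wedge y\wedge x=y$ is, by Lemma \ref{order_id}, the strictly stronger statement $y\leq x$. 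The inclusion still holds, but via $y=y\wedge x\wedge y\in S\wedge x\wedge S$, as in the paper. Second, your justification that $a,b\preceq x$ forces $a\vee b\vee a\preceq x$ (``$a\vee b\preceq$ an element above both, hence $\preceq x$'') is not an argument: Proposition \ref{list_order}(i) only says $a\vee b$ is a $\preceq$-upper bound of $a$ and $b$, and transitivity yields nothing from $a\preceq x$ together with $a\preceq a\vee b$. The correct route is the regularity computation the paper uses ($x\vee a\vee b\vee x=x\vee a\vee x\vee b\vee x=x$), or the observation that $\DD$ is a congruence so that $\DD_{a\vee b}=\DD_a\vee\DD_b\leq\DD_x$ in the lattice $S/\DD$.

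The third problem is the step you yourself flagged as the main obstacle, and your proposed resolution is invalid. From $y,y'\leq x$ you compute $x\vee(y\vee y')\vee x=x$ and conclude $y\vee y'\leq x$ ``by the dual clause of Lemma \ref{order_id}''. But $x\vee z\vee x=x$ is precisely the definition of $z\preceq x$; the dual clause of Lemma \ref{order_id} characterizing $z\leq x$ is $z\vee x\vee z=x$, with $z$ on the outside, and the definition of $\leq$ itself requires the two separate identities $x\vee z=x$ and $z\vee x=x$. So your ``more cleanly'' computation proves only the preorder statement, which is not enough for a skew ideal. The repair is actually shorter than what you wrote: since $x\vee y=x=y\vee x$ and $x\vee y'=x=y'\vee x$, associativity alone gives $x\vee(y\vee y')=(x\vee y)\vee y'=x$ and $(y\vee y')\vee x=y\vee(y'\vee x)=x$. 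The paper instead writes $y=x\wedge a\wedge x$ and $y'=x\wedge b\wedge x$ and collapses $y\vee y'\vee x$ and $x\vee y\vee y'$ by absorption; either fix works. Your fallback --- invoking that $x^{\downarrow^{*}}$ already exists as an intersection of skew ideals --- does not avoid this step, because the inclusion $x^{\downarrow^{*}}\subseteq\{y\in S:y\leq x\}$ still requires knowing that $\{y\in S:y\leq x\}$ is itself a skew ideal containing $x$.
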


\begin{proof}
$(i)$ Let us first show that $S\wedge x\wedge S=\set{y\in S:y\preceq x}$.
Fix $a\in S$. Thus, $a\wedge x\wedge b\preceq x$ due to regularity: $a\wedge x\wedge b\wedge x\wedge a\wedge x\wedge b=a\wedge x\wedge b\wedge a\wedge x\wedge b=a\wedge x\wedge b$.
Conversely,  if $a\preceq x$ then $a=a\wedge x\wedge a\in S\wedge x\wedge S$. 
The equality $S\vee x\vee S=\set{y\in S:x\preceq y}$ has an analogous proof. 

Now we will show that $x^{\downarrow}  =S\wedge x\wedge S$.
Let $y,z\in S\wedge x\wedge S$, that is, $y,z\preceq x$. 
Then $x\vee y\vee x=x$ and $x\vee z\vee x=x$ so that $x=x\vee x=x\vee y\vee x\vee x\vee z\vee x=x\vee y\vee x\vee z\vee x=x\vee y\vee z\vee x$ due to regularity. 
Similarly, $x=x\vee z\vee y\vee x$. Hence, $y\vee z,z\vee y\preceq x$ and therefore $y\vee z,z\vee y\in S\wedge x\wedge S$.
Let $y\in S$ and $z\in S\wedge x\wedge S$ such that $y\preceq z$. 
Fix $a,b\in S$ such that $z=a\wedge x\wedge b$. 
Thus, $y\preceq a\wedge x\wedge b$ so that $y=y\wedge a\wedge x\wedge b\wedge y\in S\wedge x\wedge S$. 
By idempotency, $x=x\wedge x\wedge x\in S\wedge x\wedge S$.
Let $I$ be an ideal of $\mathbf S$ such that $x\in I$. 
Let $a,b\in S$. Then, $a\wedge x\wedge b\preceq x\in I$ so that $a\wedge x\wedge b\in I$ and, therefore, $S\wedge x\wedge S\subseteq I$.
The proof regarding principal filters is analogous. 

$(ii)$ Let us first show that $x\wedge S\wedge x=\set{y\in S:y\leq x}$.
Fix $a\in S$. Thus, $x\wedge a\wedge x\leq x$.
Conversely,  let $a\in S$ such that $a\leq x$. Then $a=x\wedge a\wedge x\in x\wedge S\wedge x$.  
The proof of $x\vee S\vee x=\set{y\in S:x\leq y}$ is analogous. 

Now we will show that $x^{\downarrow ^{*}} = x\wedge S\wedge x$.
Let $y,z\in x\wedge S\wedge x$. Let $a,b\in S$ such that $y=x\wedge a\wedge x$ and $z=x\wedge b\wedge x$. By absorption, $y\vee z\vee x=(x\wedge a\wedge x)\vee (x\wedge b\wedge x)\vee x= (x\wedge a\wedge x)\vee x=x$. Similarly $x\vee y\vee z= x$. Hence $y\vee z\leq x$ and its an analogous proof to show that $z\vee y\leq x$. Therefore $y\vee z,z\vee y\in x\wedge S\wedge x$.
Now let $a,b\in S$ such that $a\leq x\wedge b\wedge x$. Then $a=x\wedge b\wedge x\wedge a\wedge x\wedge b\wedge x\in x\wedge S\wedge x$.  
By idempotency, $x=x\wedge x\wedge x\in x\wedge S\wedge x$.
Let $I$ be an ideal of $\mathbf S$ such that $x\in I$. Let $a\in S$. As $x\wedge a\wedge x\leq x\in I$ then $x\wedge a\wedge x\in I$ so that $x\wedge A\wedge x\subseteq I$. 
The proof regarding principal skew filters is analogous. 
\end{proof}

\begin{corollary}
Let $\mathbf{S}$ be a skew lattice and $x\in S$. Then,
$x\wedge S\wedge x \subseteq S\wedge x\wedge S$ and, dually, $x\vee S\vee x \subseteq S\vee x\vee S$.
\end{corollary}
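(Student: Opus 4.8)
The plan is to read off the set-theoretic descriptions of the four sets from Proposition \ref{sxs} and then invoke the implication "$\leq$ implies $\preceq$" from Proposition \ref{preorder}. Concretely, Proposition \ref{sxs}$(ii)$ identifies $x\wedge S\wedge x$ with the principal skew ideal $x^{\downarrow^{*}}=\set{y\in S:y\leq x}$, while Proposition \ref{sxs}$(i)$ identifies $S\wedge x\wedge S$ with the principal ideal $x^{\downarrow}=\set{y\in S:y\preceq x}$. Since every element below $x$ in the natural partial order is also below $x$ in the natural preorder, the first inclusion $\set{y:y\leq x}\subseteq\set{y:y\preceq x}$ is immediate, giving $x\wedge S\wedge x\subseteq S\wedge x\wedge S$.

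For the dual statement I would argue symmetrically: Proposition \ref{sxs}$(ii)$ gives $x\vee S\vee x=x^{\uparrow^{*}}=\set{y\in S:x\leq y}$ and Proposition \ref{sxs}$(i)$ gives $S\vee x\vee S=x^{\uparrow}=\set{y\in S:x\preceq y}$, and again $x\leq y$ implies $x\preceq y$, so $x\vee S\vee x\subseteq S\vee x\vee S$.

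There is essentially no obstacle here: the content is entirely packaged in the two cited propositions, and this corollary is just the observation that the (principal) skew ideal generated by an element sits inside the (principal) ideal generated by the same element — which is the element-level shadow of the containment $\I(S)\subseteq$ "skew ideals" noted in the remark after the definition of skew ideals. If one wanted a proof not routing through Proposition \ref{sxs}, one could instead argue directly: an element of $x\wedge S\wedge x$ has the form $x\wedge a\wedge x$ for some $a\in S$, and a one-line regularity computation shows $x\wedge(x\wedge a\wedge x)\wedge x=x\wedge a\wedge x$ and $(x\wedge a\wedge x)\wedge x\wedge(x\wedge a\wedge x)=x\wedge a\wedge x$, i.e. $x\wedge a\wedge x\preceq x$, whence $x\wedge a\wedge x\in S\wedge x\wedge S$; but citing the two propositions is cleaner.
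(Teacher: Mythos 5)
Your proof is correct and matches the paper's intent: the corollary is stated without proof as an immediate consequence of Proposition \ref{sxs}, whose set descriptions $x\wedge S\wedge x=\set{y:y\leq x}$ and $S\wedge x\wedge S=\set{y:y\preceq x}$ combined with Proposition \ref{preorder} ($\leq$ implies $\preceq$) give exactly your argument. The alternative direct regularity computation you sketch is also fine, but unnecessary.
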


The following result show us how principal ideals and principal skew ideals of a skew lattice $\mathbf S$ relate with the decomposition in $\DD$-classes of $S$.

\begin{proposition}\label{antecip}
Let $\mathbf{S}$ be a skew lattice $X$ a nonempty subset of $S$ and $x, y\in S$. Then,

\begin{itemize}
\item[$i)$] $ X^{\downarrow} =\bigcup \set{\DD_x^{\downarrow}: x\in X} $ and $ X^{\uparrow} =\bigcup \set{\DD_x^{\uparrow}: x\in X} $.
\item[$ii)$] $x^{\downarrow} =y^{\downarrow} \Leftrightarrow \DD_x=\DD_y $ and $x^{\uparrow} =y^{\uparrow} \Leftrightarrow \DD_x=\DD_y $.
\end{itemize}

\end{proposition}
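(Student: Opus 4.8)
The plan is to prove both parts using the explicit description of principal ideals from Proposition \ref{sxs}, namely $x^{\downarrow}=\{y\in S: y\preceq x\}$, together with the fact that $\DD$ is expressed by $\preceq$ (namely $u\DD v$ iff $u\preceq v$ and $v\preceq u$) and that $\DD$ is a congruence with $\mathbf{S}/\DD$ a lattice.

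For part $(i)$, I would first handle the case of a single $\DD$-class: I claim $\DD_x^{\downarrow}=x^{\downarrow}$ for every $x$. Indeed $\DD_x^{\downarrow}$ is the smallest ideal containing $\DD_x$, while $x^{\downarrow}$ is the smallest ideal containing $x$; since any ideal containing $x$ also contains all of $\DD_x$ by Proposition \ref{id_full}, the two smallest such ideals coincide. Granting this, for a general nonempty $X\subseteq S$ the right-hand side $\bigcup\{\DD_x^{\downarrow}:x\in X\}=\bigcup\{x^{\downarrow}:x\in X\}$ is clearly contained in $X^{\downarrow}$, since each $x^{\downarrow}\subseteq X^{\downarrow}$. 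For the reverse inclusion I would show $U:=\bigcup\{x^{\downarrow}:x\in X\}$ is itself an ideal containing $X$; then minimality of $X^{\downarrow}$ forces $X^{\downarrow}\subseteq U$. That $U$ contains $X$ is immediate from $x\in x^{\downarrow}$. That $U$ is $\preceq$-lower is also immediate: if $w\preceq u$ and $u\in x^{\downarrow}$ then $w\preceq u\preceq x$, so $w\in x^{\downarrow}\subseteq U$. The one genuine point is closure under $\vee$: given $u\in x^{\downarrow}$ and $v\in y^{\downarrow}$ with $x,y\in X$, I must locate $u\vee v$ inside some $z^{\downarrow}$ with $z\in X$. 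Passing to $\mathbf{S}/\DD$, the class $\DD_{u\vee v}=\DD_u\vee\DD_v\leq\DD_x\vee\DD_y$ in the lattice $\mathbf{S}/\DD$. This does not obviously land below a single $\DD_z$ unless $X$ is directed — so in fact the honest statement requires $X$ to be closed under finite joins, or one reads $\bigcup\{\DD_x^{\downarrow}:x\in X\}$ as the union over the join-closure of $X$. I expect this closure-under-$\vee$ bookkeeping to be the main obstacle, and I would resolve it by invoking Theorem \ref{slavik}: $X^{\downarrow}/\DD$ is the lattice ideal of $\mathbf{S}/\DD$ generated by $\{\DD_x:x\in X\}$, which consists of all classes below some finite join $\DD_{x_1}\vee\cdots\vee\DD_{x_n}$; pulling back along the surjection $S\to S/\DD$ and using Corollary \ref{id_sd} together with $\DD_x^{\downarrow}=x^{\downarrow}$ identifies $X^{\downarrow}$ with the described union.

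For part $(ii)$, the reverse implications are immediate: if $\DD_x=\DD_y$ then by Proposition \ref{id_full} (or the single-class computation above) $x^{\downarrow}=\DD_x^{\downarrow}=\DD_y^{\downarrow}=y^{\downarrow}$, and dually for filters. For the forward implication, suppose $x^{\downarrow}=y^{\downarrow}$. Then $y\in y^{\downarrow}=x^{\downarrow}$ gives $y\preceq x$, and symmetrically $x\in x^{\downarrow}=y^{\downarrow}$ gives $x\preceq y$; hence $x\DD y$, i.e. $\DD_x=\DD_y$. The filter case is dual, using $x^{\uparrow}=\{y\in S: x\preceq y\}$ from Proposition \ref{sxs}$(i)$ and the same antisymmetry-up-to-$\DD$ argument. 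No serious obstacle here beyond citing the right pieces of Proposition \ref{sxs} and the observation $x\DD y\iff x\preceq y\ \&\ y\preceq x$ recorded just before Proposition \ref{list_order}.
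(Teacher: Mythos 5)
Your part $(ii)$ is correct and is essentially the paper's own argument: $x\in x^{\downarrow}=y^{\downarrow}$ gives $x\preceq y$ and $y\preceq x$, hence $x\Drel y$, and the converse follows from Proposition \ref{id_full}. For part $(i)$ you have put your finger on a genuine problem that the paper's proof does not address. The paper disposes of $(i)$ in one sentence via Proposition \ref{id_full} (ideals are unions of $\DD$-classes, so principal ideals correspond to principal lattice ideals of $\mathbf{S}/\DD$); that reasoning establishes $\DD_x^{\downarrow}=x^{\downarrow}$ and the case of a singleton $X$, but it says nothing about closure of $\bigcup\{x^{\downarrow}:x\in X\}$ under $\vee$, which is exactly the point where you stalled.

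Your worry is justified: the closure genuinely fails when $X$ is not join-directed. Already in the four-element lattice $\{0,a,b,1\}$ with incomparable $a$ and $b$ (a skew lattice with trivial $\DD$), taking $X=\{a,b\}$ gives $X^{\downarrow}=S$, since $a\vee b=1$ must lie in any ideal containing $a$ and $b$, whereas $a^{\downarrow}\cup b^{\downarrow}=\{0,a,b\}$. So statement $(i)$ as written holds only for join-directed $X$ --- in particular for singletons and single $\DD$-classes, which is how it is actually used later (e.g.\ in Proposition \ref{disjointunion}). Your proposed repair, replacing $X$ by its join-closure or, equivalently, describing $X^{\downarrow}/\DD$ as the lattice ideal of $\mathbf{S}/\DD$ generated by $\{\DD_x:x\in X\}$ via Theorem \ref{slavik} and Corollary \ref{id_sd}, is the right way to obtain a correct version. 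In short: where the statement is true, your argument proves it by essentially the paper's route (Proposition \ref{sxs} plus Proposition \ref{id_full}); where your argument cannot be completed, the fault lies with the statement, not with your proof.
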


\begin{proof}
$(i)$ This is a direct consequence of proposition \ref{id_full}: as the ideals in $\mathbf{S}$ are just the unions of the $\DD$-classes that constitute the elements of the ideals of $S/\DD$, then the principal ideals are just unions of blocks constituting the correspondent principal lattice ideal.

$(ii)$ is due to the fact that when $x$ and $y$ generate the same ideal, $x\preceq y$ and $y\preceq x$, that is, $x\in \DD_y $. The converse is a direct consequence of proposition \ref{id_full} as $x\DD y$ implies $x^{\downarrow} \subseteq y^{\downarrow}$. 
\end{proof}

Such a characterization for skew ideals analogous to the one in proposition \ref{antecip} $(i)$ is a much more difficult aim regarding the nature of such algebras.
The recent work on a generalization of Stone duality for skew Boolean algebras with intersections brought up a characterization of skew ideals generated by a nonempty set $X$ recurring to the closure of finite joins of the downward closure of $X$:

\begin{proposition}\cite{Ba11}\label{skewdi}
Let $\mathbf{S}$ be a $\wedge$-distributive skew lattice and $\emptyset \neq X\subseteq S$. Then, $$X^{\downarrow ^{*}}=\set{x_{1}\vee \dots \vee x_{n}\,:\, \text{ for all } i\leq n\, \exists y_{i} \in X\text{ such that } x_{i}\leq y_{i}}.$$
\end{proposition}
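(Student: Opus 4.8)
The plan is to prove the two inclusions separately, writing $T$ for the right-hand set $\{x_1\vee\dots\vee x_n : \forall i\le n\ \exists y_i\in X,\ x_i\le y_i\}$ and recalling from Proposition~\ref{sxs}$(ii)$ that $y^{\downarrow^*}=\{z\in S : z\le y\}$, so that $T=\bigcup_{n\ge 1}\{x_1\vee\dots\vee x_n : x_i\in\bigcup_{y\in X}y^{\downarrow^*}\}$ is exactly the closure under finite joins of $\bigcup_{y\in X}y^{\downarrow^*}$.

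For the inclusion $T\subseteq X^{\downarrow^*}$ I would argue that every skew ideal containing $X$ must contain $T$: if $I$ is a skew ideal with $X\subseteq I$, then for each $y\in X$ we have $y^{\downarrow^*}\subseteq I$ because $I$ is lower with respect to $\le$, and then $I$ is closed under $\vee$, so all finite joins $x_1\vee\dots\vee x_n$ of elements of $\bigcup_{y\in X}y^{\downarrow^*}$ lie in $I$; intersecting over all such $I$ gives $T\subseteq X^{\downarrow^*}$. This direction uses no distributivity and is essentially formal.

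The substantive direction is $X^{\downarrow^*}\subseteq T$, for which it suffices to show that $T$ is itself a skew ideal containing $X$ (then minimality of $X^{\downarrow^*}$ finishes it). That $X\subseteq T$ is immediate by taking $n=1$ and $x_1=y_1\in X$, and closure of $T$ under $\vee$ is built into its definition (concatenating two finite join-expressions). By Proposition~\ref{sideal_charac} the remaining obligation is condition $(ii)$: for $x\in S$ and an element $a=x_1\vee\dots\vee x_n\in T$ with each $x_i\le y_i\in X$, we must check $a\wedge x\wedge a\in T$. This is where $\wedge$-distributivity is essential: distributing $\wedge$ over the inner join should let me rewrite $a\wedge x\wedge a$ as a finite join of terms each of which is $\le$ some $x_i$, hence $\le y_i\in X$, putting it in $T$. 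Concretely, I expect to first reduce $a\wedge x\wedge a$ (via $\wedge$-distributivity, commuting the middle factor appropriately as in the distributive identity $x\wedge(y\vee z)\wedge x=(x\wedge y\wedge x)\vee(x\wedge z\wedge x)$ applied iteratively) to a join $\bigvee_i (a\wedge x_i\wedge a)$ or similar, and then show each summand satisfies $a\wedge x_i\wedge a\le x_i$, using Proposition~\ref{list_order}$(ii)$ together with the fact that $x_i\le y_i$.

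The main obstacle will be the bookkeeping in that last step: $\wedge$ is only a regular band operation, not commutative, so "distributing over a join of $n$ terms on both sides" requires care about the order of factors, and one must genuinely produce, for each resulting summand, an element of $X$ above it. I anticipate invoking $\wedge$-distributivity in the two-variable form and inducting on $n$, at each stage peeling off one $x_i$ and using regularity (e.g.\ $x_i\wedge x\wedge x_i\le x_i\le y_i$) to keep every piece dominated by some member of $X$; symmetry/normality is not needed, only the $\wedge$-distributive identity and the order facts already recorded in Propositions~\ref{list_order} and~\ref{order_id}.
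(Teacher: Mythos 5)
The paper itself offers no proof of Proposition~\ref{skewdi}: it is quoted from \cite{Ba11}, and the term ``$\wedge$-distributive'' is never defined in the present text, so your proposal has to be judged on its own merits. Your skeleton is the natural one and its easy parts are correct: $T\subseteq X^{\downarrow^{*}}$ is formal, $X\subseteq T$ and closure of $T$ under $\vee$ are immediate, and by Proposition~\ref{sideal_charac} everything reduces to showing $a\wedge x\wedge a\in T$ for $a=x_1\vee\dots\vee x_n\in T$ and $x\in S$.

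That last step is where there is a genuine gap, and it is not just bookkeeping. The identity $u\wedge(v\vee w)\wedge u=(u\wedge v\wedge u)\vee(u\wedge w\wedge u)$ distributes over a join sitting in the \emph{middle} slot, whereas in $a\wedge x\wedge a$ the join $a$ occupies the \emph{outer} slots; the best you can do is set $z:=a\wedge x\wedge a$, note $z\le a$, and expand $z=z\wedge a\wedge z=\bigvee_i\,(z\wedge x_i\wedge z)$. But each summand $z\wedge x_i\wedge z$ is then $\le z$ and only $\preceq x_i$; your claimed inequality $a\wedge x_i\wedge a\le x_i$ is false in general, since $a\wedge x_i\wedge a$ is $\DD$-related to $x_i$ (because $x_i\preceq a$), so by Proposition~\ref{order_dclass} that inequality would force $a\wedge x_i\wedge a=x_i$, i.e.\ $x_i\le a$, which need not hold. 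Thus no element of $X$ is exhibited above the summands in the partial order and membership in $T$ does not follow. Moreover the defect cannot be repaired from the sandwich identity alone: the right-handed skew lattice $\mathbf{NC}_5^R$ of Figure~\ref{fig_nc5L} satisfies $x\wedge(y\vee z)\wedge x=(x\wedge y\wedge x)\vee(x\wedge z\wedge x)$ (for right-handed algebras this is the identity $(y\vee z)\wedge x=(y\wedge x)\vee(z\wedge x)$, which one verifies on the Cayley tables), yet for $X=\{a,c\}$ the right-hand set of the proposition is $\{0,a,c,1\}$, while $b\le 1=a\vee c$ forces $b\in X^{\downarrow^{*}}$. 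So the hypothesis must be read as the two-sided identities $x\wedge(y\vee z)=(x\wedge y)\vee(x\wedge z)$ and $(y\vee z)\wedge x=(y\wedge x)\vee(z\wedge x)$ of \cite{Ba11}, which $\mathbf{NC}_5^R$ fails (e.g.\ $b\wedge(a\vee c)=b\neq a=(b\wedge a)\vee(b\wedge c)$); under these one gets $z=a\wedge z=\bigvee_i\,(x_i\wedge z)$ for $z\le a$, and one still needs left-handedness or the normality and symmetry that this stronger hypothesis entails to place each $x_i\wedge z$ below an element of $X$. Your closing remark that ``symmetry/normality is not needed, only the $\wedge$-distributive identity'' is exactly the point at which the argument cannot be completed.
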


\begin{corollary}
Let $\mathbf{S}$ be a $\wedge$-distributive skew lattice and $x\in S$. Then,
$$x^{\downarrow ^{*}}=\set{x_{1}\vee \dots \vee x_{n}\,:\, \text{ for all } i\leq n\, \text{ and } x_{i}\leq x}.$$
\end{corollary}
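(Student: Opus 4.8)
The plan is to derive this as a direct specialization of Proposition~\ref{skewdi}. The corollary is exactly the case $X = \{x\}$, so the main work is to check that the description of $X^{\downarrow^{*}}$ collapses to the stated form when $X$ is a singleton. First I would invoke Proposition~\ref{skewdi} with $X = \{x\}$, obtaining
\[
x^{\downarrow^{*}} = \set{x_{1}\vee \dots \vee x_{n} \,:\, \text{for all } i\leq n\ \exists\, y_{i}\in\{x\}\text{ such that } x_{i}\leq y_{i}}.
\]
Since the only element of $\{x\}$ is $x$ itself, the condition ``$\exists y_{i}\in\{x\}$ with $x_{i}\leq y_{i}$'' is equivalent to ``$x_{i}\leq x$''. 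Substituting this in yields precisely the claimed set
\[
\set{x_{1}\vee \dots \vee x_{n}\,:\, \text{for all } i\leq n\ \text{and } x_{i}\leq x},
\]
which completes the argument.

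There is essentially no obstacle here: the proof is a one-line quotation of the preceding proposition followed by the trivial observation that membership in a singleton is equality. The only point deserving a sentence of care is that the corollary inherits the hypothesis that $\mathbf{S}$ is $\wedge$-distributive, which is needed for Proposition~\ref{skewdi} to apply; I would make sure that assumption is carried over explicitly. One could alternatively remark that, in this singleton case, the set on the right simplifies even further by Proposition~\ref{sxs}(ii): each $x_i \leq x$ means $x_i \in x^{\downarrow^*} = x \wedge S \wedge x$, and a finite join of such elements lies back in $x \wedge S \wedge x$ (as shown in the proof of Proposition~\ref{sxs}, $x\wedge S\wedge x$ is closed under $\vee$), so the right-hand side is just $x\wedge S\wedge x$ again — consistent with Proposition~\ref{sxs}(ii). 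But since the purpose of the corollary is to record the ``join of pieces below $x$'' description specialized from the general formula, I would keep the proof to the short substitution argument and, if desired, add the consistency remark as a closing sentence.

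\begin{proof}
Apply Proposition~\ref{skewdi} with $X=\set{x}$, which is legitimate since $\mathbf{S}$ is $\wedge$-distributive. For each $i$, the requirement that there exist $y_{i}\in\set{x}$ with $x_{i}\leq y_{i}$ is simply the requirement that $x_{i}\leq x$. Hence
\[
x^{\downarrow^{*}} = \set{x_{1}\vee\dots\vee x_{n}\,:\,\text{for all }i\leq n\text{ and }x_{i}\leq x},
\]
as claimed.
\end{proof}
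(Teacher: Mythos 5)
Your proof is correct and is exactly the intended derivation: the paper states this corollary without proof as the immediate specialization of Proposition~\ref{skewdi} to the singleton $X=\set{x}$, which is precisely the substitution you carry out. Nothing further is needed.
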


\begin{proposition}\label{antecip2}
Let $\mathbf{S}$ be a skew lattice $X$ a nonempty subset of $S$ and $a, b\in S$. Then,

\begin{itemize}
\item[$i)$] $x^{\downarrow ^{*}} \cap \DD_x=\set{x}$ and $x^{\uparrow ^{*}} \cap \DD_x=\set{x}$.  
\item[$ii)$] $x^{\downarrow ^{*}} =y^{\downarrow ^{*}} \Leftrightarrow x=y$ and $x^{\uparrow ^{*}} =y^{\uparrow ^{*}} \Leftrightarrow x=y$.
\end{itemize}

\end{proposition}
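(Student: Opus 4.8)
The plan is to reduce both parts to two facts already available: Proposition \ref{sxs}$(ii)$, which identifies the principal skew ideal with the down-set $x^{\downarrow^*}=\{y\in S:y\leq x\}$ and the principal skew filter with the up-set $x^{\uparrow^*}=\{y\in S:x\leq y\}$; and Proposition \ref{order_dclass}, which states that every $\DD$-class is an antichain for the natural partial order, i.e. $x\geq y$ and $x\DD y$ together force $x=y$.

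For part $(i)$, I would first observe that $x$ itself lies in $x^{\downarrow^*}\cap\DD_x$, since $x\leq x$ by reflexivity and $x\DD x$ trivially; this gives the inclusion $\{x\}\subseteq x^{\downarrow^*}\cap\DD_x$. For the reverse inclusion, take any $y\in x^{\downarrow^*}\cap\DD_x$. By Proposition \ref{sxs}$(ii)$ this means $y\leq x$, while $y\DD x$ holds by hypothesis, so Proposition \ref{order_dclass} yields $y=x$. The assertion $x^{\uparrow^*}\cap\DD_x=\{x\}$ is proved dually, using $x^{\uparrow^*}=\{y\in S:x\leq y\}$ together with Proposition \ref{order_dclass} in the form $y\geq x$ and $y\DD x$ imply $y=x$.

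For part $(ii)$, the forward implication $x=y\Rightarrow x^{\downarrow^*}=y^{\downarrow^*}$ is immediate. Conversely, assume $x^{\downarrow^*}=y^{\downarrow^*}$. Since $x\in x^{\downarrow^*}=y^{\downarrow^*}$ we get $x\leq y$, and symmetrically $y\leq x$; then either antisymmetry of $\leq$, or the combination of Proposition \ref{preorder} (so $x\preceq y$ and $y\preceq x$, hence $x\DD y$) with Proposition \ref{order_dclass} applied to $x\leq y$ and $x\DD y$, gives $x=y$. The filter statement $x^{\uparrow^*}=y^{\uparrow^*}\Rightarrow x=y$ is handled in the same way, deducing $x\geq y$ and $y\geq x$. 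I do not foresee any genuine obstacle: the whole argument is carried by the antichain property of $\DD$-classes, and the only point requiring care is citing the correct (dual) form of Propositions \ref{sxs} and \ref{order_dclass} in the filter cases.
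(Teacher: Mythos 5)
Your argument is correct and follows essentially the same route as the paper: part $(i)$ is the antichain property of $\DD$-classes (Proposition \ref{order_dclass}) applied to an element $a\leq x$ with $a\Drel x$, and part $(ii)$ extracts $x\leq y$ and $y\leq x$ from the equality of principal skew ideals and concludes by antisymmetry. Your write-up merely makes explicit the appeals to Proposition \ref{sxs}$(ii)$ and the trivial inclusion $\set{x}\subseteq x^{\downarrow ^{*}}\cap\DD_x$ that the paper leaves implicit.
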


\begin{proof}  
$(i)$ Let $a\in x^{\downarrow ^{*}} \cap \DD_x$. Then, $a\leq x$ and $a\Drel x$ so that $a=x$.

$(ii)$ Suppose that $x^{\downarrow ^{*}}=y^{\downarrow ^{*}}$. Then, $x\leq y$, $y\leq x$ from which we get $x=y$. 
\end{proof}

\begin{proposition}\label{disjointunion}
Let $\mathbf S$ be a skew lattice. 
For all $x\in S$, the principal ideal $ x^{\downarrow }$ is the union of all principal skew ideals $y^{\downarrow ^{*}}$ such that $y\in \DD_{x}$.
\end{proposition}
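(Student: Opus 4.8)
The plan is to prove the set equality
$$x^{\downarrow }=\bigcup\set{y^{\downarrow ^{*}}:y\in\DD_{x}}$$
by establishing the two inclusions, relying throughout on the explicit descriptions from Proposition \ref{sxs}: $x^{\downarrow }=\set{z\in S:z\preceq x}$ and $y^{\downarrow ^{*}}=\set{z\in S:z\leq y}$. Note first that the right-hand side is a union over the nonempty index set $\DD_{x}$ (which contains $x$, and $x\in x^{\downarrow ^{*}}$), so there is nothing degenerate to worry about.

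For the inclusion $\supseteq$, fix $y\in\DD_{x}$. Since $\DD_{y}=\DD_{x}$, Proposition \ref{antecip}$(ii)$ gives $y^{\downarrow }=x^{\downarrow }$, and the corollary following Proposition \ref{sxs} gives $y^{\downarrow ^{*}}=y\wedge S\wedge y\subseteq S\wedge y\wedge S=y^{\downarrow }$. Combining these, $y^{\downarrow ^{*}}\subseteq x^{\downarrow }$, and taking the union over all $y\in\DD_{x}$ yields one inclusion. (Alternatively, argue pointwise: $z\leq y$ implies $z\preceq y$ by Proposition \ref{preorder}, and $y\Drel x$ implies $y\preceq x$, so $z\preceq x$ by transitivity of the preorder.) For the inclusion $\subseteq$, let $w\in x^{\downarrow }$, i.e.\ $w\preceq x$; the element to use is $y:=w\vee x\vee w$. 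By Proposition \ref{list_order}$(ii)$ (with the roles of the variables played by $w$ and $x$) we have $w\leq w\vee x\vee w=y$, so $w\in y^{\downarrow ^{*}}$. It remains to check $y\in\DD_{x}$: since $w\preceq x$ we have $x\vee w\vee x=x$, hence applying the quotient homomorphism $S\to\mathbf S/\DD$ and using that $\vee$ is commutative and idempotent in the lattice $\mathbf S/\DD$ gives $\DD_{w}\vee\DD_{x}=\DD_{x}$, and therefore $\DD_{y}=\DD_{w}\vee\DD_{x}\vee\DD_{w}=\DD_{x}$. Thus $w\in y^{\downarrow ^{*}}$ with $y\in\DD_{x}$, completing this inclusion.

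The only genuinely delicate point is verifying $w\vee x\vee w\in\DD_{x}$; everything else is a direct citation. The cleanest justification is the one above, projecting the identity $x\vee w\vee x=x$ to the lattice image $\mathbf S/\DD$ (using that $\DD$ is a congruence, by Leech's First Decomposition Theorem). One can equally well avoid the explicit formula entirely: from $w\preceq x$ one deduces $\DD_{w}\leq\DD_{x}$ in $\mathbf S/\DD$ exactly as above, and then Proposition \ref{up} furnishes directly some $y\in\DD_{x}$ with $y\geq w$, so that $w\in y^{\downarrow ^{*}}$. Either route closes the argument.
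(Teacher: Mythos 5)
Your proof is correct, and the ``alternative'' you sketch at the end is in fact exactly the paper's proof: the paper handles the inclusion $x^{\downarrow}\subseteq\bigcup\set{y^{\downarrow^{*}}:y\in\DD_x}$ by observing that $a\preceq x$ gives $\DD_a\leq\DD_x$ and then citing Proposition \ref{up} to produce $y\in\DD_x$ with $y\geq a$; the reverse inclusion is argued pointwise via $a\leq y\preceq x$, just as in your parenthetical remark. Your primary route differs only in the key step: instead of invoking the existence statement of Proposition \ref{up}, you exhibit the witness explicitly as $y=w\vee x\vee w$, getting $w\leq y$ from Proposition \ref{list_order}$(ii)$ and checking $y\Drel x$ by projecting $x\vee w\vee x=x$ to the lattice image $\mathbf S/\DD$. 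This is a sound and slightly more constructive variant --- it makes the proof self-contained modulo the order lemmas, at the cost of a small computation in the quotient --- but it buys nothing essentially new, since Proposition \ref{up} is itself proved by essentially this construction. Both versions correctly cover the degenerate case $w\in\DD_x$ (where $w\vee x\vee w=w$), so there is no gap.
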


\begin{proof}
Let $x\in S$.
Due to proposition \ref{antecip} $(ii)$, $x^{\downarrow }=\bigcup \DD_{x}^{\downarrow}$.  
Whenever $a\in x^{\downarrow }$ and $a\notin \DD_{x}$, then $\DD_{a}\leq \DD_{x}$. proposition \ref{up} implies the existence of $y\in \DD_{x}$ such that $y\geq a$. Thus, $a\in y^{\downarrow ^{*}}$.
On the other hand, if $a\in y^{\downarrow ^{*}}$ with $y\DD x$, then $a\leq y\preceq x$ so that $a\in x^{\downarrow}$.
\end{proof}

\begin{proposition}\label{disjointunion2}
Let $\mathbf{S}$ be a skew lattice and $x, y\in S$ such that $x\Drel y$. Then, $|x^{\downarrow ^*}|=|y^{\downarrow ^*}|$.
\end{proposition}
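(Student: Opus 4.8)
The plan is to prove the equality of cardinalities by producing an explicit bijection between $x^{\downarrow^*}$ and $y^{\downarrow^*}$. Recall from Proposition \ref{sxs}$(ii)$ that $x^{\downarrow^*}=\{z\in S: z\leq x\}$ and $y^{\downarrow^*}=\{w\in S: w\leq y\}$. The natural candidates for the two mutually inverse maps are the ``conjugation'' maps
$$\varphi\colon x^{\downarrow^*}\to S,\quad \varphi(z)=y\wedge z\wedge y,\qquad\qquad \psi\colon y^{\downarrow^*}\to S,\quad \psi(w)=x\wedge w\wedge x.$$

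First I would check these maps land where they should. By Proposition \ref{list_order}$(ii)$ we always have $y\wedge z\wedge y\leq y$, so $\varphi$ maps $x^{\downarrow^*}$ into $y^{\downarrow^*}$, and symmetrically $\psi$ maps $y^{\downarrow^*}$ into $x^{\downarrow^*}$. Then comes the core computation: I claim $\psi\circ\varphi=\mathrm{id}$ on $x^{\downarrow^*}$. Given $z\leq x$, Lemma \ref{order_id} gives $z=x\wedge z\wedge x$; and since $x\Drel y$, the Green's-relation description recorded in the Preliminaries (the $\wedge$-reduct being a band, so $x\Drel y$ amounts to $x\wedge y\wedge x=x$ and $y\wedge x\wedge y=y$) yields $x\wedge y\wedge x=x$. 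Hence, using only associativity,
$$\psi(\varphi(z))=x\wedge y\wedge z\wedge y\wedge x=x\wedge y\wedge x\wedge z\wedge x\wedge y\wedge x=(x\wedge y\wedge x)\wedge z\wedge(x\wedge y\wedge x)=x\wedge z\wedge x=z.$$
Symmetrically, using $y\wedge x\wedge y=y$ and $w=y\wedge w\wedge y$ for $w\leq y$, one gets $\varphi\circ\psi=\mathrm{id}$ on $y^{\downarrow^*}$. Therefore $\varphi$ is a bijection with inverse $\psi$, and $|x^{\downarrow^*}|=|y^{\downarrow^*}|$.

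I do not expect a genuine obstacle here: the only thing to spot is the right pair of maps, and once conjugation by $y$ (respectively by $x$) is on the table, everything collapses to associativity together with the two identities $x\wedge y\wedge x=x$ and $y\wedge x\wedge y=y$ encoding $x\Drel y$. The one point that warrants a moment's care is that $z$ need not lie in $\DD_x$, so one cannot apply rectangularity of a $\DD$-class directly to $z$; this is exactly why the argument first rewrites $z$ as $x\wedge z\wedge x$ (via $z\leq x$) and only afterwards absorbs the flanking copies of $y$ into the copies of $x$. It is also worth noting that the hypothesis is genuinely used: it is $x\wedge y\wedge x=x$ (not merely $x\preceq y$) that makes the middle of the computation work, which is consistent with Proposition \ref{antecip2}$(ii)$ showing that distinct $\DD$-related elements generate distinct principal skew ideals even though they have the same cardinality.
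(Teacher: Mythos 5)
Your proof is correct and follows essentially the same route as the paper: both exhibit the conjugation maps $a\mapsto y\wedge a\wedge y$ and $b\mapsto x\wedge b\wedge x$ and verify they are mutually inverse. The only cosmetic difference is that the paper collapses the middle of the computation by invoking regularity of the $\wedge$-reduct, while you achieve the same effect by first rewriting $z=x\wedge z\wedge x$ and then using associativity together with $x\wedge y\wedge x=x$.
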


\begin{proof}
Consider the maps $\phi: x\downarrow ^*\rightarrow y\downarrow ^*$ and $\varphi: y\downarrow ^*\rightarrow x\downarrow ^*$ defined by $\phi(a)= y\wedge a\wedge y$ and $\varphi(b)=x\wedge b\wedge x$, for every $a\in x\downarrow ^*$ and $b\in y\downarrow ^*$.
Both of these maps are clearly well defined. Let us see that they are the inverse of each other.
Let $c\in y\downarrow ^*$. Then, $$\phi \circ  \varphi(c)= y\wedge x\wedge c\wedge x\wedge y=y\wedge x\wedge y\wedge c\wedge y\wedge x\wedge y=y\wedge c\wedge y=c$$ 
due to regularity and the assumption that $x\DD y$.

Thus, $\phi \circ \varphi = \Delta_{y\downarrow ^*}$ and, similarly, $\varphi \circ \phi = \Delta_{x\downarrow ^*}$.
\end{proof}

\begin{corollary}\label{index_ideal}
Let $\mathbf{S}$ be a skew lattice and $x\in S$.
Then, $$|x^{\downarrow} |\leq |\DD_{x}|.|x^{\downarrow ^*}|.$$ 
\end{corollary}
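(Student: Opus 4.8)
The plan is to read off the bound directly from the two immediately preceding results. By Proposition \ref{disjointunion}, the principal ideal $x^{\downarrow}$ decomposes as the union
$$x^{\downarrow} = \bigcup_{y\in \DD_x} y^{\downarrow^*}$$
of principal skew ideals indexed by the elements of $\DD_x$. So the first step is simply to invoke that decomposition and observe that $x^{\downarrow}$ is covered by a family of $|\DD_x|$ sets.

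Next I would control the size of each member of that family. For every $y\in\DD_x$ we have $y\Drel x$, so Proposition \ref{disjointunion2} gives $|y^{\downarrow^*}| = |x^{\downarrow^*}|$. Hence $x^{\downarrow}$ is a union of $|\DD_x|$ sets, each of cardinality $|x^{\downarrow^*}|$. The cardinality of a union of $\kappa$ sets each of cardinality $\lambda$ is at most $\kappa\cdot\lambda$ (in the finite case this is the elementary counting bound $|\bigcup_{i} A_i|\le\sum_i |A_i|$; in general it is the standard cardinal-arithmetic estimate for unions), which yields exactly
$$|x^{\downarrow}| \le |\DD_x|\cdot|x^{\downarrow^*}|,$$
as claimed.

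There is essentially no hard step here; the only point that requires a word of care is why the inequality need not be an equality. The sets $y^{\downarrow^*}$ for distinct $y\in\DD_x$ are generally not pairwise disjoint — for instance an element $a$ with $\DD_a<\DD_x$ may satisfy $a\le y$ and $a\le y'$ for two different $y,y'\in\DD_x$ (compare the discussion of Proposition \ref{up}) — so one cannot upgrade the covering to a partition and the bound is genuinely an inequality. I would flag this briefly after the computation, and note that in the special case where $\DD_x=\{x\}$ (or more generally when the skew ideals $y^{\downarrow^*}$ happen to partition $x^{\downarrow}$) the bound becomes an equality.
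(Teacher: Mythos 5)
Your proof is correct and is exactly the argument the paper intends: the corollary is stated without proof precisely because it follows immediately from Proposition \ref{disjointunion} (the covering of $x^{\downarrow}$ by the sets $y^{\downarrow^{*}}$, $y\in\DD_x$) combined with Proposition \ref{disjointunion2} (all these sets have cardinality $|x^{\downarrow^{*}}|$), plus the elementary bound on the cardinality of a union. Your additional remark on why the covering need not be a partition, and hence why the bound is only an inequality, is accurate and a worthwhile clarification.
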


The following results show us how principal ideals and principal skew ideals are familiar algebras and strengthen the relation between the study of ideals and the study of normality.

\begin{proposition}\cite{Le92}\label{normalid}
A skew lattice $\mathbf{S}$ is normal iff each sub skew lattice $x^{\downarrow ^*}$ is a sub lattice of $\mathbf{S}$. 
Dually, $\mathbf{S}$ is conormal iff each sub skew lattice $x^{\uparrow ^*}$ is a sub lattice of $\mathbf{S}$.
\end{proposition}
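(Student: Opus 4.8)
The plan is to prove both equivalences by unfolding the definitions: the crucial observation is that $x^{\downarrow^*} = x\wedge S\wedge x = \{y\in S : y\leq x\}$ by Proposition \ref{sxs}(ii), so $x^{\downarrow^*}$ is always a sub skew lattice (Proposition \ref{id-sub}), and the only thing that can fail is commutativity of $\wedge$ (equivalently $\vee$) on it. Thus ``$x^{\downarrow^*}$ is a sublattice'' says precisely that $u\wedge v = v\wedge u$ whenever $u,v\leq x$.

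First I would prove the forward direction. Assume $\mathbf S$ is normal, i.e. $x\wedge y\wedge z\wedge w = x\wedge z\wedge y\wedge w$ for all $x,y,z,w$. Fix $x\in S$ and take $u,v\in x^{\downarrow^*}$, so $u = x\wedge u\wedge x$ and $v = x\wedge v\wedge x$ by Proposition \ref{order_id}/Proposition \ref{sxs}(ii). Then I would compute $u\wedge v = (x\wedge u\wedge x)\wedge(x\wedge v\wedge x) = x\wedge u\wedge v\wedge x$ (using idempotency to collapse the middle $x\wedge x$), and likewise $v\wedge u = x\wedge v\wedge u\wedge x$; normality applied to the four-variable identity gives $x\wedge u\wedge v\wedge x = x\wedge v\wedge u\wedge x$, hence $u\wedge v = v\wedge u$. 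So $x^{\downarrow^*}$ is a commutative sub skew lattice, i.e. a sublattice.

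For the converse, assume every $x^{\downarrow^*}$ is a sublattice and let $a,b,c,d\in S$ be arbitrary; I want $a\wedge b\wedge c\wedge d = a\wedge c\wedge b\wedge d$. The idea is to work inside $x^{\downarrow^*}$ for a well-chosen $x$. Set $u = a\wedge b\wedge a$ and $v = a\wedge c\wedge a$; by Proposition \ref{list_order}(ii) both $u,v\leq a$, so $u,v\in a^{\downarrow^*}$, and by hypothesis $u\wedge v = v\wedge u$. One then massages $u\wedge v = a\wedge b\wedge a\wedge a\wedge c\wedge a = a\wedge b\wedge a\wedge c\wedge a$ and similarly $v\wedge u = a\wedge c\wedge a\wedge b\wedge a$, obtaining $a\wedge b\wedge a\wedge c\wedge a = a\wedge c\wedge a\wedge b\wedge a$. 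To pass from this ``sandwiched'' identity to the genuine normality identity $a\wedge b\wedge c\wedge d = a\wedge c\wedge b\wedge d$, I would invoke the standard fact that in any skew lattice normality is equivalent to its local form $x\wedge y\wedge x = x\wedge z\wedge x$-type identities holding on principal subalgebras — more concretely, one uses regularity of the band $(S,\wedge)$ together with the observation that for any $y$, replacing $y$ by $a\wedge y\wedge a$ and using regularity $a\wedge y\wedge z\wedge w = a\wedge(a\wedge y\wedge a)\wedge(a\wedge z\wedge a)\wedge w$ reduces the general identity to the sandwiched one. The dual statement for $x^{\uparrow^*}$ and conormality follows by the $\vee$-$\wedge$ duality of skew lattices, so I would simply remark that the proof dualizes.

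The main obstacle is the converse direction: getting from commutativity on the sandwiched elements $a\wedge b\wedge a$, $a\wedge c\wedge a$ inside $a^{\downarrow^*}$ back to the four-variable normality identity on all of $\mathbf S$. This requires careful use of the regularity identities for the reduct band $(S,\wedge)$ (namely $x\wedge u\wedge x\wedge v\wedge x = x\wedge u\wedge v\wedge x$ and the absorption relating $a\wedge y$ to $a\wedge y\wedge a$), and it is the step where I would expect to spend the bulk of the bookkeeping; everything in the forward direction is a short idempotency-plus-normality calculation. Since this is Leech's result cited as \cite{Le92}, I would ultimately defer the delicate regularity manipulation to that reference if the inline computation becomes unwieldy.
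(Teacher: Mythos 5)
The paper offers no proof of this proposition --- it is quoted from \cite{Le92} --- so your attempt can only be judged on its own terms. Your forward direction is fine: for $u=x\wedge u\wedge x$ and $v=x\wedge v\wedge x$ in $x^{\downarrow^*}$, regularity and normality give $u\wedge v=x\wedge u\wedge v\wedge x=x\wedge v\wedge u\wedge x=v\wedge u$, and commutativity of $\wedge$ on a subalgebra forces commutativity of $\vee$ there as well, so $x^{\downarrow^*}$ is a sublattice. The converse is also sound up to the point where you obtain the ``sandwiched'' identity $a\wedge b\wedge c\wedge a=a\wedge c\wedge b\wedge a$ from commutativity of $a\wedge b\wedge a$ and $a\wedge c\wedge a$ in $a^{\downarrow^*}$ together with regularity.

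The gap is the last step, and the specific mechanism you propose for it is wrong. The identity $a\wedge y\wedge z\wedge w=a\wedge(a\wedge y\wedge a)\wedge(a\wedge z\wedge a)\wedge w$ is not a consequence of regularity: regularity reduces the right-hand side only to $a\wedge y\wedge z\wedge a\wedge w$, and deleting that inner $a$ is itself an instance of normality, so the reduction is circular. It genuinely fails in non-normal skew lattices; in $\mathbf{NC}_5^R$ one has $a\wedge 1\wedge b\wedge 1=b$ while $a\wedge(a\wedge 1\wedge a)\wedge(a\wedge b\wedge a)\wedge 1=a$. Passing from the three-variable identity $x\wedge y\wedge z\wedge x=x\wedge z\wedge y\wedge x$ to the four-variable one is precisely the nontrivial content here: it is the band-theoretic fact that this identity axiomatizes normal bands (strong semilattices of rectangular bands, in which a product of four factors depends only on the outer two and the $\DD$-class of the product), and it does not follow from regularity alone. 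A cleaner way to close the converse inside this paper's toolkit is via cosets: a sublattice meets each $\DD$-class in at most one point, so commutativity in $a^{\downarrow^*}$ forces $a\wedge b\wedge a=a\wedge b'\wedge a$ whenever $b\Drel b'$; hence every image set $a\wedge B\wedge a$ is a singleton, Theorem \ref{coset_part} then makes $B$ a single coset of $A$ in $B$ for every comparable pair $A>B$, and Proposition \ref{cs_normal} yields normality. As it stands, your key step is supported by a false identity and otherwise deferred to the reference, so the converse is not established.
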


\begin{corollary}\cite{Le92}\label{normz}
Let $\mathbf{S}$ be a normal skew lattice. Then, $\mathbf{S}$ is quasi-distributive if, and only if, for all $x\in S$, $x^{\downarrow ^{*}}$ is distributive.
\end{corollary}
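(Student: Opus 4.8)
The plan is to prove the two implications separately, using Proposition \ref{normalid} (normality turns each $x^{\downarrow^*}$ into a sublattice), Proposition \ref{up}, and the fact recalled in Section \ref{Classical Ideals and Skew Ideals}'s preliminaries that a \emph{normal} skew lattice is distributive if and only if it is quasi-distributive (cf.\ \cite{Le96}).

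For the implication ``$\mathbf S$ quasi-distributive $\Rightarrow$ each $x^{\downarrow^*}$ distributive'': since $\mathbf S$ is normal and quasi-distributive, it is distributive, so it satisfies the two skew-distributive identities, and these pass to the sub skew lattice $x^{\downarrow^*}$ (which is a sub skew lattice by Proposition \ref{id-sub}). By Proposition \ref{normalid}, $x^{\downarrow^*}$ is in fact a sublattice, hence commutative; on it $u\wedge v\wedge u = u\wedge v$ and $u\vee v\vee u = u\vee v$, so the skew-distributive identities collapse to the ordinary lattice distributive laws. Thus $x^{\downarrow^*}$ is a distributive lattice.

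For the converse, assume every $x^{\downarrow^*}$ is distributive. Fix $a\in S$ and restrict the quotient epimorphism $\pi\colon \mathbf S\to \mathbf S/\DD$ to $a^{\downarrow^*}=\{y\in S:y\leq a\}$; since $a^{\downarrow^*}$ is a sublattice, $\pi|_{a^{\downarrow^*}}$ is a lattice homomorphism. I claim its image is the principal ideal $(\DD_a]$ of $\mathbf S/\DD$: if $y\leq a$ then $y\preceq a$ by Proposition \ref{preorder}, so $\DD_y\leq \DD_a$; conversely, if $\DD_c\leq \DD_a$, then by Proposition \ref{up} there is $c'\in\DD_c$ with $c'\leq a$, so $c'\in a^{\downarrow^*}$ and $\pi(c')=\DD_c$. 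Hence $(\DD_a]$, being a homomorphic image of the distributive lattice $a^{\downarrow^*}$, is distributive for every $a\in S$. Now any three elements of $\mathbf S/\DD$ lie below their join $\DD_{a\vee b\vee c}$, hence inside the distributive principal ideal $(\DD_{a\vee b\vee c}]$, so the distributive law holds for them; therefore $\mathbf S/\DD$ is distributive, i.e.\ $\mathbf S$ is quasi-distributive.

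The step requiring the most care is the identification of $\pi(a^{\downarrow^*})$ with the \emph{full} principal ideal $(\DD_a]$: the inclusion $\pi(a^{\downarrow^*})\subseteq(\DD_a]$ is immediate from Proposition \ref{preorder}, but surjectivity onto all of $(\DD_a]$ is exactly where Proposition \ref{up} is used, and normality is what guarantees $a^{\downarrow^*}$ is a lattice so that $\pi|_{a^{\downarrow^*}}$ is a genuine lattice homomorphism. The remaining observation---that global distributivity of $\mathbf S/\DD$ follows once every principal ideal is distributive, because any three elements sit inside one such ideal---is routine.
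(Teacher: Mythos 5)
The paper states this corollary as a cited result from \cite{Le92} and gives no proof of its own, so there is nothing internal to compare against; judged on its own terms, your argument is correct and complete. The forward direction legitimately uses the fact recorded in the preliminaries that a normal skew lattice is distributive iff it is quasi-distributive, and the observation that the skew-distributive identities, restricted to the sublattice $x^{\downarrow^{*}}$ (Proposition~\ref{normalid}), collapse to ordinary lattice distributivity. The converse is the part that needs an actual idea, and yours works: normality makes $a^{\downarrow^{*}}$ a genuine lattice, Proposition~\ref{up} gives surjectivity of $\pi|_{a^{\downarrow^{*}}}$ onto the principal ideal $(\DD_a]$ of $\mathbf S/\DD$, distributivity is an identity and so passes to this homomorphic image, and any three elements of $\mathbf S/\DD$ lie in the principal ideal below their join. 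The only point worth making explicit in a final write-up is that the lattice operations on the image $\pi(a^{\downarrow^{*}})$ are the restrictions of those of $\mathbf S/\DD$ (immediate, since $\pi$ is the restriction of the quotient homomorphism), so that distributivity of the image really is distributivity of $(\DD_a]$ as a sublattice of $\mathbf S/\DD$.
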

 
Normal skew lattices were further studied in \cite{Le90} and \cite{Le92} and, due to proposition \ref{normalid}, are sometimes cited as \textit{local lattices} \cite{Le93} or as \textit{mid commutative skew lattices} \cite{Le92}. 
The \emph{center} of $\mathbf{S}$ is the sub skew lattice $Z(S)$ formed by the union of all its singleton $D$-classes. $Z(S)$ is always a normal sub skew lattice of  $\mathbf S$. Moreover, the center $Z(\mathbf{S})$ of a normal skew lattice $\mathbf S$, if nonempty, corresponds to an ideal in $\mathbf{S}/\DD$. 
In particular, whenever the lattice image $\mathbf{S}/\DD$ of $\mathbf{S}$ has minimal elements, $Z(\mathbf{S})$ is empty precisely when those minimal classes of $\mathbf{S}$ are nontrivial.s \cite{Le92}.

\begin{proposition}\cite{Le90}    
If a normal skew lattice $\mathbf{S}$ has top $\DD$-class with maximal element $m$, then $S=m^{\downarrow }$. Furthermore, the sub lattice $m^{\downarrow ^{*}}$ is a lattice section of the underlying lattice $\mathbf{S}/\DD$ in $\mathbf{S}$.
\end{proposition}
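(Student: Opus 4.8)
The plan is to prove both parts by combining normality (via Proposition~\ref{normalid}) with the structural facts about principal ideals already established, especially Proposition~\ref{disjointunion} and Proposition~\ref{antecip}.

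\textbf{First claim: $S=m^{\downarrow}$.} By Proposition~\ref{antecip}(i) it suffices to show that $\mathbf S/\DD = \DD_m^{\downarrow}/\DD$, i.e.\ that $\DD_m$ is the top $\DD$-class of $\mathbf S$; but this is exactly the hypothesis. More concretely: let $x\in S$. Since $\DD_m$ is the top $\DD$-class, $\DD_x \leq \DD_m$, so by Proposition~\ref{preorder} we get $x\preceq m$ (taking $A=\DD_m$, $B=\DD_x$, using $B\leq A$), whence $x\in m^{\downarrow}$ by Proposition~\ref{sxs}(i). The reverse inclusion is trivial, so $S=m^{\downarrow}$. (Here I am using that $\DD_m$ really is comparable to — indeed above — every other $\DD$-class, which is what ``top $\DD$-class'' means.)

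\textbf{Second claim: $m^{\downarrow^*}$ is a lattice section.} First, by Proposition~\ref{normalid}, normality guarantees that the sub skew lattice $m^{\downarrow^*}$ is in fact a \emph{sublattice} of $\mathbf S$. It remains to check that $m^{\downarrow^*}$ meets every $\DD$-class of $\mathbf S$ in exactly one point. By Proposition~\ref{antecip2}(i), $m^{\downarrow^*}\cap \DD_m = \{m\}$, and more generally any sublattice meets each $\DD$-class in at most one point (stated in the Preliminaries). So the work is to show $m^{\downarrow^*}$ meets each $\DD$-class in at least one point. Fix an arbitrary $\DD$-class, with representative $a\in S$. Since $\DD_m$ is the top class, $\DD_a \leq \DD_m$, so Proposition~\ref{up} (applied with $A=\DD_m \geq \DD_a = B$, using the ``for each $b\in B$ there is $a\in A$ with $a\geq b$'' direction) yields some element $a'\in \DD_a$ with $a'\leq m$. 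Then $a'\in m^{\downarrow^*}$ by Proposition~\ref{sxs}(ii), and $a'\DD a$, so $m^{\downarrow^*}$ meets $\DD_a$. Combining, $m^{\downarrow^*}$ meets each $\DD$-class in exactly one point, so it is a lattice section; as noted in the Preliminaries it is then isomorphic to $\mathbf S/\DD$, which justifies calling it a lattice section of $\mathbf S/\DD$ ``in $\mathbf S$''.

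\textbf{Main obstacle.} The genuinely new input is normality, used to upgrade $m^{\downarrow^*}$ from a sub skew lattice to an honest sublattice — everything else is bookkeeping with $\DD$-classes and the already-proven descriptions of principal (skew) ideals. So the only subtle point is making sure the ``exactly one point per $\DD$-class'' argument is airtight: the ``at most one'' half needs $m^{\downarrow^*}$ to be a sublattice (hence normality), and the ``at least one'' half needs Proposition~\ref{up}, which in turn needs $\DD_m$ comparable to the target class — guaranteed because $\DD_m$ is the top class. I would be careful to state explicitly that ``top $\DD$-class'' means it dominates all others, since Proposition~\ref{up} is only about \emph{comparable} classes. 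No hard estimates are involved.
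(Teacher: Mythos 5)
The paper states this proposition as a cited result from Leech's \emph{Skew Boolean algebras} and gives no proof of its own, so there is nothing internal to compare against; judged on its own terms, your argument is correct and uses exactly the right ingredients. The first claim indeed needs no normality: top class plus Proposition~\ref{preorder} gives $x\preceq m$ for all $x$, hence $S=m^{\downarrow}$ by Proposition~\ref{sxs}(i). For the second claim you correctly isolate normality as the one substantive input (via Proposition~\ref{normalid}, to make $m^{\downarrow^{*}}$ a genuine sublattice, whence ``at most one point per $\DD$-class'' from the Preliminaries), and comparability of $\DD_m$ with every class plus Proposition~\ref{up} gives ``at least one point.'' One small slip: you say you are using the ``for each $b\in B$ there is $a\in A$ with $a\geq b$'' half of Proposition~\ref{up}, but that clause produces an element of the \emph{upper} class above a given lower element, which is not what you need; what you actually need (and what your conclusion ``some $a'\in\DD_a$ with $a'\leq m$'' matches) is the other half, ``for each $a\in A$ there exists $b\in B$ with $a\geq b$,'' applied to the specific element $m\in\DD_m$. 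Since both halves are part of the same cited proposition, this is a mislabeling rather than a gap, and the proof stands once corrected.
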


In the beginning of the nineties, Leech used principal skew ideals when defining a skew Boolean algebra in his review paper \cite{Le96} as follows:  a skew Boolean algebra is any symmetric skew lattice with zero $(S;\wedge, \vee, 0)$ such that, for all $x\in S$, $x^{\downarrow ^{*}}$ is a Boolean algebra.

\section{Ideals and Cosets}

Consider a skew lattice $\mathbf{S}$ consisting of exactly two $\DD$-classes $A>B$. Given $b\in B$, the subset $A\wedge b\wedge A=\{a\wedge b\wedge a \,|\, a\in A \}$ of $B$ is said to be a \emph{coset} of $A$ in $B$ (or \emph{$A$-coset in $B$}). Similarly, a coset of $B$ in $A$ (or $B$-coset in $A$) is the subset $B\vee a\vee B=\{b\vee a\vee b \,|\, b\in B \}$ of $A$, for a fixed $a\in A$.  Given $a\in A$, the \emph{image set} of $a$ in $B$ is the set $$a\wedge B\wedge a = \set{a \wedge b\wedge a\,|\,b\in B}=\set{b\in B\,|\,b< a}.$$ Dually, given $b\in B$ the set $b\vee A\vee b = \set{a\in A:b<a}$ is the image set of $b$ in $A$.

Cosets are irrelevant in both the context of semigroup theory or lattice theory being something very specific to skew lattices. In fact, the coset structure reveals a new perspective that does not have a counterpart either in the theory of lattices or in the theory of bands of semigroups (cf. \cite{JPC12}). The following theorem gives us a further perspective on this decomposition.

\begin{theorem}\cite{Le93} \label{coset_part}
Let $\mathbf{S}$ be a skew lattice with comparable $\DD$-classes $A>B$. Then, $B$ is partitioned by the cosets of $A$ in $B$ and, dually, $A$ is partitioned by the cosets of $B$ in $A$. 
The image set of any element $a\in A$ in $B$ [$b\in B$ in $A$] is a transversal of the cosets of $A$ in $B$ [$B$ in $A$]. 
Furthermore, any coset $B\vee a\vee B$ of $B$ in $A$ is isomorphic to any coset $A\wedge b\wedge A$ of $A$ in $B$ under a natural bijection $\varphi $ defined implicitly by: 

\begin{center}
$x\in B\vee a\vee B \mapsto y\in A\wedge b\wedge A$ if and only if $x\geq y$. 
\end{center}

The operations $\wedge$ and $\vee$ in $A\cup B$ are determined jointly by these coset bijections and the rectangular structure of each $\DD$-class. 
\end{theorem}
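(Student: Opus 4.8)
The plan is to establish the four assertions of the theorem in the order they are stated, leaning on the primitive-skew-lattice structure theory already available. First I would fix the setup: $\mathbf S$ has $\DD$-classes $A>B$, and by Leech's First Decomposition Theorem each of $A,B$ is a rectangular skew lattice. For the partition claim, I would show that the relation on $B$ given by $b \mathrel\sim b'$ iff $A\wedge b\wedge A = A\wedge b'\wedge A$ is an equivalence whose classes are exactly the cosets. The nontrivial direction is that two cosets $A\wedge b\wedge A$ and $A\wedge b'\wedge A$ are either equal or disjoint; here I would use regularity of the band reduct $(S;\wedge)$ together with the rectangular structure of $B$ (so that $b' = (b'\wedge b\wedge b') $-type manipulations let one element of one coset be rewritten inside the other), concluding that if the cosets share a point they coincide. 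The dual statement for cosets of $B$ in $A$ follows by the $\wedge/\vee$ duality $x\wedge y = y\vee x$.

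Next, for the transversal claim, fix $a\in A$ and consider its image set $a\wedge B\wedge a = \{a\wedge b\wedge a : b\in B\} = \{b\in B : b\leq a\}$; the second description is immediate from Lemma~\ref{order_id} and regularity. By Proposition~\ref{up}, for each coset $C = A\wedge b_0\wedge A$ there is some element of $C$ below $a$, so the image set meets every coset; and I would show it meets each coset in exactly one point, because two distinct elements of the image set lie in the same $\DD$-class $B$ and, being both $\leq a$, any coincidence of cosets would force (via the rectangular structure of $B$ and uniqueness of the element of a given coset below $a$) that they are equal. So $a\wedge B\wedge a$ is a transversal, and dually for $b\vee A\vee b$.

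For the coset bijection $\varphi$, I would fix $a\in A$ and $b\in B$ with $b\leq a$, set the target coset $A\wedge b\wedge A$ and source coset $B\vee a\vee B$, and define $\varphi$ by the rule $x\mapsto y$ iff $x\geq y$; the content is that this is well-defined and bijective. Well-definedness: given $x\in B\vee a\vee B\subseteq A$, its image set $x\wedge B\wedge x$ meets each coset of $A$ in $B$ in exactly one point (by the transversal result applied to $x$ in place of $a$), so in particular there is a unique $y\in A\wedge b\wedge A$ with $y\leq x$. Running the same argument from the $B$-side (using the image set of $y$ in $A$, which is a transversal of the cosets of $B$ in $A$) gives a unique $x'\in B\vee a\vee B$ with $y\leq x'$, and one checks $x'=x$; this yields a two-sided inverse, hence a bijection. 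That $\varphi$ is a skew-lattice isomorphism between the two (rectangular) cosets would be checked by verifying it respects $\wedge$ and $\vee$ coordinatewise against the rectangular coordinates — this is where I would invoke that both cosets, being subsets of single $\DD$-classes, are rectangular, and that the $\geq$-relation is compatible with the $\RR$- and $\LL$-coordinates.

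The final assertion — that $\wedge$ and $\vee$ on $A\cup B$ are recovered from the coset bijections together with the rectangular structures — I would handle last, and I expect it to be the main obstacle. The claim to prove is: for $a\in A$ and $b\in B$, $a\wedge b$ (and $a\vee b$, $b\wedge a$, $b\vee a$) is determined by knowing each $\DD$-class as a rectangular skew lattice and knowing the maps $\varphi$. The idea is that $a\wedge b$ lies in $B$; it lies in the unique $A$-coset of $B$ determined by $b$ (namely the coset containing $b$), and within that coset its $\RR_B/\LL_B$-position is pinned down by $a$'s position relative to the transversal element $a\wedge b_0\wedge a$ and the rectangular multiplication rules $(x,y)\wedge(x',y')=(x,y')$ transported along $\varphi$. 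Spelling out that $a\wedge b = (\text{the element of }\DD_b\text{'s coset with first coordinate from }a\text{, second from }b)$, and dually for the other three products, is a bookkeeping argument in rectangular coordinates; the care needed is in matching the coordinate conventions on $A$, on $B$, and through $\varphi$ simultaneously, and in checking the two "mixed" products $a\wedge b$ and $b\wedge a$ land in the same coset but generally at different points. I would present this as a computation in the rectangular coordinatization rather than grinding every case, citing Theorem~\ref{coset_part}'s own hypotheses (the primitive structure) as the ambient framework.
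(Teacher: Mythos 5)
First, a point of comparison you could not have known: the paper contains no proof of this theorem at all. It is imported verbatim from Leech's \emph{The geometric structure of skew lattices} (the citation \cite{Le93}), so there is no in-paper argument to measure yours against. Judged on its own, your outline follows the standard route for primitive skew lattices (regularity of the reduct $(S;\wedge)$, rectangularity of the $\DD$-classes, Proposition~\ref{up}), and the partition and bijection parts are structured correctly. But there is one genuinely circular step and one place where the key identity is missing.

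The circular step is the transversal claim. You justify that the image set $a\wedge B\wedge a$ meets each coset at most once by appealing to ``uniqueness of the element of a given coset below $a$'' --- which is precisely the statement being proved. The actual argument is a short computation: if $y,y'\le a$ lie in the same coset, then (granting from the partition step that this coset equals $A\wedge y\wedge A$) one may write $y'=c\wedge y\wedge c$ for some $c\in A$, whence
$$y'=a\wedge y'\wedge a=a\wedge c\wedge y\wedge c\wedge a=a\wedge c\wedge(a\wedge y\wedge a)\wedge c\wedge a=(a\wedge c\wedge a)\wedge y\wedge(a\wedge c\wedge a)=a\wedge y\wedge a=y,$$
using $y,y'\le a$ and the rectangular identity $a\wedge c\wedge a=a$ in the $\DD$-class $A$. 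Note that it is the rectangularity of $A$, not of $B$ as you suggest, that closes this argument; also note that $b\in A\wedge b\wedge A$ itself already requires Proposition~\ref{up}. The second gap is in the final reconstruction assertion, where your ``bookkeeping in rectangular coordinates'' has no anchor without the identity $a\wedge b=(a\wedge b\wedge a)\wedge b$ (valid since $b\preceq a$ gives $b\wedge a\wedge b=b$), together with its three duals: this is exactly what expresses each mixed product as ``transport $a$ into $b$'s coset via the coset bijection, then multiply inside the rectangular band $B$,'' and it is the whole content of that assertion.
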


As a consequence of Theorem \ref{coset_part} that describes the image set of any element $b\in B$ as a transversal of cosets of $B$ in $A$, for any $b,b'\in B$, $$b^{\downarrow ^{*}}= b'^{\downarrow ^{*}}.$$
Dually, given any $a,a'\in A$, their images in $B$ have equal powers. 
This allows us to define the \emph{index} of $B$ in $A$, denoted by $\left[ A:B \right]$, as the cardinality of the image set $b\vee A\vee b$, for any $b\in B$. Dually, we define the index of $A$ in $B$, denoted by $[B:A]$, as the cardinality of the image set $a\wedge B\wedge a$, for any $a\in A$. The index $\left[ A:B \right]$ equals the cardinality of the set of all $B$-cosets in $A$, and $[B:A]$ equals the cardinality of the set  of all $A$-cosets in $B$.

\begin{proposition}\cite{Le93}\label{cs_normal} 
Let $\mathbf{S}$ be a skew lattice. Then, $\mathbf{S}$ is normal iff for each comparable pair of $\DD $-classes $A>B$ in $\mathbf{S}$,  $B$ is the entire coset of $A$ in $B$. That is, for all $x,x'\in B$, 

\begin{center}
$A\wedge x\wedge A=A\wedge x'\wedge A$.
\end{center}

Dually, $\mathbf{S}$ is conormal iff for all comparable pair of $\DD $-classes $A>B$ in $\mathbf{S}$ and all $x,x'\in A$, $B\vee x\vee B=B\vee x'\vee B$.
\end{proposition}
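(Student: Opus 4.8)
The plan is to prove the normality characterization directly from the definition $x\wedge y\wedge z\wedge w = x\wedge z\wedge y\wedge w$, playing it off against Theorem \ref{coset_part} and Proposition \ref{cs_normal}'s phrasing in terms of cosets. First I would prove the forward direction: assume $\mathbf{S}$ is normal, fix comparable $\DD$-classes $A>B$, and fix $x,x'\in B$. I need to show $A\wedge x\wedge A = A\wedge x'\wedge A$, i.e.\ that the single coset $A\wedge x\wedge A$ already exhausts $B$; equivalently, by Theorem \ref{coset_part}, that there is only one $A$-coset in $B$. The key computation is that for any $a\in A$ one has $a\wedge x\wedge a = a\wedge x'\wedge a$: starting from $a\wedge x\wedge a$, insert $x'$ using idempotency and the fact $x,x'\in B$ with $B$ below $A$ (so $a\wedge x = a\wedge x \wedge a \wedge x$ etc.), then apply the normal identity to swap the order of $x$ and $x'$, and collapse using idempotency of $\wedge$ on $B$ (since $x\wedge x' , x'\wedge x$ lie in $B$, a rectangular band, and $a\wedge(\text{anything in }B)\wedge a$ depends only on the $\DD$-class, which here is all of $B$). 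The upshot is $A\wedge x\wedge A \subseteq A\wedge x'\wedge A$ and by symmetry equality; this is exactly the coset condition. The conormal case is dual, replacing $\wedge$ by $\vee$ and using the conormal identity.

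For the converse I would assume that for every comparable pair $A>B$ the class $B$ is a single $A$-coset, and derive $x\wedge y\wedge z\wedge w = x\wedge z\wedge y\wedge w$ for arbitrary $x,y,z,w\in S$. The idea is to localize: set $u = x\wedge y\wedge z\wedge w$ and $v = x\wedge z\wedge y\wedge w$. By Proposition \ref{list_order}(i), both $u$ and $v$ lie below $x$ in the preorder, in fact in a $\DD$-class $B$ with $\DD_x \geq B$ (using that $\DD$ is a congruence, so $\DD_u = \DD_v = \DD_x \wedge \DD_y \wedge \DD_z \wedge \DD_w$ in the lattice $\mathbf{S}/\DD$, which is commutative). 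Working inside the primitive skew lattice $\set{A > B}$ where $A = \DD_x$, the hypothesis says $A\wedge b\wedge A$ is all of $B$ for any $b\in B$; in particular every element of $B$ has the form $a\wedge b\wedge a$ and one can use Theorem \ref{coset_part}'s description of $\wedge$ in $A\cup B$ — the operation is governed by the coset bijections and the rectangular structure — to check that $u$ and $v$ sit in the same coset-transversal position, hence are equal. An alternative, perhaps cleaner, route for the converse is to invoke Proposition \ref{cs_normal} itself only as a statement we are re-deriving and instead cite the equivalence of "single coset" with the mid-commutativity identity $x\wedge y\wedge z\wedge x = x\wedge z\wedge y\wedge x$ on each $B$, then bootstrap to the four-variable form via regularity of the band $(S,\wedge)$.

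The main obstacle I anticipate is the converse direction, specifically the bookkeeping needed to reduce the four-variable normal identity to a statement about a single primitive skew lattice $\set{A>B}$ and then to the coset condition there. One has to be careful that $x\wedge y\wedge z\wedge w$ and $x\wedge z\wedge y\wedge w$ really do lie in the \emph{same} $\DD$-class (this is immediate since $\mathbf{S}/\DD$ is a lattice, hence commutative) and that the ambient $\wedge$-band regularity lets one replace the outer factors $x,\dots,w$ by the single element $x$ (or by a chosen element of $A$) without changing which coset of $B$ one lands in. The genuinely delicate point is translating "all of $B$ is one $A$-coset" into an equational consequence; I expect to need the identity $a\wedge b\wedge a = a\wedge b'\wedge a$ for $b,b'\in B$ (the contrapositive of the forward computation) together with the fact that in the rectangular band $B$ the product $b\wedge b'$ equals, say, the appropriate $\LL$/$\RR$-combination, so that $y\wedge z$ and $z\wedge y$ — which differ only in rectangular position within their common $\DD$-class — get absorbed identically when sandwiched. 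Once that localization is in place, the rest is routine manipulation with idempotency and associativity, and the filter/conormal statements follow by the standard $\wedge \leftrightarrow \vee$ duality noted throughout the paper.
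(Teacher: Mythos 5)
First, note that the paper itself offers no proof of Proposition \ref{cs_normal}: it is quoted from \cite{Le93}, so there is no in-text argument to compare yours against; I can only assess your proposal on its own terms. Your forward direction is sound and can be completed exactly as you indicate: for $x,x'\in B$ and $a,a'\in A$, write $a\wedge x\wedge a'=a\wedge(x\wedge x'\wedge x)\wedge a'$ (rectangularity of $B$), apply the four-variable normal identity to swap $x$ and $x'$, and collapse; this yields $a\wedge x\wedge a'=a\wedge x'\wedge a'$ and hence $A\wedge x\wedge A=A\wedge x'\wedge A$. One caveat: your parenthetical justification that ``$a\wedge(\text{anything in }B)\wedge a$ depends only on the $\DD$-class'' is circular at that point in the argument --- that singleton-image-set property is a consequence of what you are proving, not a tool available mid-computation. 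Drop it; the insert-and-swap computation suffices.

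The converse is where the genuine gap lies. Your localization idea is right in spirit but the clean version is: the single-coset hypothesis together with Theorem \ref{coset_part} (image sets are transversals of cosets) forces each image set $m\wedge C\wedge m$ to be a singleton, i.e.\ below each $m$ there is exactly one element in each $\DD$-class $C\leq\DD_m$; since $x\wedge y\wedge z\wedge x$ and $x\wedge z\wedge y\wedge x$ both lie in the class $\DD_x\wedge\DD_y\wedge\DD_z$ and both are $\leq x$ (by Lemma \ref{order_id} and idempotency), they coincide, giving the \emph{sandwich} identity $x\wedge y\wedge z\wedge x=x\wedge z\wedge y\wedge x$. Your sketch of this step (``$u$ and $v$ sit in the same coset-transversal position'') is too vague --- in particular, taking $u=x\wedge y\wedge z\wedge w$ and $v=x\wedge z\wedge y\wedge w$ directly does not work, because neither is $\leq$ any obvious element of $\DD_x$, so the singleton-image-set argument does not apply to them as written. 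The real obstruction, which you flag but defer as ``routine manipulation,'' is the bootstrap from the sandwich identity to the four-variable identity $x\wedge y\wedge z\wedge w=x\wedge z\wedge y\wedge w$ that the paper takes as the definition of normality. This is not a one-line regularity computation: regularity lets you delete an inner $x$ only from words sandwiched by $x$ on \emph{both} ends, so $x\wedge y\wedge z\wedge x\wedge w$ cannot be reduced to $x\wedge y\wedge z\wedge w$ directly. The standard proofs of this equivalence go through the structure theorem for normal bands (a normal band is a strong semilattice of rectangular bands, in which $x\wedge y\wedge z\wedge w$ collapses to a two-letter product) or a genuinely nontrivial equational derivation. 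As it stands, your converse proves the mid-commutative/sandwich form of normality but not the stated identity, and that missing step needs either an explicit derivation or a citation.
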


A skew lattice is \emph{categorical} if nonempty composites of coset bijections are coset bijections. 
Skew Boolean algebras are examples of categorical skew lattices \cite{Le93}.   
 
\begin{proposition}\cite{AAA80}\label{order_eq_cosets}
Let $\mathbf{S}$ be a categorical skew lattice and $A\geq B$ comparable $\DD $-classes. Then,

$$\bigcup \set{\phi_{a,b}: a\in A,b\in B} =\geq _{A\times B},$$ where each $\phi _{a,b}$ is the coset bijection from $B\vee a\vee B$ to $A\wedge b\wedge A$.

\end{proposition}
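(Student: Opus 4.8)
The claim is an equality of two binary relations on $A\times B$: the union of all coset bijections $\phi_{a,b}$ equals the restriction $\geq_{A\times B}$ of the natural partial order. Since each $\phi_{a,b}$ is defined (Theorem \ref{coset_part}) exactly by the rule $x\mapsto y$ iff $x\geq y$, the inclusion $\bigcup\set{\phi_{a,b}}\subseteq\geq_{A\times B}$ is immediate and needs only a sentence. The content is the reverse inclusion: given $a'\in A$ and $b'\in B$ with $a'\geq b'$, I must exhibit one of the distinguished coset bijections $\phi_{a,b}$ that sends $a'$ to $b'$. The plan is to produce such a pair $(a,b)$ and then invoke categoricity to identify the composite of coset bijections that witnesses $(a',b')$.

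\textbf{Key steps.} First I would fix $a'\geq b'$ and work with $a=a'$ and $b=b'$. The coset bijection $\phi_{a',b'}$ goes from the $B$-coset $B\vee a'\vee B$ (which contains $a'$, since $b\vee a'\vee b$ is a coset representative and $a'$ lies in its own coset) to the $A$-coset $A\wedge b'\wedge A$ (which contains $b'$ for the dual reason). Then, by the defining property of $\phi_{a',b'}$ in Theorem \ref{coset_part}, $\phi_{a',b'}(a')$ is the unique element $y\in A\wedge b'\wedge A$ with $a'\geq y$. Now I must check that $y=b'$: this follows because $b'\in A\wedge b'\wedge A$ and $a'\geq b'$ by hypothesis, together with uniqueness of the element of the target coset below $a'$ — uniqueness being exactly the transversal statement of Theorem \ref{coset_part} (the image set $a'\wedge B\wedge a'$ meets each $A$-coset in $B$ in one point). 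Hence $\phi_{a',b'}(a')=b'$, so $(a',b')\in\bigcup\set{\phi_{a,b}}$, giving $\geq_{A\times B}\subseteq\bigcup\set{\phi_{a,b}}$.

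\textbf{Role of categoricity.} One should double-check whether categoricity is genuinely needed for the argument above; the direct verification as sketched seems to use only Theorem \ref{coset_part}. The hypothesis becomes essential if one instead wants the union on the left to be taken only over a generating family of coset bijections and then closed under composition, or if $A,B$ are non-adjacent $\DD$-classes in a longer skew chain so that the relevant bijections between $A$ and $B$ are composites; in a categorical skew lattice such composites are again coset bijections, so the union $\bigcup\set{\phi_{a,b}:a\in A,b\in B}$ is well-defined as a family of honest coset bijections and the counting identity $[A:B]\cdot|{\rm image\ set}|=|B|$ (from the index discussion after Theorem \ref{coset_part}) guarantees the union is a genuine bijective relation rather than a multivalued one.

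\textbf{Main obstacle.} The delicate point is the injectivity/transversality bookkeeping: one must be sure that no two distinct coset bijections $\phi_{a,b}$ and $\phi_{a,b''}$ both send $a'$ below different elements of $B$ that happen to be $\geq$-related to $a'$, i.e.\ that the element of $B$ below a given $a'\in A$ inside a prescribed $A$-coset is genuinely unique. This is where categoricity does the work — without it, composites of coset bijections need not be coset bijections, the left-hand union can fail to be a partial bijection, and the neat equality with $\geq_{A\times B}$ breaks down. So the proof should state clearly where Theorem \ref{coset_part} gives uniqueness within a single coset bijection and where categoricity is invoked to keep composites inside the family, and then the two inclusions close the argument.
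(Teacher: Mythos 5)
Your two-inclusion argument is correct, and note that the paper itself offers no proof to compare against: Proposition \ref{order_eq_cosets} is quoted from \cite{AAA80} and stated without proof here. The forward inclusion is indeed immediate from the implicit definition of the coset bijections in Theorem \ref{coset_part}, and your reverse inclusion is sound: given $a'\geq b'$, the partitions of $A$ and $B$ into cosets put $a'\in B\vee a'\vee B$ and $b'\in A\wedge b'\wedge A$, and the transversality clause of Theorem \ref{coset_part} (the image set $a'\wedge B\wedge a'$ meets the coset $A\wedge b'\wedge A$ in exactly one point) forces $\phi_{a',b'}(a')=b'$. Your instinct that categoricity is not actually used is also right: for two directly comparable $\DD$-classes the identity holds in any skew lattice, and the hypothesis is presumably carried over from the source, where the interesting content concerns chains $A>B>C$ and composites of coset bijections. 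The one place your discussion goes astray is the suggestion that without categoricity ``the left-hand union can fail to be a partial bijection, and the neat equality breaks down'': the union $\bigcup\set{\phi_{a,b}}$ is never a partial bijection in the first place --- each $a\in A$ is related to $[B:A]$ distinct elements of $B$, one per $A$-coset, which is exactly why the union fills out all of $\geq_{A\times B}$ --- and there is no uniqueness issue to police across different $\phi_{a,b}$, since each individual bijection already carries the uniqueness you need within its own target coset. So the proof stands on Theorem \ref{coset_part} alone; I would simply delete the speculative paragraphs and state the two inclusions.
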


A categorical skew lattice is \emph{strictly categorical} if the compositions of coset bijections between comparable $\DD $-classes $A>B>C$ are never empty. Normal skew lattices are strictly categorical \cite{Le11a}.

\begin{proposition}\cite{Le11a}\label{lstrictly}
Let $\mathbf{S}$ be a skew lattice. Then, $\mathbf{S}$ is strictly categorical if, and only if, given any $x\in S$, $x^{\uparrow ^{*}}$ is a normal sub skew lattice of $\mathbf{S}$ and $x^{\downarrow ^*}$ is a conormal sub skew lattice of $\mathbf{S}$.
\end{proposition}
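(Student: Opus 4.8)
The plan is to prove both directions of the biconditional by translating the coset-theoretic content of ``strictly categorical'' into statements about the principal skew filters $x^{\uparrow^*}$ and principal skew ideals $x^{\downarrow^*}$, using the description of these sets from Proposition \ref{sxs}(ii), namely $x^{\uparrow^*} = x\vee S\vee x$ and $x^{\downarrow^*} = x\wedge S\wedge x$, together with the coset-bijection machinery of Theorem \ref{coset_part} and Proposition \ref{order_eq_cosets}. Recall that a sub skew lattice $T$ is normal iff for every comparable pair of $\DD$-classes $A>B$ met by $T$, the trace $T\cap B$ lies in a single $A$-coset (Proposition \ref{cs_normal} applied inside $T$); dually for conormality. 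So the whole statement is really a bookkeeping of which cosets the sets $x\vee S\vee x$ and $x\wedge S\wedge x$ can meet, and the key observation is that a composite of coset bijections between $A>B>C$ being nonempty is exactly the assertion that some element of $x^{\uparrow^*}$ (for $x$ in the bottom class $C$, say) projects all the way down through the middle class without the coset data ``pinching off''.

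First I would fix notation: assume $x\in S$ lies in a $\DD$-class $D_x$, and note that by Proposition \ref{sxs}(ii), $x^{\uparrow^*}$ meets a $\DD$-class $A\geq D_x$ precisely in the set $\{a\in A : a\geq x\}$, which is the image set of $x$ in $A$ — hence by Theorem \ref{coset_part} it is a transversal of the $D_x$-cosets in $A$, i.e.\ it meets each $D_x$-coset in $A$ in exactly one point. Symmetrically $x^{\downarrow^*}$ meets each $\DD$-class $C\leq D_x$ in the image set $x\wedge C\wedge x$, a transversal of the $D_x$-cosets in $C$. For the forward direction, suppose $\mathbf S$ is strictly categorical. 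To show $x^{\uparrow^*}$ is normal, take comparable classes $A>B$ both $\geq D_x$ and inside $x^{\uparrow^*}$; I must show the trace of $x^{\uparrow^*}$ on $B$ sits in one $A$-coset in $B$. Using strict categoricity on the chain $A>B>D_x$, the coset bijections compose without being empty, and Proposition \ref{order_eq_cosets} identifies the union of coset bijections with the order relation, so the element of $x^{\uparrow^*}\cap B$ that lies above $x$ determines, via the composite, a single coset of $A$ meeting $B$ through which the whole image set of $x$ in $A$ passes — this is where the non-emptiness of the composite is essential, since it guarantees the bijection relating $A$-cosets in $B$ to $A$-cosets in $D_x$ is actually realized. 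The conormality of $x^{\downarrow^*}$ follows by the order-dual argument applied to chains $D_x>B>C$.

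For the converse, suppose that for every $x$, $x^{\uparrow^*}$ is normal and $x^{\downarrow^*}$ is conormal, and take a chain of $\DD$-classes $A>B>C$ in $\mathbf S$; I must show that composing a coset bijection $\varphi: \text{(a $B$-coset in $A$)}\to\text{(an $A$-coset in $B$... )}$ — more precisely composites of coset bijections between $A>B$ and $B>C$ — are nonempty. Pick $c\in C$; then $c^{\uparrow^*}$ meets $A$, $B$, $C$, and by hypothesis $c^{\uparrow^*}$ is normal, so its trace on $C$... actually one should pick the element more carefully: choose $a\in A$ in the relevant $B$-coset, let $b = $ its image in $B$ and $c = $ image of $b$ in $C$; then $a\geq b\geq c$ forces $a\geq c$ (transitivity of $\leq$), and normality of $c^{\uparrow^*}$ forces the $A$-coset structure on $B$ seen inside $c^{\uparrow^*}$ to collapse to one coset, which translated back says the relevant composite of coset bijections is defined at $c$, hence nonempty. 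Dually one uses conormality of $a^{\downarrow^*}$. The main obstacle I anticipate is the careful matching, in both directions, between ``the composite coset bijection is (non)empty'' and ``the trace of the principal skew filter/ideal lies in a single coset'': one has to be precise about \emph{which} two cosets a coset bijection connects and verify that the natural bijection implicit in Theorem \ref{coset_part} (defined by $x\geq y$) is exactly the one whose composability is at stake, and that Proposition \ref{order_eq_cosets} may be invoked — which requires knowing categoricity, so one should first check (or cite) that normality/conormality of all principal skew ideals and filters already implies the skew lattice is categorical before upgrading to strictly categorical.
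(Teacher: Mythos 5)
The paper does not actually prove this proposition: it is imported verbatim from Kinyon and Leech \cite{Le11a}, so there is no in-paper argument to compare yours against. Judged on its own terms, your sketch identifies the right translation --- $x^{\uparrow^{*}}\cap A$ is the image set $x\vee A\vee x$, a transversal of the $\DD_x$-cosets in $A$, and normality/conormality should be read off coset data via Proposition \ref{cs_normal} --- but both directions have genuine gaps. First, Proposition \ref{cs_normal} applied \emph{inside} $T=x^{\uparrow^{*}}$ speaks of the cosets $(T\cap A)\wedge b\wedge(T\cap A)$ computed in the subalgebra, not of the trace $T\cap B$ lying in a single $A$-coset of $B$ computed in $\mathbf{S}$; these two conditions do turn out to be equivalent for these particular subalgebras, but that equivalence needs an argument (using the transversal property of image sets and Proposition \ref{up}) and you assume it silently. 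Second, in the forward direction the decisive claim is that the image set of $x$ in $B$ lies inside a single $A$-coset of $B$ whenever $A>B>\DD_x$; your justification (``determines, via the composite, a single coset through which the whole image set passes'') is not an argument. Non-emptiness of composites alone says only that every $A$-coset of $B$ meets every $\DD_x$-coset of $B$; a priori a transversal of the $\DD_x$-cosets could still wander through several $A$-cosets, and ruling this out is exactly where categoricity proper --- composites being genuine coset bijections, hence Proposition \ref{order_eq_cosets} --- has to enter. You never make that step explicit.

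Third, in the converse you only test composites at cosets witnessed by a chain $a\geq b\geq c$ that you construct, whereas strict categoricity requires every $A$-coset $P$ in $B$ to meet every $C$-coset $Q$ in $B$ for \emph{arbitrary, independently chosen} $P$ and $Q$. The repair is to choose $c$ adapted to $P$: pick $b\in P$ and $c\in C$ with $c\leq b$ (Proposition \ref{up}); normality of $c^{\uparrow^{*}}$ then places the image set $c\vee B\vee c$ inside the single $A$-coset containing $b$, namely $P$, and since that image set is a transversal of the $C$-cosets of $B$ it meets $Q$, giving $P\cap Q\neq\emptyset$. Finally, since the paper defines ``strictly categorical'' as categorical \emph{with} non-empty composites, the converse also owes a proof that your hypothesis forces categoricity in the first place; you flag this debt yourself but do not discharge it, and it cannot be waved away, because Proposition \ref{order_eq_cosets}, on which you lean, is only available for categorical skew lattices. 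As it stands the proposal is a plausible outline of the Kinyon--Leech argument rather than a proof.
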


In particular, sub skew lattices of strictly categorical skew lattices are also strictly categorical. Furthermore,

\begin{theorem}\cite{Co11}\label{count_cat}
Given any skew chain $A>B>C$ in a skew lattice $\mathbf{S}$, $[C:A]\leq [C:B][B:A]$. If $\mathbf{S}$ is a strictly categorical skew lattice and both $A$ and $C$ are finite, then so is $B$ and

$$[C:A] = [C:B][B:A]$$ \label{CAT} 

In general, consider any skew chain $A_{1}<A_{2}<\dots <A_{n}$,  for some $n\in \N$, in a strictly categorical skew lattice $\mathbf{S}$. If $A_{1}$ and $A_{n}$ are finite, then so are all intermediate $\DD $-classes and

$$[A_1:A_n] =\pi_{k=1}^{n-1} [A_k:A_{k+1}].$$
\end{theorem}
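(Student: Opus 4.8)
The plan is to prove the three assertions in order, since each builds on the previous one. \textbf{Step 1: The inequality $[C:A]\le [C:B][B:A]$ for an arbitrary skew chain $A>B>C$.} Fix $c\in C$. By Theorem~\ref{coset_part}, the image set $c\vee A\vee c$ is a transversal of the $A$-cosets in $C$, so $[C:A]=|c\vee A\vee c|$, and likewise $[C:B]=|c\vee B\vee c|$. I would produce a surjection from a set of size $[C:B][B:A]$ onto $c\vee A\vee c$. The natural candidate: for each $b\in c\vee B\vee c$ (there are $[C:B]$ of these) and each element $a$ in the image set $b\vee A\vee b$ (there are $[B:A]$ of these, by the index being well-defined), send the pair $(b,a)$ to $c\vee a\vee c \in c\vee A\vee c$. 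Every element of $c\vee A\vee c$ arises this way: given $a'\ge c$ in $A$, use Proposition~\ref{up} (or rather its join-dual within the relevant principal filter) to find $b'$ with $a'\ge b'\ge c$, so $b'\in c\vee B\vee c$ and $a'\in b'\vee A\vee b'$ and $c\vee a'\vee c$ lands on the right element. This gives the cardinality inequality without any categoricity hypothesis.

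\textbf{Step 2: Equality in the strictly categorical case for $A>B>C$ with $A,C$ finite.} Here the surjection of Step~1 must be shown to be a bijection, i.e. the pairs $(b,a)$ are in bijection with $c\vee A\vee c$. The key input is strict categoricity: composites of coset bijections between $A>B>C$ are never empty, and composites of coset bijections are coset bijections (categoricity). I would argue that the map $(b,a)\mapsto c\vee a\vee c$ is injective: if $c\vee a\vee c=c\vee a'\vee c=:c''$, then both $a$ and $a'$ lie above $c''$, hence in the same image set structure; using the coset bijection $B\to C$ (which is a bijection between a $B$-coset in $A$ via the $A$-coset picture, composed appropriately) one recovers $b$ from $c''$ uniquely as the unique element of $c\vee B\vee c$ below $a$, and then $a$ from $b$ and $c''$. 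More cleanly: strict categoricity guarantees the coset bijection $\varphi_{A,C}$ from a $C$-coset in $A$ to the matching $A$-coset in $C$ factors as $\varphi_{B,C}\circ\varphi_{A,B}$ with the intermediate piece nonempty, and these factorizations partition $A$ consistently with the partition of $B$; counting the fibers then forces $[C:A]=[C:B][B:A]$. Finiteness of $A$ and $C$ forces finiteness of $B$ because $B$ embeds (as a transversal question) between them — each $A$-coset in $B$ has size $\le|A|$-worth of structure and there are $[B:A]\le[C:A]\cdot(\text{something finite})$ of them; more directly, $|B|\le |A|\cdot[B:A]$ and $[B:A]\le[C:A]$ from Step~1 composed with $[C:B]\ge 1$, while $[C:A]\le |C|$, so $B$ is finite.

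\textbf{Step 3: The $n$-fold product.} This is a straightforward induction on $n$. For $n=2$ it is trivial ($[A_1:A_2]=[A_1:A_2]$), and for $n=3$ it is Step~2 (rewriting $A_1<A_2<A_3$ as $A_3>A_2>A_1$ with $A_3,A_1$ finite). For the inductive step, given $A_1<\dots<A_{n}$ with $A_1,A_n$ finite: apply Step~2 to the subchain $A_n>A_{n-1}>A_1$... but $A_{n-1}$ is not yet known finite. Instead I would apply Step~2 first to $A_n > A_{n-1} > A_1$ reversed, i.e. to get $A_{n-1}$ finite, one applies the three-term equality to $A_1 < A_{n-1} < A_n$ only after knowing $A_{n-1}$ is finite; the clean route is: apply Step~2 to $A_1<A_2<A_n$, which needs $A_2$ finite. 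So the right order is to first establish that \emph{all} intermediate classes are finite. This follows by applying Step~2's finiteness conclusion repeatedly: from $A_1<A_2<A_n$ — no, that again presupposes finiteness of $A_2$. The correct argument: $[A_1:A_n]\le |A_1|$ is finite, and by the general inequality of Step~1 applied iteratively, $[A_k:A_{k+1}]\le [A_1:A_n]$-type bounds hold, in particular each $[A_k:A_{k+1}]$ is finite and nonzero (nonzero by strict categoricity, which also passes to subchains since sub skew lattices of strictly categorical skew lattices are strictly categorical), and then $|A_{k+1}|\le|A_k|\cdot[A_{k+1}:A_k]$ propagates finiteness upward from $A_1$. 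With all $A_k$ finite, repeatedly apply the three-term equality of Step~2: $[A_1:A_n]=[A_1:A_2][A_2:A_n]=[A_1:A_2][A_2:A_3][A_3:A_n]=\dots=\prod_{k=1}^{n-1}[A_k:A_{k+1}]$.

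\textbf{Main obstacle.} The hard part is Step~2 — precisely, turning ``composites of coset bijections are coset bijections and are never empty'' into the statement that the partition of $A$ by $C$-cosets refines consistently through $B$ so that each $A$-coset in $B$ contributes exactly $[B:A]$... wait, contributes exactly the expected multiplicity, and no collapsing occurs. The bookkeeping of which coset bijection goes between which cosets, and verifying that strict categoricity is exactly what rules out the inequality being strict, is the delicate point; by contrast Steps~1 and~3 are essentially formal. I would lean on Proposition~\ref{order_eq_cosets} (the union of coset bijections equals the order relation restricted to $A\times B$) to translate everything into statements about the partial order, where the factorization of order-relations through the middle class $B$ becomes transparent.
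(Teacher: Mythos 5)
First, note that the paper itself offers no proof of Theorem \ref{count_cat}: it is imported verbatim from \cite{Co11} (and the finiteness/product part from \cite{Le11a}), so your attempt can only be measured against the standard argument there. Your overall architecture is the right one — fibre the image set of one extreme class in the other over the image set in the middle class $B$, get $\leq$ from the covering, get $=$ from disjointness, and telescope for the $n$-fold chain — but there are two real problems. The lesser one is a persistent index mix-up: with the paper's conventions, $|c\vee A\vee c|$ is $[A:C]$, not $[C:A]$ (the image set of $c$ in $A$ is a transversal of the \emph{$C$-cosets in $A$}), and since $[A:C]/[C:A]=|A|/|C|$ these are genuinely different numbers. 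As literally written, your Steps 1–2 prove the dual identity $[A:C]=[A:B][B:C]$; the stated one requires fixing $a\in A$ and decomposing $a\wedge C\wedge a$ over $a\wedge B\wedge a$. This also infects your finiteness argument: Step 1's inequality $[C:A]\leq[C:B][B:A]$ bounds the product from \emph{below} by $[C:A]$, so "$[B:A]\leq[C:A]$ from Step 1 composed with $[C:B]\geq 1$" does not follow; that bound needs the equality (or the disjointness) first.

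The more serious gap is at the one point where the hypothesis of strict categoricity must actually be used. Writing $a\wedge C\wedge a=\bigcup_{b\in a\wedge B\wedge a}(b\wedge C\wedge b)$ is valid in any skew chain (the containment $\supseteq$ is transitivity of $\leq$; the containment $\subseteq$ is the interpolation $c\leq c\vee(a\wedge b'\wedge a)\vee c\leq a$), and this already yields Step 1 correctly oriented. Equality then holds iff this union is \emph{disjoint}, i.e.\ iff for $a>c$ there is a \emph{unique} $b\in B$ with $a>b>c$. You assert this uniqueness ("one recovers $b$ \dots uniquely as the unique element of $c\vee B\vee c$ below $a$") but never derive it from the definition of strict categoricity (nonemptiness of composites of coset bijections) or from Proposition \ref{lstrictly}; this is precisely the characterization of strictly categorical skew chains established by Kinyon and Leech, and it is the entire content of the theorem — without it your map $(b,a)\mapsto a$ has no reason to be injective. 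Everything else (the cardinal-arithmetic deduction that $B$ is finite via $|B|\leq|A|\cdot[B:A]$, and the induction in Step 3, which is cleanest as: apply the three-term case to $A_n>A_{k+1}>A_k$ to propagate finiteness up from $A_1$, then telescope) is correct modulo this missing lemma.
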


Due to Theorem \ref{count_cat}, when we consider $\DD$-classes $A_{1}>A_{2}>\dots >A_{n}=C$ of a strictly categorical skew lattice $\mathbf S$ and $e_{i}$ be the number $[C,A_{i}]$, we get $e_{k+1}\mid e_{k}$ for all $k\in \set{1,\dots, n}$. 

\red{(...)} \black

\smallskip

Let $\mathbf{S}$ be a skew lattice and $x,y\in S$. We denote $x\lhd y$ whenever $x<y$ and there is no $w\in S$ such that $x<w<y$.
The following results show how skew ideals are related with the coset structure of a skew lattice.

\begin{proposition}\label{idealtocoset}
Let $\mathbf{S}$ be a skew lattice and $x,y\in S$ such that $y\leq x$. 
Then, $x^{\downarrow ^*}$ intersects all cosets of $\DD_{x}$ in $\DD_{y}$.
Dually, $y^{\uparrow ^*}$ intersects all cosets of $\DD_{y}$ in $\DD_{x}$.
\end{proposition}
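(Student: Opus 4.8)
The plan is to fix $y \leq x$ and show that an arbitrary coset of $\DD_x$ in $\DD_y$ meets $x^{\downarrow^*}$. Write $A = \DD_x$ and $B = \DD_y$, so $A > B$. A coset of $A$ in $B$ has the form $A \wedge b \wedge A$ for some $b \in B$. By Theorem~\ref{coset_part}, the image set $x \wedge B \wedge x = \{x \wedge b' \wedge x : b' \in B\}$ is a transversal of the cosets of $A$ in $B$; in particular it meets $A \wedge b \wedge A$ in exactly one point, say $z = x \wedge b' \wedge x$ for some $b' \in B$. Since by Proposition~\ref{list_order}(ii) we have $x \wedge b' \wedge x \leq x$, this element $z$ lies in $x^{\downarrow^*} = x \wedge S \wedge x = \{w \in S : w \leq x\}$ (Proposition~\ref{sxs}(ii)). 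Hence $z \in x^{\downarrow^*} \cap (A \wedge b \wedge A)$, which is what we want.

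The key steps, in order: (1) reduce to showing $x^{\downarrow^*}$ meets each coset $A \wedge b \wedge A$; (2) recall from Proposition~\ref{sxs}(ii) that $x^{\downarrow^*}$ equals the image set $x \wedge S \wedge x$, and that since every element of $B$ below $x$ is of the form $x \wedge b' \wedge x$, the portion of $x^{\downarrow^*}$ lying in $B$ is exactly the image set $x \wedge B \wedge x$; (3) invoke Theorem~\ref{coset_part} to conclude that this image set is a transversal of the cosets of $A$ in $B$, so it meets every such coset (in fact in exactly one point). The dual statement for $y^{\uparrow^*}$ and the cosets of $B$ in $A$ follows by the dual argument, using the dual parts of Propositions~\ref{list_order}, \ref{sxs} and the dual clause of Theorem~\ref{coset_part}.

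The only subtle point is step (2): one must be careful that $x \wedge B \wedge x$ is genuinely contained in $x^{\downarrow^*}$ and genuinely consists of elements of $B$. The first inclusion is immediate from $x \wedge b' \wedge x \leq x$ (Proposition~\ref{list_order}(ii)) together with the characterization $x^{\downarrow^*} = \{w : w \leq x\}$. For the second, $x \wedge b' \wedge x \DD x \wedge y \wedge x$ by regularity (all of $B$ is a single $\DD$-class, and $x \wedge b' \wedge x$ is a $\wedge$-product lying in a $\DD$-class $\le \DD_x$; since $b' \in B$ and $\DD$ is a congruence, $x\wedge b'\wedge x \in \DD_{x}\wedge\DD_{b'}\wedge\DD_{x} = \DD_x \wedge \DD_y \wedge \DD_x$, and $\DD_x \wedge \DD_y = \DD_y = B$ because $A > B$ means $A \wedge B = B$). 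Thus $x \wedge B \wedge x \subseteq B$, and the image set description in Theorem~\ref{coset_part} applies verbatim. I expect this bookkeeping to be the main (though minor) obstacle; the heart of the argument is just the transversal property already packaged in Theorem~\ref{coset_part}.
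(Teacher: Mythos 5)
Your proof is correct and follows essentially the same route as the paper: both arguments rest entirely on Theorem~\ref{coset_part}, the paper citing the family of coset bijections from $\DD_y\vee x\vee\DD_y$ onto each coset $\DD_x\wedge z\wedge\DD_x$, while you invoke the equivalent transversal clause for the image set $x\wedge B\wedge x$ and observe that this image set sits inside $x^{\downarrow^*}$. Your extra bookkeeping in step (2) is sound but already packaged in the paper's definition of the image set as $\set{b\in B : b<a}$.
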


\begin{proof}
This is a consequence of the existence of the family of coset bijections between $\DD_{y}\vee x\vee \DD_{y}$ and $\DD_{x}\wedge z\wedge \DD_{x}$, guaranteed by Theorem \ref{coset_part}, for all $z\in \DD_{y}$.
The dual case is similar. 
\end{proof}

\begin{proposition}\label{ideal_coset}
Let $\mathbf{S}$ be a primitive skew lattice with comparable $\DD$-classes $A>B$. For all $a\in A$ and $b\in B$,
\begin{center}
$a^{\downarrow ^{*}} =(a\wedge B\wedge a)\cup \set{a}$ and $b^{\uparrow ^{*}}= (b\vee A\vee b)\cup \set{b}$.
\end{center}
\end{proposition}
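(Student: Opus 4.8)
The statement is really a special case of Proposition \ref{idealtocoset} combined with Proposition \ref{sxs}(ii), so the plan is to unwind both. By Proposition \ref{sxs}(ii) we know $a^{\downarrow^*}=\{y\in S : y\leq a\}$. Now $S=A\cup B$, and by Proposition \ref{order_dclass} all $\DD$-classes are antichains, so the only element of $A$ below (or equal to) $a$ is $a$ itself; that accounts for the summand $\{a\}$. Every other element $y\in a^{\downarrow^*}$ must therefore lie in $B$, and $y\leq a$ means precisely $y\in a\wedge B\wedge a$ by the very definition of the image set $a\wedge B\wedge a=\{b\in B : b<a\}$ recalled at the start of this section. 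Conversely every element of $a\wedge B\wedge a$ is $\leq a$ and hence lies in $a^{\downarrow^*}$, while $a\in a^{\downarrow^*}$ by idempotency. This gives $a^{\downarrow^*}=(a\wedge B\wedge a)\cup\{a\}$.

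\textbf{Key steps, in order.} First I would invoke Proposition \ref{sxs}(ii) to replace $a^{\downarrow^*}$ by $\{y\in S:y\leq a\}$, and dually $b^{\uparrow^*}$ by $\{y\in S:y\geq b\}$. Second, I would split the ambient set as $S=A\cup B$ (the hypothesis that $\mathbf S$ is primitive with classes $A>B$). Third, for the ``$A$-part'' I would apply Proposition \ref{order_dclass}: if $y\in A$ and $y\leq a$ then $y\DD a$ forces $y=a$, so $a^{\downarrow^*}\cap A=\{a\}$. Fourth, for the ``$B$-part'' I would observe that $y\in B$ with $y\leq a$ is, by the definition of image set given in this section, exactly the condition $y\in a\wedge B\wedge a$ (note $y\neq a$ automatically since $y\in B$, $a\in A$, so $y<a$). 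Fifth, the reverse inclusions: every $y\in a\wedge B\wedge a$ satisfies $y\leq a$, and $a=a\wedge a\wedge a\leq a$. The filter claim for $b^{\uparrow^*}$ is handled by the dual argument, using the dual half of Proposition \ref{sxs}(ii), the antichain property again, and the definition $b\vee A\vee b=\{a\in A:b<a\}$.

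\textbf{Main obstacle.} Honestly there is no serious obstacle here; the proof is a short bookkeeping argument. The only point that needs a moment's care is verifying that the image set $a\wedge B\wedge a$ as a \emph{set of products} coincides with $\{b\in B:b<a\}$ — but this equality is exactly the definition recalled at the opening of this section (and follows from Lemma \ref{order_id} together with regularity), so it may simply be cited. One should also make sure the union is ``honest,'' i.e. that $a\notin a\wedge B\wedge a$, which is immediate because $a\in A$ while $a\wedge B\wedge a\subseteq B$ and $A\cap B=\emptyset$. So I would present the proof in three or four lines, citing Propositions \ref{sxs} and \ref{order_dclass} and the definition of image set, and noting that the filter case is dual.
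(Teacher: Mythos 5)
Your proof is correct, and it takes a slightly different (and in fact more self-contained) route than the paper. The paper disposes of the statement in one line by appealing to Proposition \ref{up} and to the description, in Theorem \ref{coset_part}, of the image set $a\wedge B\wedge a$ as a transversal of the cosets of $A$ in $B$ inside $S=A\cup B$; it leaves to the reader the translation between that coset-theoretic picture and the set $a^{\downarrow^{*}}$. You instead bypass the coset machinery entirely: you use Proposition \ref{sxs}$(ii)$ to rewrite $a^{\downarrow^{*}}$ as $\set{y\in S: y\leq a}$, split $S=A\cup B$, kill the $A$-part with the antichain property of $\DD$-classes (Proposition \ref{order_dclass}), and identify the $B$-part with the image set via the equality $a\wedge B\wedge a=\set{b\in B: b<a}$ that the paper records when it defines image sets. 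What your approach buys is precision and economy of hypotheses — every step is an explicit citation of an earlier result or definition, and nothing about transversals or coset partitions is needed; what the paper's phrasing buys is a pointer to the geometric picture (the image set meeting each $A$-coset of $B$ exactly once) that motivates the section. Your observation that the union is disjoint because $a\in A$ while $a\wedge B\wedge a\subseteq B$ is a harmless extra; the dual argument for $b^{\uparrow^{*}}$ goes through exactly as you indicate.
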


\begin{proof}
This result is a direct consequence of proposition \ref{up} considering the image set of $a\in A$ as a transversal of cosets of $B$ in the context of $S=A\cup B$.
\end{proof}

The description of a principal skew ideal of a finite skew lattice comes as a direct consequence of Theorem \ref{coset_part} together with proposition \ref{ideal_coset}.
Observe that the case of primitive skew lattices is described in proposition \ref{ideal_coset} by the existence of $b\in B$ such that $b\lhd a$ for all $a\in A$.
The general case is described as follows: 

\begin{proposition}\label{sideal_{rec}} 
Let $\mathbf{S}$ be a finite skew lattice and $x\in S$.  Then, $x^{\downarrow ^{*}}=\bigcup _{n} x^{\downarrow ^{*}}_{n}$ where $x^{\downarrow ^{*}}_{n}$ is defined recursively as follows:

\begin{itemize}
\item[]$x^{\downarrow ^{*}}_{0}=\set{x}$;
\item[]$x^{\downarrow ^{*}}_{n+1}=\cup \set{z\wedge \DD_{y}\wedge z: z\in x^{\downarrow ^{*}}_{n}\text{ and } y\lhd z}$.
\end{itemize}

A similar result can be stated for skew filters.
\end{proposition}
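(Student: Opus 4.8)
The plan is to show the two inclusions $x^{\downarrow^{*}} \subseteq \bigcup_n x^{\downarrow^{*}}_n$ and $\bigcup_n x^{\downarrow^{*}}_n \subseteq x^{\downarrow^{*}}$, after first checking that the recursion stabilizes (since $\mathbf{S}$ is finite, the chain $x^{\downarrow^{*}}_0 \subseteq x^{\downarrow^{*}}_0 \cup x^{\downarrow^{*}}_1 \subseteq \dots$ is eventually constant, so the union is really a finite one). The key observation underlying both inclusions is that for $z \in S$ and $y \lhd z$, the set $z \wedge \DD_y \wedge z$ is exactly the image set of $z$ in $\DD_y$, which by the discussion following Theorem \ref{coset_part} equals $\set{w \in \DD_y : w < z} = \set{w \in \DD_y : w \leq z}$ (the covering relation $y \lhd z$ forces $\DD_y \lhd \DD_z$ in $\mathbf{S}/\DD$, and Proposition \ref{order_dclass} rules out equality). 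Thus each ``step'' of the recursion adds precisely those elements of $S$ that sit one covering-level below something already collected and lie in an adjacent $\DD$-class.

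For the inclusion $\bigcup_n x^{\downarrow^{*}}_n \subseteq x^{\downarrow^{*}}$, I would argue by induction on $n$ that $x^{\downarrow^{*}}_n \subseteq x^{\downarrow^{*}}$. The base case is Proposition \ref{antecip2}$(i)$ (or simply $x \leq x$). For the inductive step, if $z \in x^{\downarrow^{*}}_n \subseteq x^{\downarrow^{*}}$, i.e. $z \leq x$, and $w \in z \wedge \DD_y \wedge z$ with $y \lhd z$, then $w \leq z \leq x$ by transitivity of $\leq$, so $w \in x^{\downarrow^{*}}$. This direction is routine. For the reverse inclusion, fix $a \in x^{\downarrow^{*}}$, so $a \leq x$; I must produce $n$ with $a \in x^{\downarrow^{*}}_n$. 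Since $\mathbf{S}$ is finite, the interval $\set{w : a \leq w \leq x}$ is a finite poset, so there is a maximal chain $a = z_0 \lhd z_1 \lhd \dots \lhd z_k = x$ from $a$ up to $x$ (each consecutive pair a covering pair in $\mathbf{S}$, hence in particular $z_{i-1} \lhd z_i$). Descending this chain, I show by induction that $z_{k-j} \in x^{\downarrow^{*}}_j$: indeed $z_k = x \in x^{\downarrow^{*}}_0$, and given $z_{k-j} \in x^{\downarrow^{*}}_j$ with $z_{k-j-1} \lhd z_{k-j}$ and $z_{k-j-1} \in \DD_{z_{k-j-1}}$, the element $z_{k-j-1}$ lies in the image set $z_{k-j} \wedge \DD_{z_{k-j-1}} \wedge z_{k-j}$ because $z_{k-j-1} \leq z_{k-j}$ and $z_{k-j-1} \in \DD_{z_{k-j-1}}$; hence $z_{k-j-1} \in x^{\downarrow^{*}}_{j+1}$. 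Taking $j = k$ gives $a = z_0 \in x^{\downarrow^{*}}_k$, as desired.

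The main obstacle is the second inclusion, and specifically the claim that the covering relation $\lhd$ \emph{within $\mathbf{S}$} (i.e.\ $z_{i-1} \lhd z_i$ with respect to $\leq$ on $S$) is what is needed to feed the recursion, whose condition ``$y \lhd z$'' is about elements of $S$, while the coset machinery of Theorem \ref{coset_part} operates at the level of $\DD$-classes. I need to confirm that $z_{i-1} \lhd z_i$ in $S$ implies $\DD_{z_{i-1}}$ and $\DD_{z_i}$ are comparable $\DD$-classes with no $\DD$-class strictly between them — equivalently that the image set $z_i \wedge \DD_{z_{i-1}} \wedge z_i$ makes sense and contains $z_{i-1}$. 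Comparability and strictness follow from Propositions \ref{preorder} and \ref{order_dclass}; the absence of an intermediate $\DD$-class is not strictly required (an intermediate $\DD$-class could exist only if some element strictly between $z_{i-1}$ and $z_i$ existed in it, contradicting $z_{i-1} \lhd z_i$ via Proposition \ref{up} applied to lift), but one should note that even if $y \lhd z$ in the recursion is read as a covering in $S$, Proposition \ref{up} guarantees that for $w \in \DD_{z_{i-1}}$ with $w \leq z_i$ we genuinely get such elements, so the recursion collects exactly the right sets. Once this bridge between the element-level covering $\lhd$ and the coset/image-set description is made explicit, the finiteness of $\mathbf{S}$ closes the argument. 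The dual statement for skew filters follows by the usual $\wedge$–$\vee$ and $\leq$–$\geq$ duality, replacing $z \wedge \DD_y \wedge z$ by $z \vee \DD_y \vee z$ and $y \lhd z$ by $z \lhd y$.
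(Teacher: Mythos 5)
Your proof is correct and follows essentially the same route as the paper's: the easy inclusion by transitivity of $\leq$, and the reverse inclusion by running down a covering chain from $x$ to $a$ (guaranteed by finiteness) and using the image-set description $z\wedge \DD_y\wedge z=\set{w\in\DD_y : w<z}$ from Theorem \ref{coset_part} at each step. Your version merely makes explicit the induction that the paper compresses into ``and so on,'' and correctly observes that no hypothesis on intermediate $\DD$-classes is needed.
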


\begin{proof}
Let $a\in x^{\downarrow ^*}$. If $a=x$ then proposition \ref{antecip2} $(i)$ implies that $a\in x^{\downarrow ^*}_{0}$.
If not, then $a\in S$ such that $a< x$. 
In the case that $a\lhd x$, Theorem \ref{coset_part} and proposition \ref{ideal_coset} imply that $a\in x\wedge D_{a}\wedge x\subseteq x^{\downarrow ^{*}}_{0}$.
In the case that $z$ exists such that $a\lhd z\lhd x$, considering $\DD_{a}\cup \DD_{z}$ we get $a\in (z\wedge \DD_a \wedge z)$. As $z\in x^{\downarrow ^*}_{1}$ then $(z\wedge \DD_a \wedge z) \subseteq z^{\downarrow ^*}\subseteq x^{\downarrow ^*}_{n}$; and so on.
Due to the finitude of $\mathbf S$ these iterations must finnish at some point. 
Hence, $a\in x^{\downarrow ^{*}}_{m}$, for some $m\in \N$.

Conversely, proposition \ref{antecip} $(i)$ implies that $x^{\downarrow ^{*}}_{0}\subseteq x^{\downarrow ^{*}}$. Fix $z\in x^{\downarrow ^*}_{n}$ and let $a,y\in S$ such that $y\lhd z$ and $a\in z\wedge \DD_{y}\wedge z$, that is, $a\leq z$. As $z\leq x$ then $a\leq x$ so that $y\in x^{\downarrow ^*}$.  
\end{proof}

\begin{corollary}\label{index_ideal2}
Let $\mathbf{S}$ be a finite skew lattice and $x\in S$.
If $x^{\downarrow ^*} $ is finite, then $$\left| x^{\downarrow ^*} \right | \leq 1 + [\DD_{x_{0}}:\DD_{x_{1}}] + [\DD_{x_{0}}:\DD_{x_{1}}][\DD_{x_{1}}:\DD_{x_{2}}] + \dots + [\DD_{x_{0}}:\DD_{x_{1}}]\dots [\DD_{x_{n-1}}:\DD_{x_{n}}],$$
for $x=x_{0}\lhd x_{1}\lhd \dots \lhd x_{n-1}\lhd x_{n}$ in $\mathbf{S}$.
Moreover, if $\mathbf{S}$ is strictly categorical, then $$\left| x^{\downarrow ^* }\right| \leq 1+\sum_{k=0}^{n}[\DD_{x_{0}}:\DD_{x_{k}}].$$
\end{corollary}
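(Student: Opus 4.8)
The plan is to bound $|x^{\downarrow^*}|$ by summing the contributions of the sets $x^{\downarrow^*}_n$ from Proposition~\ref{sideal_{rec}}, since $x^{\downarrow^*}=\bigcup_n x^{\downarrow^*}_n$ and this union, being over a finite skew lattice, runs over finitely many nonempty levels; picking a maximal chain $x=x_0\lhd x_1\lhd\dots\lhd x_n$ in $x^{\downarrow^*}$ records how deep the recursion goes. First I would observe that $x^{\downarrow^*}_0=\{x\}$ contributes $1$. Then, for the inductive step, I would estimate $|x^{\downarrow^*}_{n+1}|$: by definition it is a union of sets of the form $z\wedge\DD_y\wedge z$ where $z\in x^{\downarrow^*}_n$ and $y\lhd z$; each such $z\wedge\DD_y\wedge z$ is the image set of $z$ in the $\DD$-class immediately below it, which by Theorem~\ref{coset_part} is a transversal of the cosets of $\DD_z$ in $\DD_y$, hence has cardinality exactly $[\DD_z:\DD_y]$. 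Grouping the elements of $x^{\downarrow^*}_n$ by their $\DD$-class (the classes appearing at level $n$ are, by the recursion, exactly the $\DD_{x_k}$ with $k=n$ along the various maximal chains), and using that all elements of a fixed $\DD$-class $\DD_{x_{k}}$ have image sets of the same size $[\DD_{x_{k}}:\DD_{x_{k+1}}]$ in any fixed lower class, one multiplies through to get the product bounds $[\DD_{x_0}:\DD_{x_1}]\cdots[\DD_{x_{k-1}}:\DD_{x_k}]$ as upper bounds for $|x^{\downarrow^*}_k|$. Telescoping over $k=0,\dots,n$ yields the first displayed inequality.

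For the strictly categorical refinement, I would invoke Theorem~\ref{count_cat} (the multiplicativity of indices along skew chains): in that setting $[\DD_{x_0}:\DD_{x_1}]\cdots[\DD_{x_{k-1}}:\DD_{x_k}]=[\DD_{x_0}:\DD_{x_k}]$, so each product term in the first bound collapses to a single index, and the sum $1+\sum_{k=0}^n[\DD_{x_0}:\DD_{x_k}]$ drops out. (One should also note that strict categoricity together with finiteness of $\DD_{x_0}$ forces all the intermediate $\DD_{x_k}$ to be finite, so the indices are genuine natural numbers; this is part of Theorem~\ref{count_cat}.) I would take a moment to reconcile indexing: the hypothesis chain has $n+1$ links $x_0,\dots,x_n$ so the levels run $0$ through $n$ and the first sum has $n+1$ summands plus the leading $1$, matching the display.

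The main obstacle, I expect, is making the grouping-by-$\DD$-class argument precise: a priori the same element of $S$ could be reached at several different levels $x^{\downarrow^*}_n$, or through chains of differing lengths, so one has to be careful that the crude bound ``$|x^{\downarrow^*}_{n+1}|\le (\text{number of }z\text{'s at level }n)\times(\text{size of each image set})$'' does not secretly undercount by forcing a uniform lower class $\DD_{x_{n+1}}$ when different $z$'s at level $n$ may have different classes immediately below them. The clean way around this is to fix at the outset one maximal chain $x=x_0\lhd x_1\lhd\dots\lhd x_n$ realizing the greatest depth, to bound $|x^{\downarrow^*}_k|$ by the product of the indices along \emph{that} chain using Proposition~\ref{cs_normal}-style coset uniformity (all image sets of elements of a common $\DD$-class into a common lower $\DD$-class have equal cardinality, by Theorem~\ref{coset_part}), and to absorb the ``other'' branches into the same bound by monotonicity of indices under refinement of chains (Theorem~\ref{count_cat}'s inequality $[C:A]\le[C:B][B:A]$). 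Since the statement is an inequality, such over-counting is harmless, and the argument reduces to bookkeeping once the coset-transversal cardinality fact is in hand.
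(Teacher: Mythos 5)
Your route is the same as the paper's: its printed proof of this corollary is a single sentence citing Proposition \ref{sideal_{rec}} and Theorem \ref{count_cat}, and your level-by-level count --- $x^{\downarrow ^{*}}_{0}$ contributes $1$, each image set $z\wedge \DD_{y}\wedge z$ is a coset transversal whose cardinality is the corresponding index by Theorem \ref{coset_part}, the products telescope along the chain, and Theorem \ref{count_cat} collapses each product to a single index in the strictly categorical case --- is precisely the bookkeeping the paper leaves implicit. That part of your argument is sound, as is your remark that strict categoricity plus finiteness keeps the intermediate classes finite and the indexing consistent.

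The difficulty you flag at the end, however, is genuine, and your proposed patch does not close it. The inequality $[C:A]\leq [C:B][B:A]$ of Theorem \ref{count_cat} governs refinement of a single chain $A>B>C$; it says nothing about parallel branches, i.e.\ the case where some $z\in x^{\downarrow ^{*}}_{k}$ has covers lying in two incomparable $\DD$-classes, in which case $x^{\downarrow ^{*}}_{k+1}$ contains one coset transversal for each such class and the single-chain product bound undercounts. Already for the lattice $\set{1>a,b>0}$ (all $\DD$-classes singletons, all indices equal to $1$) one has $|1^{\downarrow ^{*}}|=4$ while the displayed bound along any maximal chain is $3$. So your argument, exactly like the corollary as displayed, is valid only when the $\DD$-classes met by $x^{\downarrow ^{*}}$ form a chain; this is a defect of the statement that the paper's one-line proof also glosses over, but you should impose that chain hypothesis explicitly (or sum over all branches) rather than appeal to index monotonicity, which does not apply to incomparable covering classes.
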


\begin{proof}
A direct consequence of propositions \ref{sideal_{rec}} and \ref{count_cat}.
\end{proof}

\section*{Agknowledgements}
The author would like to thank to K. Cvetko-Vah for her guidance and clarification, to Ganna Kudryavtseva for her motivation and involvement in this research, to J. Leech for the ideas and suggestions, and to M.J. Gouveia for the opportune discussions on the topic.
Finally, the author thanks the support of Funda\c c\~ao para a Ci\^ encia e Tecnologia with the reference SFRH/BD/36694/2007.

\bibliographystyle{plain}
\bibliography{Bib_20120620}

\end{document}